\newtheorem{Theorem}{Theorem}[section]
\newtheorem{Proposition}[Theorem]{Proposition}
\newtheorem{Lemma}[Theorem]{Lemma}
\newtheorem{Corollary}[Theorem]{Corollary}
\newtheorem{Problem}[Theorem]{Problem}
\newtheorem{Definition-Proposition}[Theorem]{Definition-Proposition}
\newtheorem{Main Conjecture}[Theorem]{Main Conjecture}
\newtheorem{Conjecture}[Theorem]{Conjecture}
\newtheorem{Definition}[Theorem]{Definition}
\newtheorem{Remark}[Theorem]{Remark}
\theoremstyle{remark}
\newtheorem{Example}[Theorem]{Example}
\newcommand{\excise}[1]{}
\theoremstyle{plain}
\newcommand{\cellsize}{11}
\newlength{\cellsz} \setlength{\cellsz}{\cellsize\unitlength}
\newsavebox{\cell}
\sbox{\cell}{\begin{picture}(\cellsize,\cellsize)
\put(0,0){\line(1,0){\cellsize}}
\put(0,0){\line(0,1){\cellsize}}
\put(\cellsize,0){\line(0,1){\cellsize}}
\put(0,\cellsize){\line(1,0){\cellsize}}
\end{picture}}
\newcommand\cellify[1]{\def\thearg{#1}\def\nothing{}%
\ifx\thearg\nothing
\vrule width0pt height\cellsz depth0pt\else
\hbox to 0pt{\usebox{\cell} \hss}\fi%
\vbox to \cellsz{
\vss
\hbox to \cellsz{\hss$#1$\hss}
\vss}}
\newcommand\tableau[1]{\vtop{\let\\\cr
\baselineskip -16000pt \lineskiplimit 16000pt \lineskip 0pt
\ialign{&\cellify{##}\cr#1\crcr}}}
\newcommand{\kellsize}{24}
\newlength{\kellsz} \setlength{\kellsz}{\kellsize\unitlength}
\newsavebox{\kell}
\sbox{\kell}{\begin{picture}(\kellsize,\kellsize)
\put(0,0){\line(1,0){\kellsize}}
\put(0,0){\line(0,1){\kellsize}}
\put(\kellsize,0){\line(0,1){\kellsize}}
\put(0,\kellsize){\line(1,0){\kellsize}}
\end{picture}}
\newcommand\kellify[1]{\def\thearg{#1}\def\nothing{}%
\ifx\thearg\nothing
\vrule width0pt height\kellsz depth0pt\else
\hbox to 0pt{\usebox{\kell} \hss}\fi%
\vbox to \kellsz{
\vss
\hbox to \kellsz{\hss$#1$\hss}
\vss}}
\newcommand\ktableau[1]{\vtop{\let\\\cr
\baselineskip -16000pt \lineskiplimit 16000pt \lineskip 0pt
\ialign{&\kellify{##}\cr#1\crcr}}}
\newcommand{\sellsize}{36}
\newlength{\sellsz} \setlength{\sellsz}{\sellsize\unitlength}
\newsavebox{\sell}
\sbox{\sell}{\begin{picture}(\sellsize,20)
\put(0,0){\line(1,0){\sellsize}}
\put(0,0){\line(0,1){\sellsize}}
\put(\sellsize,0){\line(0,1){\sellsize}}
\put(0,\sellsize){\line(1,0){\sellsize}}
\end{picture}}
\newcommand\sellify[1]{\def\thearg{#1}\def\nothing{}%
\ifx\thearg\nothing
\vrule width0pt height\sellsz depth0pt\else
\hbox to 0pt{\usebox{\sell} \hss}\fi%
\vbox to \sellsz{
\vss
\hbox to \sellsz{\hss$#1$\hss}
\vss}}
\newcommand\stableau[1]{\vtop{\let\\\cr
\baselineskip -16000pt \lineskiplimit 16000pt \lineskip 0pt
\ialign{&\sellify{##}\cr#1\crcr}}}
\begin{document}
\pagestyle{plain}

\mbox{}
\title{Newton Polytopes in algebraic combinatorics}
\author{Cara Monical}
\author{Neriman Tokcan}
\author{Alexander Yong}
\address{Dept.~of Mathematics, U.~Illinois at
Urbana-Champaign, Urbana, IL 61801, USA}
\email{cmonica2@illinois.edu, tokcan2@illinois.edu, ayong@uiuc.edu}

\date{\today}

\maketitle

\begin{abstract}
A polynomial has \emph{saturated Newton polytope} (SNP) 
if every lattice point of the convex hull of its exponent vectors
corresponds to a monomial. We compile instances of SNP 
in algebraic combinatorics (some with proofs, others conjecturally): skew Schur polynomials; 
symmetric polynomials associated to reduced words, Redfield--P\'olya theory, Witt vectors, and totally nonnegative
matrices; resultants;
discriminants (up to quartics); Macdonald polynomials; key polynomials; 
Demazure atoms; Schubert polynomials; and Grothendieck polynomials, among others.

Our principal construction is 
the \emph{Schubitope}. For any subset of $[n]^2$,
we describe it by linear inequalities. This generalized permutahedron conjecturally
has positive Ehrhart polynomial. We conjecture it describes the Newton polytope  of Schubert and key polynomials.
We also define \emph{dominance order} on permutations and study its poset-theoretic properties.
\end{abstract}

\section{Introduction}
The {\bf Newton polytope} of 
a polynomial $f=\sum_{\alpha\in {\mathbb Z}_{\geq 0}^n}c_{\alpha} x^{\alpha}\in {{\mathbb C}}[x_1,\ldots,x_n]$ is 
the convex hull of its exponent vectors, i.e., 
\[{\sf Newton}(f)={\rm conv}(\{\alpha:c_{\alpha}\neq 0\})\subseteq {\mathbb R}^n.\]
\begin{Definition}
$f$ has {\bf saturated Newton polytope} (SNP) if $c_{\alpha}\neq 0$ whenever $\alpha\in {\sf Newton}(f)$.
\end{Definition}

\begin{Example}
$f=$ the determinant of a
generic $n\times n$ matrix. The exponent vectors
correspond to permutation matrices. ${\sf Newton}(f)$ is the 
\emph{Birkhoff polytope} of $n\times n$ doubly stochastic matrices. SNPness says there are no
additional lattice points, which is obvious here. (The Birkhoff-von Neumann theorem states all lattice points are 
vertices.) \qed
\end{Example}

Generally, polynomials are not SNP. Worse still, SNP is not preserved by basic 
polynomial operations. For example, $f=x_1^2+x_2x_3+x_2x_4+x_3x_4$ is SNP but $f^2$ is 
not (it misses $x_1 x_2 x_3 x_4$). Nevertheless, there are a number of 
families of polynomials in algebraic combinatorics where every member is 
(conjecturally) SNP. Examples motivating our investigation include:

\begin{itemize}
\item The \emph{Schur polynomials} are SNP. This rephrases
R.~Rado's theorem \cite{rado} about permutahedra and
dominance order on partitions; cf. Proposition~\ref{prop:generalthing}.
\item Classical \emph{resultants} are SNP (Theorem~\ref{prop:resultantSNP}). Their Newton polytopes were studied by I.~M.~Gelfand-M.~Kapranov-A.~Zelevinsky \cite{GKZ}. (Classical \emph{discriminants} are SNP up to quartics --- but not quintics; see Proposition~\ref{abel}.) 
\item \emph{Cycle index polynomials} from Redfield--P\'{o}lya theory (Theorem~\ref{thm:cycleindex})
\item C.~Reutenauer's symmetric polynomials linked to the 
free Lie algebra and
to Witt vectors \cite{Reutenauer} (Theorem~\ref{thm:Reutenauer})
\item J.~R.~Stembridge's symmetric polynomials associated to 
totally nonnegative matrices \cite{Stembridge}  (Theorem~\ref{thm:Stem})
\item R.~P.~Stanley's symmetric polynomials \cite{Stanley1984}, 
introduced to enumerate reduced words of permutations
(Theorem~\ref{thm:Stanley})
\item Any generic $(q,t)$-evaluation of a \emph{symmetric Macdonald polynomial} is SNP; see 
Theorem~\ref{prop:PisSNP} and Proposition~\ref{prop:modifiedisSNP}. 
\item The \emph{key polynomials} are  
$(\infty,\infty)$-specializations of \emph{non-symmetric Macdonald polynomials}. These also seem to be SNP. We give two  conjectural descriptions of the Newton polytopes. We determine a list of vertices
of the Newton polytopes (Theorem~\ref{cor:vertexkeys}) and conjecture this list is complete (Conjecture~\ref{conj:vertexkeys}). 
\item \emph{Schubert polynomials} (Conjecture~\ref{conj:main2}). We
conjecturally describe the Newton polytope (Conjecture~\ref{conj:main1}). 
\item Inhomogeneous versions of Schuberts/keys are also conjecturally SNP (Conjectures~\ref{conj:grothendieckSNP} and~\ref{conj:mar7bhh}).
\end{itemize}

The core part of our study concerns the Schubert and key polynomials. We
conjecture a  description of their 
Newton polytopes in terms of a new family
of polytopes.

\begin{wrapfigure}{l}{0.12\textwidth}
\begin{tikzpicture}[scale=0.5]
\draw (0,0) rectangle (4,4);

\draw[fill=gray!30] (2,3) rectangle (3,4);
\draw[fill=gray!30] (1,2) rectangle (2,3);
\draw[fill=gray!30] (0,1) rectangle (1,2);
\draw[fill=gray!30] (1,1) rectangle (2,2);
\draw[fill=gray!30] (3,1) rectangle (4,2);
\draw[fill=gray!30] (2,0) rectangle (3,1);
\end{tikzpicture}
\end{wrapfigure}

A {\bf diagram} ${\sf D}$ is a subset boxes of an $n \times n$ grid. 
Fix $S\subseteq [n]:=\{1,2,\ldots,n\}$ and a column $c \in [n]$. Let ${\tt word}_{c,S}({\sf D})$ be formed by reading $c$ from top to bottom and recording 
\begin{itemize} 
\item $($ if $(r,c) \notin {\sf D}$ and $r \in S$,  
\item $)$ if $(r,c) \in {\sf D}$ and $r \notin S$, and
\item $\star$ if $(r,c) \in {\sf D}$ and $r \in S$.
\end{itemize}
Let 
\[\theta_{\sf D}^c(S)= \#\text{paired $(\ )$'s in ${\tt word}_{c,S}({\sf D})$} + 
\#\text{$\star$'s in ${\tt word}_{c,S}({\sf D})$}.\]  
Set 
$\theta_{\sf D}(S) = \sum_{c \in [n]} \theta_{\sf D}^c(S)$. For instance,
$\theta_{\sf D}(\{ 2,4 \}) = 4$ above.
Define the {\bf Schubitope} as  
\[{\mathcal S}_{\sf D}=\left\{(\alpha_1,\ldots,\alpha_n)\in {\mathbb R}_{\geq 0}^n:
\sum_{i=1}^n \alpha_i=\#{\sf D} \text{\ and \ }
\sum_{i\in S}\alpha_i \leq \theta_{\sf D}(S) \text{\ for all $S \subset [n]$}\right\}.\]

\begin{wrapfigure}{l}{0.46\textwidth}
\begin{picture}(200,200)
\put(0,0){\includegraphics[height=2.9in]{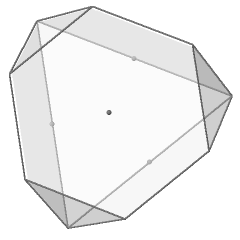}}
\put(35,-2){$(3,1,0,0)$}
\put(105,4){$(2,0,2,0)$}
\put(160,65){$(2,2,0,0)$}
\put(170,115){$(1,1,1,1)$}
\put(145,175){$(3,0,1,0)$}
\put(55,210){$(2,1,1,0)$}
\put(-15,190){$(1,0,2,1)$}
\put(-25,130){$(1,2,1,0)$}
\put(-15,35){$(1,1,2,0)$}
\put(70,95){$(1,2,0,1)$}
\put(-10,95){\color{gray}$(2,0,1,1)$}
\put(80,65){\color{gray}$(2,1,0,1)$}
\put(65,155){\color{gray}$(3,0,0,1)$}
\end{picture}
\hspace{-.2in}
\end{wrapfigure}
\noindent
Fix a partition $\lambda=(\lambda_1,\lambda_2,\ldots,\lambda_n)$.
The {\bf $\lambda$-permutahedron}, denoted ${\mathcal P}_{\lambda}$, is the convex hull of the 
$S_n$-orbit
of $\lambda$ in ${\mathbb R}^n$. The Schubitope is a generalization of the
permutahedron (Proposition~\ref{thm:first}). We conjecture that the Schubitope for a \emph{skyline diagram} and
for a \emph{Rothe diagram} respectively are the Newton polytopes of a key and Schubert polynomial.
The figure to the left depicts ${\mathcal S}_{{\sf D}_{21543}}$, which is a three-dimensional convex polytope in ${\mathbb R}^4$.
Conjecture~\ref{conj:ehrhart} asserts that \emph{Ehrhart polynomials} of Schubitopes
${\mathcal S}_{{\sf D}_w}$ have positive coefficients; 
cf.~\cite[Conjecture~1.2]{Castillo.Liu}.

A cornerstone of the theory of symmetric polynomials is the
combinatorics of \emph{Littlewood-Richardson coefficients}. 
An important special case of these numbers are the
\emph{Kostka coefficients} ${\sf K}_{\lambda,\mu}$. 
The nonzeroness of ${\sf K}_{\lambda,\mu}$ is governed by \emph{dominance order} which is defined by the linear
inequalities (\ref{eqn:domorderineq}). Alternatively, Rado's theorem \cite[Theorem~1]{rado}
states this order characterizes when ${\mathcal P}_{\mu}\subseteq {\mathcal P}_{\lambda}$. These two viewpoints on dominance order are connected since 
${\mathcal P}_{\lambda}$ is the Newton polytope
of the Schur polynomial $s_{\lambda}(x_1,x_2,\ldots,x_n)$.

For Schubert polynomials, there is no analogous Littlewood-Richardson
rule. However, with a parallel in mind,  we propose a ``dominance order for permutations'' \emph{via} Newton polytopes. The inequalities of the Schubitope generalize (\ref{eqn:domorderineq}); see Proposition~\ref{thm:first}.

\subsection*{Organization}
Section~\ref{sec:symmetricfunctions} develops and applies basic results about SNP symmetric polynomials. 
Section~\ref{sec:mac} turns to flavors of Macdonald polynomials and their specializations, including the
key polynomials and \emph{Demazure atoms}. 
Section~\ref{sec:quasi} concerns quasisymmetric functions.
\emph{Monomial quasisymmetric} and \emph{Gessel's fundamental quasisymmetric polynomials} are not SNP, but have
equal Newton polytopes. The \emph{quasisymmetric Schur polynomials} \cite{quasischur} are also not SNP, which
demonstrates a qualitative difference with Schur polynomials.
Section~\ref{sec:schub} discusses Schubert polynomials and a number of
variations. We define dominance order
for permutations and study its poset-theoretic
properties. We connect the Schubitope to work of A.~Kohnert
\cite{Kohnert} and explain a salient contrast (Remark~\ref{remark:diffwithkohnert}). 

\section{Symmetric functions}
\label{sec:symmetricfunctions}

\subsection{Preliminaries}
The {\bf monomial symmetric polynomial} for a partition $\lambda$ is
\[m_{\lambda}=\sum_{\alpha} x_1^{\alpha_1}x_2^{\alpha_2}\cdots x_n^{\alpha_n}\]
where the sum is over distinct rearrangements of $\lambda$. The set
$\{m_{\lambda}\}_{\ell(\lambda)\leq n}$ forms a ${\mathbb Z}$-basis of ${\sf Sym}_n$, the ring of symmetric
polynomials in $x_1,x_2,\ldots,x_n$ (here $\ell(\lambda)$ is the number of nonzero parts of $\lambda$).

Identify a partition $\lambda$ with its Young diagram (in English notation). A {\bf semistandard Young tableau} $T$
is a filling of $\lambda$ with entries from ${\mathbb Z}_{>0}$ that is weakly increasing along rows and strictly increasing down
columns. The {\bf content} of $T$ is $\mu=(\mu_1,\mu_2,\ldots,\mu_n)$ where $\mu_i$ is the number of $i$'s appearing
in $T$. The {\bf Schur polynomial} is
\begin{equation}
\label{eqn:schurdef}
s_{\lambda}=\sum_{\mu}{\sf K}_{\lambda,\mu}m_{\mu}
\end{equation}
where ${\sf K}_{\lambda,\mu}$ is the number of semistandard Young tableaux of shape $\lambda$ and content $\mu$.

Let ${\sf Par}(d)=\{\lambda:\lambda\vdash d\}$ be the set of partitions of size $d$. {\bf Dominance order} $\leq_D$ on 
${\sf Par}(d)$ is defined by
\begin{equation}
\label{eqn:domorderineq}
\mu\leq_{D} \lambda \text{\ \ \ if \ \ \ $\sum_{i=1}^k\mu_i\leq \sum_{i=1}^k\lambda_i$ \ \ \ for all $k\geq 1$.}
\end{equation}
Recall this result about tableaux (see e.g., \cite[Proposition~7.10.5 and Exercise~7.12]{Stanley}):
\begin{equation}
\label{eqn:kostkadominance}
\text{${\sf K}_{\lambda,\mu} \neq 0 \iff \mu \leq_D \lambda$.} 
\end{equation}

Since ${\sf K}_{\lambda,\lambda}=1$, it follows 
from (\ref{eqn:schurdef}) and (\ref{eqn:kostkadominance}) combined that $\{s_{\lambda}\}_{\ell(\lambda) \leq n}$ also forms a basis of ${\sf Sym}_n$.

Setting $x_{n+1}=0$ defines a surjective homomorphism ${\sf Sym}_{n+1}\twoheadrightarrow 
{\sf Sym}_n$ for each $n\geq 0$.
Let ${\sf Sym}$ denote $\underset{n}{\varprojlim}\ {\sf Sym}_n$, the ring of symmetric \emph{functions} in $x_1,x_2,\ldots$. We refer the reader to \cite[Chapter~7]{Stanley} for additional background.

\subsection{Basic facts about SNP}
Given $f\in {\sf Sym}$, let $f(x_1,\ldots,x_n)\in {\sf Sym}_n$ 
be the specialization that sets $x_{i}=0$ for $i\geq n+1$. Whether $f(x_1,\ldots,x_n)$ 
is SNP depends on $n$ (e.g., $f=\sum_i x_i^2$). 

\begin{Definition}
$f\in {\sf Sym}$ is SNP if $f(x_1,\ldots,x_m)$ is SNP for all $m\geq 1$.
\end{Definition}

\begin{Proposition}[Stability of SNP]
\label{prop:stability}
Suppose $f\in {\sf Sym}$ has finite degree. Then 
$f$ is SNP if there exists $m\geq \deg(f)$ such that
$f(x_1,\ldots,x_m)$ is SNP.
\end{Proposition}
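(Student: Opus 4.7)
The plan is to handle the ranges $n \leq m$ and $n > m$ separately, each reduced to a single structural fact: for any polynomial $h \in \mathbb{C}[x_1,\ldots,x_k]$ with nonnegative exponents, zero-padding identifies ${\sf Newton}(h(x_1,\ldots,x_{k-1},0))$ with the face ${\sf Newton}(h) \cap \{\alpha_k = 0\}$. The reason is elementary: in a convex combination of vectors in $\mathbb{R}^k_{\geq 0}$, the $k$-th coordinate vanishes iff every contributing vector has $k$-th coordinate zero, so this face is the convex hull of those exponent vectors $\alpha$ of $h$ with $\alpha_k=0$, which are exactly the exponent vectors of $h(x_1,\ldots,x_{k-1},0)$.

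For $n \leq m$, the above identification shows that setting a variable to zero preserves SNP: any lattice point $\beta$ of ${\sf Newton}(h|_{x_k=0})$ corresponds (via zero-padding) to a lattice point of the face, hence by SNP of $h$ to a monomial $c_{(\beta,0)}x^\beta$ that survives the substitution. Iterating this from $m$ down to $n$ yields SNP of $f(x_1,\ldots,x_n)$.

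For $n > m$, let $\alpha$ be a lattice point of ${\sf Newton}(f(x_1,\ldots,x_n))$. Since $f$ is symmetric, the Newton polytope is $S_n$-invariant, so I may assume $\alpha$ is weakly decreasing, i.e., a partition. Writing $\alpha$ as a convex combination of exponent vectors of $f(x_1,\ldots,x_n)$, each of $L^1$-norm at most $\deg(f)\leq m$, gives $|\alpha|\leq m$; thus $\alpha_{m+1}=\cdots=\alpha_n=0$. Applying the face identification to the substitutions $x_{m+1}=\cdots=x_n=0$ (which turn $f(x_1,\ldots,x_n)$ into $f(x_1,\ldots,x_m)$) shows that $(\alpha_1,\ldots,\alpha_m)$ is a lattice point of ${\sf Newton}(f(x_1,\ldots,x_m))$. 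By the SNP hypothesis at $m$, this is the exponent vector of a monomial of $f(x_1,\ldots,x_m)$, and zero-padding recovers that $\alpha$ is the exponent vector of a monomial of $f(x_1,\ldots,x_n)$.

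The main obstacle is this upward step: one must combine the $S_n$-symmetry of $f$ with the degree bound to squeeze a sorted lattice point into the first $m$ coordinates, which is precisely where the hypothesis $m \geq \deg(f)$ is essential. The face-identification lemma itself is elementary and handles the rest.
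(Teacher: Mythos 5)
Your proof is correct and follows essentially the same route as the paper: split into $n\leq m$ and $n>m$, observe that setting variables to zero passes to a coordinate face of the Newton polytope, and for $n>m$ use symmetry together with the bound $\deg(f)\leq m$ to relocate the support of a lattice point into the first $m$ coordinates before invoking SNP at level $m$. The only stylistic difference is that you isolate an explicit face-identification lemma and reuse it in both directions, whereas the paper argues the downward step via a containment of polytopes and the upward step by directly permuting a convex-combination witness; these are equivalent.
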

\begin{proof}
We first show that if $f(x_1,\ldots,x_m)$ is SNP, 
$f(x_1,\ldots,x_n)$ is SNP for any $n \leq m$.  
Suppose $\alpha \in {\tt Newton}(f(x_1,\ldots,x_n))\subseteq {\tt Newton}(f(x_1,\dots,x_m))$.  Since $f(x_1,\ldots,x_m)$ is SNP, $x^\alpha$ is a monomial of $f(x_1,\ldots,x_m)$.  However, since $\alpha \in {\tt Newton}(f(x_1,\ldots,x_n))$, $\alpha$ only uses the first $n$ positions and thus $x^\alpha$ is a monomial of 
$f(x_1,\ldots,x_n,0,\ldots,0)$.

To complete the proof, we now show if $f(x_1,\ldots,x_m)$ for $m \geq \deg(f)$ is SNP, then 
$f(x_1,\ldots,x_n)$ is SNP for all $n \geq m$. 
Suppose $\alpha \in {\tt Newton}(f(x_1,\ldots,x_n))$ and thus 
\begin{equation}
\label{eqn:feb17a}
\alpha = \sum_i c_i \beta^i 
\end{equation}
 where $x^{\beta^i}$ is a monomial of $f$.  Since $m \geq \deg(f)$, there are at most $m$ coordinates where $\alpha_j > 0$, say $j_1,\ldots,j_m$.  
Furthermore, since each $\beta^i$ is nonnegative, if $c_i > 0$, $\beta_j^i = 0$ for $j \neq j_1, \ldots, j_m$.
Choose $w \in S_n$ such that $w(j_c) = c$ for $c = 1,\ldots,m$.  Applying $w$ to (\ref{eqn:feb17a})
gives 
\[ w(\alpha) = \sum_i c_i w(\beta^i).\]
So nonzero coordinates of $w(\alpha)$ only occur in positions $1, \ldots, m$.  Since $f\in {\sf Sym}$, each $x^{w(\beta^i)}$ is a monomial of $f(x_1,\ldots,x_m)$, and so $w(\alpha) \in {\tt Newton}(f(x_1,\ldots,x_m))$.  Since $f(x_1,\ldots,x_m)$ is SNP, $[x^{w(\alpha)}]f\neq 0$.
Again, $f\in {\sf Sym}$ implies $[x^\alpha]f\neq 0$. Hence, $f(x_1,\ldots,x_n)$ is SNP.
\end{proof}

\begin{Remark}
\label{remark:quasistability}
\emph{In the proof of Proposition~\ref{prop:stability}, $w$ is chosen 
so that the nonzero components of the vectors $\alpha$
and $w(\alpha)$ are in the same relative order. Thus the result extends to the quasisymmetric case of Section \ref{sec:quasi}.}\qed
\end{Remark}

\begin{Remark}
\emph{
The stabilization constant $\deg(f)$ is tight. Let $f=s_{\lambda}-{\sf f}^{\lambda}m_{(1^{|\lambda|})}$. Here ${\sf f}^{\lambda}=[m_{(1^{|\lambda|})}]s_{\lambda}$  
(=the number of standard Young tableaux of shape $\lambda$.)
Then $f(x_1,\ldots,x_n)$ is SNP for $n<|\lambda|=\deg(f)$ but not SNP for $n\geq |\lambda|$. One can see this from the ideas in the next proposition.}
\end{Remark}

\begin{Proposition}
\label{prop:generalthing}
Suppose $f\in {\sf Sym}_n$ is homogeneous of degree $d$ such that 
\[f = \sum_{\mu\in {\sf Par}(d)} c_\mu s_\mu.\] 
Suppose there exists $\lambda$ with 
$c_{\lambda}\neq 0$
and $c_\mu \neq 0$ only if 
$\mu \leq_D \lambda$.  If $n < \ell(\lambda)$, $f = 0$.  Otherwise:
\begin{itemize}
\item[(I)] ${\sf Newton}(f) = {\mathcal P}_{\lambda}\subset {\mathbb R}^n$. 
\item[(II)] The vertices of ${\sf Newton}(f)$ are rearrangements of $\lambda$.
\item[(III)] If moreover $c_{\mu}\geq 0$ for all $\mu$, $f$ has SNP.
\end{itemize}
\end{Proposition}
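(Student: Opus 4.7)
The plan proceeds in three stages matching the three numbered conclusions, with the substantive work lying in (I).

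For the vanishing claim, the key observation is that if $\mu \leq_D \lambda$ and $\mu,\lambda \vdash d$, then $\ell(\mu) \geq \ell(\lambda)$: otherwise taking the partial sum at $k=\ell(\mu)$ yields $d = \sum_{i=1}^{\ell(\mu)} \mu_i \leq \sum_{i=1}^{\ell(\mu)} \lambda_i < d$ (since $\lambda_{\ell(\mu)+1} \geq 1$), a contradiction. Hence if $n < \ell(\lambda)$, every $\mu$ in the support of $f$ has $\ell(\mu) > n$, so $s_\mu(x_1,\ldots,x_n) = 0$ and $f = 0$.

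For (I), I would establish both inclusions. The inclusion ${\sf Newton}(f) \subseteq {\mathcal P}_\lambda$ follows by noting that the exponent vectors of $s_\mu$ are rearrangements of partitions $\nu \leq_D \mu$ by (\ref{eqn:kostkadominance}); Rado's theorem then identifies these with the lattice points of ${\mathcal P}_\mu$, so ${\sf Newton}(s_\mu) = {\mathcal P}_\mu \subseteq {\mathcal P}_\lambda$ by another application of Rado. The reverse inclusion ${\mathcal P}_\lambda \subseteq {\sf Newton}(f)$ is the crux, since I must rule out the a priori possibility that $x^\lambda$ cancels in the sum $\sum c_\mu s_\mu$. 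I would compute $[x^\lambda] s_\mu = {\sf K}_{\mu,\lambda}$, which by (\ref{eqn:kostkadominance}) is nonzero only when $\lambda \leq_D \mu$; combined with the hypothesis $\mu \leq_D \lambda$, only $\mu = \lambda$ survives, so $[x^\lambda] f = c_\lambda \neq 0$. Symmetry of $f$ then places every rearrangement of $\lambda$ in the support of $f$, and since these rearrangements are the vertices of ${\mathcal P}_\lambda$, convexity finishes the argument. Part (II) follows immediately from (I) together with the standard vertex description of the permutahedron.

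For (III), fix a lattice point $\alpha \in {\sf Newton}(f) = {\mathcal P}_\lambda$ and let $\tilde\alpha$ denote the partition obtained by sorting $\alpha$. By Rado's theorem, $\tilde\alpha \leq_D \lambda$, so ${\sf K}_{\lambda,\tilde\alpha} > 0$ and hence $[x^\alpha] s_\lambda > 0$; every other term contributes $[x^\alpha] s_\mu = {\sf K}_{\mu,\tilde\alpha} \geq 0$. With all $c_\mu \geq 0$ and $c_\lambda \neq 0$, no cancellation is possible, giving $[x^\alpha] f \geq c_\lambda {\sf K}_{\lambda,\tilde\alpha} > 0$. The main obstacle throughout is the no-cancellation issue at the vertices in (I); the asymmetric dominance hypothesis on the support of $f$ is precisely what makes this resolve cleanly, and part (III) then reduces to an application of (\ref{eqn:kostkadominance}) together with sign control.
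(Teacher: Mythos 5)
Your proof is correct and follows essentially the same route as the paper: reduce to the monomial expansion via Kostka numbers, apply Rado's theorem to both inclusions in (I), read (II) off the standard permutahedron vertex description, and use Schur/Kostka positivity for (III). You are in fact slightly more careful than the paper on the one delicate point in (I) --- explicitly verifying $[x^\lambda]f = c_\lambda \neq 0$ so that the vertex $\lambda$ is genuinely attained --- a step the paper's chain of equalities glosses over.
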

\begin{proof} 
If $\mu \leq_D \lambda$, $\ell(\mu) \geq \ell(\lambda)$. Thus if $n < 
\ell(\lambda)$, 
$s_\mu(x_1,\ldots,x_n) \equiv 0$ for all $\mu$ such that $c_\mu \neq 0$.
Otherwise, suppose $n \geq \ell(\lambda)$.

(I): Since $\displaystyle f = \sum_{\mu \leq_D \lambda} c_\mu s_\mu $, 
by (\ref{eqn:kostkadominance}) we have
\[f=\sum_{\mu\leq_D\lambda} d_{\mu} m_{\mu}.\]

Clearly, 
\begin{equation}
\label{eqn:newtonmonomial}
{\sf Newton}(m_\mu(x_1,\ldots,x_n)) = {\mathcal P}_{\mu}\subset{\mathbb R}^n
\end{equation}
(by the definitions of both). Also, 
\[{\sf Newton}(f+g)={\sf conv}({\sf Newton}(f)\cup 
{\sf Newton}(g)).\]
Hence,
\[ {\sf Newton}(f) = {\rm conv}\left(\bigcup_{\mu \leq_D \lambda} {\sf Newton}(m_\mu)\right) = {\rm conv}\left(\bigcup_{\mu \leq_D \lambda} {\mathcal P}_{\mu}\right). \]
R.~Rado's theorem \cite[Theorem~1]{rado} states:
\begin{equation}
\text{${\mathcal P}_{\mu} \subseteq {\mathcal P}_{\lambda}\iff\mu \leq_D \lambda$.}
\label{eqn:containDominance}
\end{equation}
Now ${\sf Newton}(f)  =  {\mathcal P}_{\lambda}$
holds by (\ref{eqn:containDominance}), proving (I).

(II): In view of (I), it suffices to know this claim for ${\mathcal 
P}_{\lambda}$. This is well-known, but we include a proof
for completeness. 

Since ${\mathcal P}_{\lambda}$ is the convex hull of the $S_n$-orbit of $\lambda$, any vertex of ${\mathcal P}_{\lambda}$ is a rearrangement of $\lambda$.
It remains to show that every such rearrangement $\beta$ is in fact a vertex. Thus it suffices to show
there is no nontrivial convex combination
\begin{equation}
\label{eqn:convecombfeb16}
\beta = \sum_{\gamma} c_\gamma \gamma,
\end{equation}
where the sum is over distinct rearrangements $\gamma \neq \beta$ of $\lambda$.

Let 
$\lambda = (\Lambda_1^{k_1}\cdots \Lambda_m^{k_m})$
with $\Lambda_1 > \Lambda_2 > \ldots > \Lambda_m$.
Since $\beta$ is a rearrangement of $\lambda$, let $i_1^1, \ldots, i_{k_1}^1$ be the positions 
in $\beta$ of the $k_1$ parts of size $\Lambda_1$.  
Since $\gamma_{i_j^1}\leq \Lambda_1$ for all $\gamma$ we have that $c_{\gamma} = 0$ 
whenever  $\gamma$ satisfies $\gamma_{i_j^1}\neq \Lambda_1$ for any $1\leq j\leq k_1$.

Let $i_1^2, \ldots, i_{k_2}^2$ be the positions in $\beta$ 
of the $k_2$ parts of size $\Lambda_2$. Similarly, $c_{\gamma}=0$ 
  whenever  $\gamma$ satisfies $\gamma_{i_j^2}\neq \Lambda_2$ for any $1\leq j\leq k_2$.
Continuing, we see that $c_{\gamma}= 0$ for all $\gamma \neq \beta$. That 
is, there is no convex combination (\ref{eqn:convecombfeb16}), as desired.

(III): 
Suppose $\alpha$ is a lattice point in ${\sf Newton}(f)  = {\mathcal P}_{\lambda}\subset {\mathbb R}^{n}.$  
Let $\lambda(\alpha)$ be the rearrangement of $\alpha$ into a partition. 
By symmetry, 
${\mathcal P}_{\lambda(\alpha)} \subseteq {\mathcal 
P}_{\lambda}$.
Then by (\ref{eqn:containDominance}), $\lambda(\alpha) \leq_D \lambda$ 
and so by (\ref{eqn:kostkadominance}), ${\sf K}_{\lambda,\lambda(\alpha)} \neq 0$.  
Since 
$x^\alpha$ appears in $m_{\lambda(\alpha)}(x_1,\ldots,x_n)$, $x^\alpha$ appears in $f(x_1,\ldots,x_n)$ (here we are using the Schur positivity of $f$ and the fact $\ell(\lambda(\alpha))\leq n$).  
Thus $f$ is SNP.
\end{proof}

\begin{Example}[Schur positivity does not imply SNP]\label{exa:SchurposdotimplySNP}
Let
\[f=s_{(8,2,2)}+ s_{(6,6)}.\]
It is enough to show $f(x_1,x_2,x_3)$ is not SNP.
Now, $m_{(8,2,2)}(x_1,x_2,x_3)$ and $m_{(6,6)}(x_1,x_2,x_3)$ 
appear in the monomial expansion of $f(x_1,x_2,x_3)$. 
However, $m_{(7,4,1)}(x_1,x_2,x_3)$ is not in $f(x_1,x_2,x_3)$
since $(7,4,1)$ is not $\leq_D$-comparable with 
$(8,2,2)$ nor $(6,6,0)$. Yet, 
$(7,4,1)=\frac{1}{2}(8,2,2)+\frac{1}{2}(6,6,0)\in {\sf Newton}(f(x_1,x_2,x_3))$. 
Hence $f$ is not SNP.
\qed
\end{Example}

\begin{Example}
The Schur positivity assumption in Proposition~\ref{prop:generalthing}(III) 
is necessary: 
\[f=s_{(3,1)}(x_1,x_2)-s_{(2,2)}(x_1,x_2)=x_1^3x_2+x_1x_2^3\]
is not SNP. \qed
\end{Example}

\begin{Example}
$f\in {\sf Sym}$ can be SNP
without a unique $\leq_D$-maximal term. For example,
\[f=s_{(2,2,2)}+s_{(3,1,1,1)}\] 
is SNP but $(2,2,2)$
and $(3,1,1,1)$ are $\leq_D$-incomparable. An instance of this
from ``nature'' is found in Example~\ref{exa:kron}.\qed
\end{Example}

\begin{Proposition}[Products of Schur polynomials are SNP]
\label{prop:productSchur}
$s_{\lambda^{(1)}}s_{\lambda^{(2)}}\cdots s_{\lambda^{(N)}}\in {\sf Sym}$ is SNP for 
any partitions $\lambda^{(1)},\ldots,\lambda^{(N)}$.
\end{Proposition}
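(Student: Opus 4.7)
The plan is to apply Proposition~\ref{prop:generalthing}(III) to $f = s_{\lambda^{(1)}} s_{\lambda^{(2)}} \cdots s_{\lambda^{(N)}}$. Two inputs are needed: (a) Schur positivity, which is immediate from the Littlewood--Richardson rule; and (b) a unique $\leq_D$-maximal term in the Schur expansion of $f$. Once (a) and (b) are established, Proposition~\ref{prop:generalthing}(III) applied in any $n \geq \ell(\lambda^{(1)} + \cdots + \lambda^{(N)})$ variables yields SNPness of $f(x_1,\ldots,x_n)$; Proposition~\ref{prop:stability} then promotes this to SNPness of $f$ as an element of ${\sf Sym}$. For $n < \ell(\lambda^{(1)}+\cdots+\lambda^{(N)}) = \max_i \ell(\lambda^{(i)})$, some factor $s_{\lambda^{(i)}}(x_1,\ldots,x_n)$ already vanishes and the product is trivially SNP.

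To obtain (b), the core claim is the following, which we establish by induction on $N$: if $\prod_{i=1}^{N} s_{\lambda^{(i)}} = \sum_\nu d_\nu s_\nu$, then $d_\nu \neq 0$ implies $\nu \leq_D \lambda^{(1)} + \cdots + \lambda^{(N)}$, and $d_{\lambda^{(1)} + \cdots + \lambda^{(N)}} = 1$. The base case $N=1$ is trivial. The inductive step reduces to the statement for a single product: writing $s_\mu s_\sigma = \sum_\nu c_{\mu\sigma}^\nu s_\nu$, one has $c_{\mu\sigma}^\nu \neq 0 \Rightarrow \nu \leq_D \mu + \sigma$, and $c_{\mu\sigma}^{\mu+\sigma} = 1$. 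Both facts are classical consequences of the Littlewood--Richardson rule: in any LR tableau of shape $\nu/\mu$ with content $\sigma$, the Yamanouchi (reverse-lattice) condition combined with semistandardness forces entries in the first $k$ rows to lie in $\{1,2,\ldots,k\}$, so $\sum_{i=1}^k (\nu_i - \mu_i) \leq \sum_{i=1}^k \sigma_i$, i.e.\ $\nu \leq_D \mu+\sigma$; and the unique filling realizing $\nu = \mu+\sigma$ places $i$'s in row $i$ of $\nu/\mu$ and nowhere else.

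Combining the induction with Proposition~\ref{prop:generalthing}(III), all hypotheses are met, so $f(x_1,\ldots,x_n)$ is SNP whenever $n \geq \ell(\lambda^{(1)} + \cdots + \lambda^{(N)})$. Applying Proposition~\ref{prop:stability} (taking $m$ larger than $\deg(f) = \sum_i |\lambda^{(i)}|$) then gives SNPness of $f$ in ${\sf Sym}$.

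The only substantive step is the dominance bound on Littlewood--Richardson coefficients in part (b); everything else is bookkeeping and an appeal to the prior propositions. This bound is well known, so the proof is essentially a reduction to Proposition~\ref{prop:generalthing}(III), and no new obstacle is expected.
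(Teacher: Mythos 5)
Your proof is correct and follows essentially the same strategy as the paper: establish that $s_{\lambda^{(1)}+\cdots+\lambda^{(N)}}$ is the unique $\leq_D$-maximal term with positive coefficient in the Schur expansion, then invoke Proposition~\ref{prop:generalthing}(III). The only difference is in proving the key dominance bound ${\sf LR}_{\mu,\sigma}^{\nu}\neq 0\Rightarrow \nu\leq_D\mu+\sigma$: the paper's primary argument shows $s_{\lambda+\mu}$ is the unique $\leq_D$-maximal Schur term in $s_{\lambda}e_{\mu'}$ via the Pieri rule and induction on the number of nonzero parts of $\mu$, whereas you prove the bound directly from the Littlewood--Richardson rule by observing that lattice-word plus column-strictness forces entry $j$ into rows $\geq j$; the paper itself notes your route as an equally valid alternative. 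Your explicit appeal to Proposition~\ref{prop:stability} at the end is a minor bit of extra care that the paper leaves implicit.
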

\begin{proof}
We have
\[s_{\lambda}s_{\mu}=\sum_{\nu} {\sf LR}_{\lambda,\mu}^{\nu}s_{\nu}\in {\sf Sym},\]
where ${\sf LR}_{\lambda,\mu}^{\nu}\in {\mathbb Z}_{\geq 0}$ is the {\bf Littlewood-Richardson coefficient}.
By homogeneity, clearly ${\sf LR}_{\lambda,\mu}^{\nu}=0$ unless $|\nu|=|\lambda|+|\mu|$.
Let $\lambda+\mu=(\lambda_1+\mu_1,\lambda_2+\mu_2,\ldots)$. 
It suffices to show $\nu\leq_D \lambda+\mu$ 
whenever ${\sf LR}_{\lambda,\mu}^{\nu}\geq 0$.
Actually, we show $s_{\lambda+\mu}$ is the unique
$\leq_D$-maximal term in the Schur expansion of
$s_{\lambda}e_{\mu'}$. Indeed, since $e_{\mu'}=s_{\mu}+\text{(positive sum of Schur functions)}$, this will suffice.
The strengthening holds by an easy induction on the number of 
nonzero parts of $\mu$ and the Pieri rule (in the form of
\cite[Example~7.15.8]{Stanley}). Alternatively, this is straightforward to prove
from the Littlewood-Richardson rule. Iterating this argument shows that $s_{\lambda^{(1)}}\cdots s_{\lambda^{(N)}}$
has unique $\leq_D$ maximal term $s_{\lambda^{(1)}+\cdots+
\lambda^{(N)}}$ and hence is SNP.
\end{proof}

Let 
\[p_{k}=\sum_{i=1}^n x_i^k\]
be the {\bf power sum symmetric
polynomial}.  Moreover, let 
\[p_{\lambda}:=p_{\lambda_1}p_{\lambda_2}\cdots.\]  

\begin{Proposition}
\label{prop:characterargument}
Let 
\[f=\sum_{\lambda\vdash n} c_{\lambda} p_{\lambda}\in {\sf Sym}\] 
be not identically zero. Assume $c_{\lambda}\geq 0$ for all
$\lambda$, and that $f$ is Schur positive. Then
$f$ is SNP.
\end{Proposition}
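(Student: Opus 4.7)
The plan is to invoke Proposition~\ref{prop:generalthing}(III) with the single-row partition $(n)$ playing the role of the $\leq_D$-maximal Schur term. Since $f$ is Schur positive, write $f = \sum_{\mu \vdash n} a_\mu s_\mu$ with $a_\mu \geq 0$. The partition $(n)$ is the unique $\leq_D$-maximum of ${\sf Par}(n)$, so the hypothesis ``$a_\mu \neq 0$ only if $\mu \leq_D (n)$'' is automatic; all that remains is to establish $a_{(n)} > 0$. Granted this, for every $m \geq 1$ we have $\ell((n)) = 1 \leq m$, so Proposition~\ref{prop:generalthing}(III) applies to each specialization $f(x_1, \ldots, x_m)$, yielding that $f$ is SNP.

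For the positivity of $a_{(n)}$, I would use the Hall inner product $\langle \cdot, \cdot \rangle$ on ${\sf Sym}$ (see \cite[Chapter~7]{Stanley}), under which $\langle s_\mu, s_\nu \rangle = \delta_{\mu\nu}$ and $\langle p_\lambda, p_\mu \rangle = z_\lambda \delta_{\lambda\mu}$. Combining the classical identities $s_{(n)} = h_n$ and $h_n = \sum_{\lambda \vdash n} z_\lambda^{-1} p_\lambda$ gives $\langle p_\lambda, s_{(n)} \rangle = 1$ for every $\lambda \vdash n$, so
\[
a_{(n)} \;=\; \langle f, s_{(n)} \rangle \;=\; \sum_{\lambda \vdash n} c_\lambda.
\]
The assumption that each $c_\lambda \geq 0$ and that $f$ is not identically zero forces this sum to be strictly positive, completing the proof.

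The argument has no real obstacle; the only step requiring any thought is recognizing that $s_{(n)} = h_n$ pairs with every $p_\lambda$ to give $1$. The representation-theoretic gloss that motivates the label of the proposition is this: a Schur positive degree-$n$ symmetric function is the Frobenius characteristic of a genuine $S_n$-representation $V$, and the condition $c_\lambda \geq 0$ asserts that $\chi_V$ is a nonnegative class function; the multiplicity $a_{(n)}$ of the trivial representation in $V$ is $\frac{1}{n!}\sum_{\sigma \in S_n} \chi_V(\sigma)$, which under these hypotheses is automatically positive.
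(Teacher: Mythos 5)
Your proof is correct and follows essentially the same route as the paper: both arguments extract $[s_{(n)}]f = \sum_{\lambda} c_\lambda > 0$, observe that $(n)$ is the $\leq_D$-maximum of ${\sf Par}(n)$, and invoke Proposition~\ref{prop:generalthing}(III). The paper phrases the coefficient extraction via $p_\mu = \sum_\lambda \chi^\lambda(\mu)s_\lambda$ together with $\chi^{(n)}\equiv 1$, while you phrase it via $\langle p_\lambda, s_{(n)}\rangle = 1$ through the Hall inner product; these are the same fact in dual notation.
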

\begin{proof} 
Recall, $(n)$ indexes the trivial representation of $S_n$ that sends each $\pi\in 
S_n$ to the $1\times 1$ identity matrix. The character value $\chi^{(n)}(\mu)$,
being the 
trace of this matrix, is 
independent of $\pi$'s conjugacy class $\mu\vdash n$. Hence
$\chi^{(n)}(\mu)=1$. 

We have
\[p_{\mu}=\sum_{\lambda}\chi^{\lambda}(\mu)s_{\lambda}.\]
Therefore,
\[f=\sum_{\lambda\vdash n}c_{\lambda}p_{\lambda}=\left(\sum_{\lambda\vdash n}c_{\lambda}\right)s_{(n)}+\sum_{\lambda\neq (n)}
d_{\lambda}s_{\lambda}.\]
By hypothesis, each $c_{\lambda}\geq 0$. 
Since $f\not\equiv 0$, some $c_{\lambda}>0$ and hence 
$s_{(n)}$ appears. 
Now, $(n)$ is the (unique) maximum in $({\sf Par}(n),\leq_D)$. Also,
since $f$ is Schur positive, each $d_{\lambda}\geq 0$.
Hence the result follows from Proposition~\ref{prop:generalthing}(III).
\end{proof}

Let 
\[\omega:{\sf Sym}\to {\sf Sym}\] 
be the involutive automorphism defined by \[\omega(s_{\lambda})=s_{\lambda'},\]
where $\lambda'$ is the shape obtained by transposing the Young diagram of $\lambda$. 

\begin{Example}[$\omega$ does not preserve SNP]
Example~\ref{exa:SchurposdotimplySNP} shows $f=s_{(8,2,2)}+s_{(6,6)}\in {\sf Sym}$ is not SNP. Now
\[\omega(f)=s_{(3,3,1,1,1,1,1,1)}+s_{(2,2,2,2,2,2)}\in {\sf Sym}.\]
To see that $\omega(f)$ is SNP, it suffices to show that
any partition $\nu$ that is is a linear combination of rearrangements of $\lambda=(3,3,1,1,1,1,1,1)$ and $\mu=(2,2,2,2,2,2)$ satisfies $\nu\leq_D \lambda$ or $\nu\leq_D\mu$. We leave the details to the reader.
\qed
\end{Example}

\subsection{Examples and counterexamples}

\

\begin{Example}[Monomial symmetric and forgotten symmetric polynomials]
It is immediate from 
(\ref{eqn:newtonmonomial}) and
(\ref{eqn:containDominance}) that
\[m_{\lambda}\in {\sf Sym} \text{ \ is SNP $\iff\lambda=(1^n)$}.\]

The {\bf forgotten symmetric functions} are defined by 
\[f_{\lambda}=\varepsilon_{\lambda}\omega(m_{\lambda})\]
where $\varepsilon_{\lambda}=(-1)^{|\lambda|-\ell(\lambda)}$.
\end{Example}

\begin{Proposition}
$f_{\lambda}\in {\sf Sym}$ is SNP if and only if $\lambda=(1^n)$.
\end{Proposition}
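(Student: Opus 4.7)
The plan is to treat the two directions separately. For the ``if'' direction, if $\lambda=(1^n)$ then $m_{(1^n)}=e_n$, so $\omega(m_{(1^n)})=h_n$, and $\varepsilon_{(1^n)}=1$ gives $f_{(1^n)}=h_n$. The Newton polytope of $h_n$ in any number of variables is the full degree-$n$ simplex, and every lattice point is the exponent vector of a monomial of $h_n$, so $f_{(1^n)}$ is SNP.

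For the converse, I suppose $\lambda\neq(1^n)$, set $n=|\lambda|$, and aim to exhibit $(1,1,\ldots,1)$ as a lattice point in ${\sf Newton}(f_\lambda(x_1,\ldots,x_n))$ that does not occur as an exponent of $f_\lambda$. The crux is the coefficient identity $[m_{(1^n)}]\omega(m_\lambda)=\delta_{\lambda,(1^n)}$. I would obtain this by pairing against $h_{(1^n)}$ under the Hall inner product $\langle m_\mu,h_\nu\rangle=\delta_{\mu\nu}$ and invoking the self-adjointness of $\omega$ (which follows from $\omega(p_\mu)=\varepsilon_\mu p_\mu$):
\[ [m_{(1^n)}]\omega(m_\lambda)=\langle\omega(m_\lambda),h_{(1^n)}\rangle=\langle m_\lambda,\omega(h_{(1^n)})\rangle=\langle m_\lambda,h_{(1^n)}\rangle=\delta_{\lambda,(1^n)}, \]
where the penultimate equality uses $\omega(h_{(1^n)})=\omega(h_1^n)=e_1^n=h_1^n=h_{(1^n)}$. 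Under the hypothesis, this coefficient vanishes, so $x_1x_2\cdots x_n$ is absent from $f_\lambda$.

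To conclude, because $\omega$ is an automorphism, $\omega(m_\lambda)\neq 0$, so its monomial expansion $\sum_\mu b_\mu m_\mu$ has some nonzero $b_\mu$; by the vanishing above, there is some $\mu\vdash n$ with $\mu\neq (1^n)$ and $b_\mu\neq 0$. Since $(1^n)$ is the unique $\leq_D$-minimum of ${\sf Par}(n)$, (\ref{eqn:containDominance}) combined with (\ref{eqn:newtonmonomial}) places $(1,\ldots,1)\in\mathcal{P}_\mu={\sf Newton}(m_\mu)\subseteq{\sf Newton}(f_\lambda(x_1,\ldots,x_n))$, witnessing the failure of SNP. The only step requiring real care is the inner-product identity above; everything else is a straightforward assembly from the facts already recorded in the excerpt.
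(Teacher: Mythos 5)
Your proof is correct, and the two directions are assembled in the same overall frame as the paper's proof (exhibit $(1,\ldots,1)$ as a lattice point of ${\sf Newton}(f_\lambda(x_1,\ldots,x_n))$ that is not an exponent vector when $\lambda\neq(1^n)$, for $n=|\lambda|$), but your route to the key vanishing $[x_1\cdots x_n]f_\lambda=0$ is genuinely different. The paper invokes the explicit combinatorial formula (Stanley, Exercise~7.9) expressing $f_\lambda=\sum_\mu a_{\lambda\mu}m_\mu$, where $a_{\lambda\mu}$ counts rearrangements of $\lambda$ whose set of partial sums contains that of $\mu$; the vanishing $a_{\lambda,(1^n)}=0$ then falls out of a cardinality comparison, since $\ell(\lambda)<n$ partial sums cannot contain all of $\{1,\ldots,n\}$. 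You instead compute $[m_{(1^n)}]\omega(m_\lambda)$ via the Hall pairing $\langle m_\mu,h_\nu\rangle=\delta_{\mu\nu}$, the self-adjointness of $\omega$, and the fixed point $\omega(h_{(1^n)})=e_1^n=h_{(1^n)}$, obtaining $\delta_{\lambda,(1^n)}$ cleanly. Your argument is shorter and avoids reaching for the combinatorial formula, at the modest cost of importing the Hall inner product machinery; it also replaces the paper's appeal to the $m$-positivity of $f_\lambda$ with the observation that $\omega(m_\lambda)\neq 0$ forces some $b_\mu\neq 0$ with $\mu\neq(1^n)$, which together with Rado's theorem (\ref{eqn:containDominance}) and (\ref{eqn:newtonmonomial}) places $(1,\ldots,1)$ in the Newton polytope. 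Both are sound; the paper's version has the side benefit of being self-contained given its earlier citation of the forgotten-function formula, while yours is the more structural argument.
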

\begin{proof}
($\Leftarrow$) If $\lambda=(1^n)$ then $m_{\lambda}=s_{(1^n)}$ and 
$f_\lambda=s_{(n,0,0,\ldots,0)}$ which is SNP.

($\Rightarrow$) We use the following formula \cite[Exercise~7.9]{Stanley}:
\[f_{\lambda}=\sum_{\mu} a_{\lambda\mu} m_{\mu}\]
where $a_{\lambda\mu}$ is the number of distinct rearrangements $(\gamma_1,\ldots,\gamma_{\ell(\lambda)})$ of
$\lambda=(\lambda_1,\ldots,\lambda_{\ell(\lambda)})$ such that
\begin{equation}
\label{eqn:feb20abcd}
\left\{\sum_{s=1}^{i}\gamma_s:1\leq i\leq \ell(\lambda)\right\}\supseteq \left\{\sum_{t=1}^j \mu_t: 1\leq j\leq \ell(\mu)\right\}.
\end{equation}

Suppose $\lambda\neq (1^n)$. If $\mu=(1^n)$ then  
\[\left\{\sum_{t=1}^j \mu_t: 1\leq j\leq \ell(\mu)\right\}=\{1,2,3,\ldots,n\}.\]
On the other hand, $\ell(\lambda)<n$ and hence
the set on the lefthand side of (\ref{eqn:feb20abcd}) has size strictly smaller than $n$.
Thus $a_{\lambda,(1^n)}=0$.

Now, $(1^n)\in {\mathcal P}_{\mu}$ 
for all $\mu=(\mu_1,\ldots,\mu_n)\vdash n$. Then since 
$f_{\lambda}$ is $m$-positive, $(1^n)\in {\sf Newton}(f_{\lambda})$ so long as we are working
in at least $\deg(f_{\lambda})$ many variables.
If $\lambda\neq (1^n)$ then $a_{\lambda,(1^n)}=0$ means $(1^n)$
is not an exponent vector of $f_{\lambda}$. This proves the contrapositive of
($\Rightarrow$).
\end{proof}

\begin{Example}[Elementary and complete homogeneous symmetric polynomials]
The {\bf elementary symmetric polynomial} is defined by
\[e_k(x_1,\ldots,x_n)=\sum_{1\leq i_1<i_2<\ldots<i_k\leq n} x_{i_1}x_{i_2}\cdots x_{i_k}.\]
Also define 
\[e_{\lambda}=e_{\lambda_1}e_{\lambda_2}\cdots\]
The {\bf complete homogeneous symmetric polynomial} $h_k(x_1,\ldots,x_n)$ is the sum of all degree $k$ 
monomials. Also define
\[h_{\lambda}=h_{\lambda_1}h_{\lambda_2}\cdots.\]
\end{Example}

\begin{Proposition}
\label{cor:eandhSNP}
Each $e_{\lambda}$ and $h_{\lambda}$ is SNP.
\end{Proposition}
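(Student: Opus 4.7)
The plan is to reduce both statements directly to Proposition~\ref{prop:productSchur} by recognizing $e_k$ and $h_k$ as single Schur polynomials. Specifically, $e_k = s_{(1^k)}$ (one-column shape) and $h_k = s_{(k)}$ (one-row shape); these are standard identities, e.g., immediate from the combinatorial definition of Schur polynomials via semistandard Young tableaux, since an SSYT of shape $(1^k)$ is a strict chain of $k$ entries (giving $e_k$) and an SSYT of shape $(k)$ is a weakly increasing sequence of $k$ entries (giving $h_k$).

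Consequently,
\[
e_\lambda = s_{(1^{\lambda_1})} s_{(1^{\lambda_2})} \cdots s_{(1^{\lambda_{\ell(\lambda)})}}
\quad\text{and}\quad
h_\lambda = s_{(\lambda_1)} s_{(\lambda_2)} \cdots s_{(\lambda_{\ell(\lambda)})},
\]
so each of $e_\lambda$ and $h_\lambda$ is a product of Schur polynomials. Proposition~\ref{prop:productSchur} then immediately asserts that each is SNP.

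There is essentially no obstacle: the only thing to check is the identification $e_k = s_{(1^k)}$ and $h_k = s_{(k)}$, which is classical and needs no computation beyond citing or recalling the SSYT definition. If one wanted to be fully self-contained without invoking Proposition~\ref{prop:productSchur}, one could instead argue directly using Proposition~\ref{prop:generalthing}(III): the Schur expansions of $e_\lambda$ and $h_\lambda$ are Schur-positive (as products of Schur-positive functions), and by iterating the Pieri rule one verifies that the unique $\leq_D$-maximal Schur term is $s_{\lambda'}$ in the case of $e_\lambda$ and $s_{\lambda}$ in the case of $h_\lambda$. But invoking Proposition~\ref{prop:productSchur} is the cleanest route and renders the proof a one-liner.
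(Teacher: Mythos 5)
Your proof is correct and takes exactly the same route as the paper: identify $e_k = s_{(1^k)}$ and $h_k = s_{(k)}$, and apply Proposition~\ref{prop:productSchur}. The extra remarks about a self-contained alternative via Proposition~\ref{prop:generalthing}(III) are sound but not part of the paper's argument.
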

\begin{proof}
Since $e_k=s_{(1^k)}$ and $h_k=s_{(k)}$ the claim holds by Proposition~\ref{prop:productSchur}.
\end{proof}

The {\bf Minkowski sum} of two polytopes ${\mathcal P}$ and 
${\mathcal Q}$ is  
\[{\mathcal P}+{\mathcal Q}=\{p+q:p\in {\mathcal P},q\in{\mathcal Q}\}.\]
Thus,
\[{\sf Newton}(f\cdot g)={\sf Newton}(f)+{\sf Newton}(g).\]
By the Pieri rule,
\[e_{(1)}(x_1,\ldots,x_n)^k=\sum_{\lambda} {\sf f}^{\lambda} s_{\lambda}(x_1,\ldots,x_n).\]
In particular $s_{(k,0,\ldots,0)}$ appears on the right hand side.
Since $\lambda\leq_D (k)$ for all $\lambda\vdash k$, by 
Proposition~\ref{prop:generalthing}(I) one recovers that the Minkowski sum of $k$ 
regular simplices 
in ${\mathbb R}^n$ is ${\mathcal P}_{(k,0,\ldots,0)}$. Similarly, by the argument of
Proposition~\ref{prop:productSchur}, 
${\mathcal P}_{\lambda}={\sf Newton}(e_{\lambda'})$ and hence one recovers that ${\mathcal P}_{\lambda}$ is a 
Minkowski sum of hypersimplices. For earlier work see, e.g., \cite{Postnikov, Croitoru, Morris}.\qed

\begin{Example}[$e$-positivity does not imply SNP]
$f\in {\sf Sym}$ is {\bf $e$-positive} if 
$f=\sum_{\lambda}a_{\lambda} e_{\lambda}$ where $a_{\lambda}\geq 0$ for every $\lambda$. 
(Since 
\[e_{\lambda}=\sum_{\mu}{\sf K}_{\mu',\lambda}s_{\mu},\] 
$e$-positivity implies Schur positivity.) Look at
\[f=e_{(3,3,1,1,1,1,1,1)}+ e_{(2,2,2,2,2,2)} \in {\sf Sym}.\] 
In the monomial expansion, $m_{(8,2,2)}$ and $m_{(6,6)}$ appear. However, $m_{(7,4,1)}$ does not appear. This implies $f$ is not SNP.
\qed
\end{Example}

\begin{Example}[More on power sum symmetric polynomials]
Recall the power sum symmetric polynomials defined immediately before
Proposition~\ref{prop:characterargument}. Clearly $p_k$ is not SNP
if $k>1$ and $n>1$. Also,
$p_{\lambda}$ is not SNP for $n>1$ whenever 
$\lambda_i\geq 2$ for all $i$. This is since $x_1^{|\lambda|}$ and 
$x_2^{|\lambda|}$ both appear as monomials in $p_{\lambda}$
but $x_1^{|\lambda|-1}x_2$ does not. Furthermore:
\end{Example}

\begin{Proposition}
$p_\lambda \in {\sf Sym}_n$ for $n > \ell(\lambda)$ is SNP if and only if $\lambda = (1^k)$.
\end{Proposition}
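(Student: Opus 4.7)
The plan is to pin down ${\sf Newton}(p_\lambda)$ and characterize which of its lattice points actually occur as monomial exponents. Since ${\sf Newton}(p_m)=m\Delta$ with $\Delta={\rm conv}(e_1,\ldots,e_n)$, and Newton polytopes of products are Minkowski sums, I would note ${\sf Newton}(p_\lambda)=\sum_i \lambda_i\Delta = k\Delta$, where $k=|\lambda|$; its lattice points are exactly all $\beta\in{\mathbb Z}_{\geq 0}^n$ with $\sum_j\beta_j=k$. On the other hand, expanding $p_\lambda=\prod_i\bigl(\sum_{j=1}^n x_j^{\lambda_i}\bigr)$, a monomial exponent $\beta$ arises from some function $\phi:[\ell(\lambda)]\to[n]$ via $\beta_j=\sum_{i:\phi(i)=j}\lambda_i$. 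Hence the multiset of nonzero entries of $\beta$ is a \emph{coarsening} of the multiset of parts of $\lambda$ (obtained by grouping parts and summing within blocks); in particular $\beta$ has at most $\ell(\lambda)$ nonzero coordinates.

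For the ($\Leftarrow$) direction, when $\lambda=(1^k)$ one has $p_\lambda=(x_1+\cdots+x_n)^k$, which by the multinomial theorem is the sum of all degree $k$ monomials; so every lattice point of $k\Delta$ is a monomial, proving SNP. For ($\Rightarrow$), suppose $\lambda\neq(1^k)$ and set $\ell:=\ell(\lambda)$; since some part of $\lambda$ exceeds $1$, $\ell<k$. I would take as witness $\mu=(k-\ell,1^\ell)$, a partition of $k$ of length $\ell+1$. The assumption $n>\ell$ makes $\mu\in{\mathbb Z}_{\geq 0}^n$, so $\mu\in{\sf Newton}(p_\lambda)$. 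But $\ell(\mu)=\ell+1>\ell(\lambda)$, violating the coarsening criterion. Hence $x^\mu$ is not a monomial of $p_\lambda$, and SNP fails.

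The only slightly delicate point is the coarsening characterization (and its consequence that monomial exponents have at most $\ell(\lambda)$ nonzero entries); once that is in hand, the hypothesis $n>\ell(\lambda)$ is used precisely to ensure the partition $(k-\ell,1^\ell)$ fits in $n$ variables, and the witness does the rest.
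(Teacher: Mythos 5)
Your proof is correct and rests on the same key observation the paper uses: every monomial of $p_\lambda$ involves at most $\ell(\lambda)$ distinct variables, while the Newton polytope (which you cleanly identify as the dilated simplex $|\lambda|\Delta$) contains a lattice point with $\ell(\lambda)+1$ nonzero coordinates once $n>\ell(\lambda)$. The only cosmetic difference is the witness and how its membership is established---you use $(k-\ell,1^\ell)$ and the global description of the polytope, whereas the paper exhibits $(\lambda_1-1,\lambda_2,\ldots,\lambda_\ell,1)$ directly as a convex combination of two exponent vectors and, for the converse, cites $p_{(1^k)}=e_{(1^k)}$ rather than the multinomial theorem.
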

\begin{proof}
($\Leftarrow$) If $\lambda = (1^k)$, $p_\lambda = e_\lambda$ which is SNP by Proposition~\ref{cor:eandhSNP}.

($\Rightarrow$) Suppose $\lambda_1 \geq 2$ and let $\ell = \ell(\lambda)$.  Then since 
$n > \ell$, $x_1^{\lambda_1}x_2^{\lambda_2}\cdots x_\ell^{\lambda_\ell}$ and 
$x_2^{\lambda_2}\cdots x_\ell^{\lambda_\ell}x_{\ell + 1}^{\lambda_1}$ are monomials in 
$p_\lambda$. Thus, 
\[ (\lambda_1-1,\lambda_2,\ldots,\lambda_\ell,1) = 
\frac{\lambda_1-1}{\lambda_1}(\lambda_1,\lambda_2,\ldots,\lambda_\ell,0) + 
\frac{1}{\lambda_1}(0,\lambda_2,\ldots,\lambda_\ell,\lambda_1) \in {\sf 
Newton}(p_\lambda).\] However, this point cannot be an exponent vector since it has 
$\ell+1$ nonzero components whereas every monomial of $p_\lambda$ uses at most $\ell$ 
distinct variables.
\end{proof}

\begin{Example}[The resultant, the Gale-Ryser theorem and $(0,1)$-matrices]
Let 
\[f=\sum_{i=0}^m a_i z^i \text{\  and 
$g=\sum_{i=1}^n b_i z^i$}\] 
be two polynomials of degree $m$ and $n$
respectively and with roots $\{x_1,\ldots,x_m\}$
and $\{y_1,\ldots,y_n\}$ respectively (not necessarily distinct). The
{\bf resultant} is
\[R(f,g)=a_m^n b_n^m\prod_{i=1}^m \prod_{j=1}^n (x_i-y_j).\]
This polynomial is separately symmetric in the $x$ and $y$ variables.
In \cite{GKZ} the Newton polytope of $R(f,g)$ is determined; see also the book
 \cite{GKZbook}. However, we are not aware of the following result appearing
explicitly in the literature:

\end{Example}

\begin{Theorem} 
\label{prop:resultantSNP}
$R(f,g)$ is SNP.
\end{Theorem}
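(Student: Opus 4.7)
The plan is to use the Poisson product formula
\[
R(f,g) = (-1)^{mn} b_n^m \prod_{j=1}^n f(y_j)
\]
combined with a bialternant expansion. Multiplying by the Vandermonde $V(y) = \det(y_j^{i-1})$, the identity $V(y) \prod_j f(y_j) = \det(y_j^{i-1} f(y_j))_{i,j}$ lets me expand multilinearly in $f(y_j) = \sum_k a_k y_j^k$. Grouping by the partition $\nu$ indexing each resulting bialternant yields a Cauchy-type expansion
\[
\prod_{j=1}^n f(y_j) = \sum_{\nu \subseteq (m^n)} c_\nu(a)\, s_\nu(y_1,\ldots,y_n),
\]
where $c_\nu(a) = (-1)^{\binom{n}{2}} \det(a_{\nu_l+n-l-k+1})_{l,k=1}^n$ is a Jacobi--Trudi-type determinant. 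Applying the dual Jacobi--Trudi to $s_\nu(y)$ together with Vieta's $e_k(y) = (-1)^k b_{n-k}/b_n$ writes $b_n^m s_\nu(y)$ as an explicit polynomial $d_\nu(b) \in \mathbb{Z}[b_0,\ldots,b_n]$, giving
\[
R(f,g) = (-1)^{mn} \sum_{\nu \subseteq (m^n)} c_\nu(a)\, d_\nu(b).
\]

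Under the formal identifications $a_k \leftrightarrow h_k(\mathbf X)$ and $b_\ell \leftrightarrow h_\ell(\mathbf Y)$ for auxiliary alphabets $\mathbf X, \mathbf Y$, each $c_\nu(a)$ and $d_\nu(b)$ corresponds (via Jacobi--Trudi) to the Schur polynomial $s_\nu(\mathbf X)$ and $s_\nu(\mathbf Y)$ respectively, and the total sum collapses to $\sum_\nu s_\nu(\mathbf X)\, s_\nu(\mathbf Y)$ -- the Cauchy identity in these alphabets. By Proposition~\ref{prop:productSchur} (products of Schur polynomials are SNP), each summand $c_\nu(a)\, d_\nu(b)$ is individually SNP. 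To deduce SNP of the full sum $R(f,g)$, I would invoke a bihomogeneous adaptation of the dominance-order technique in Proposition~\ref{prop:generalthing}(III): matching each lattice point $(\alpha,\beta)\in {\sf Newton}(R(f,g))$ to a distinguished partition $\nu$ whose contribution to the signed sum is not cancelled by contributions from other $\nu$'s.

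The main obstacle is precisely the control of sign cancellations in the signed sum: two different summands can a priori produce the same monomial $a^\alpha b^\beta$ with opposing signs, via the factors $(-1)^{|\nu|}$ from Vieta and the $\mathrm{sgn}(\sigma)$ appearing inside each Jacobi--Trudi determinant. Ruling this out for every interior lattice point requires combining the GKZ description \cite{GKZ} of ${\sf Newton}(R(f,g))$ as a secondary polytope of the Cayley configuration of $\{0,\ldots,m\}$ and $\{0,\ldots,n\}$ with an explicit combinatorial characterization (via bipartite matchings or non-crossing path systems through the Sylvester matrix) of which $\nu$'s contribute to which exponent, so that signs can be tracked and non-cancellation established.
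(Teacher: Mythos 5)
Your sketch does not close; you name the decisive difficulty yourself --- ``the main obstacle is precisely the control of sign cancellations in the signed sum'' --- and then outline a program (secondary polytopes, bipartite matchings, Sylvester path systems) rather than carry it out. The Cauchy-identity collapse you observe is only a \emph{formal} statement after substituting $a_k \leftrightarrow h_k(\mathbf X)$, $b_\ell \leftrightarrow h_\ell(\mathbf Y)$; that substitution does not preserve Newton polytopes or SNPness, so SNPness of each $s_\nu(\mathbf X)s_\nu(\mathbf Y)$ says nothing about SNPness of $c_\nu(a)\,d_\nu(b)$, and in any case Proposition~\ref{prop:generalthing}(III) requires positivity of the expansion coefficients, which your signed determinantal sum does not have. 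Until non-cancellation at every interior lattice point is actually established, this is a plan, not a proof.

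There is also a mismatch in what is being proven. You compute throughout in the coefficient variables $a_0,\dots,a_m,b_0,\dots,b_n$ (as GKZ do when describing ${\sf Newton}(R(f,g))$ as a secondary polytope), whereas the paper's Theorem~\ref{prop:resultantSNP} and its proof treat $R(f,g)=a_m^n b_n^m\prod_{i,j}(x_i-y_j)$ as a polynomial in the root variables $x_1,\dots,x_m,y_1,\dots,y_n$. These are different SNP questions; one does not transfer to the other under the substitution $a_k = \pm e_k(x)$, $b_\ell=\pm e_\ell(y)$.

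The paper's proof avoids all sign issues. It passes to $F=\prod_{i,j}(1+x_i y_j)$, whose coefficient at $x^\alpha y^\beta$ counts $(0,1)$-matrices with row sums $\alpha$ and column sums $\beta$; nonvanishing is the Gale--Ryser condition $\lambda(\beta)\leq_D \lambda(\alpha)'$, and a theorem of Barvinok (approximate log-concavity of these matrix counts) shows the set of GR-pairs is closed under taking integer points of convex combinations. Hence $F$ is SNP, and a short chain of monomial-equivalences transports this to $R(f,g)$. If you want to salvage your route, you would need an unconditional non-cancellation theorem in the coefficient alphabet; the Gale--Ryser/Barvinok route sidesteps the problem entirely by staying in a manifestly positive expansion.
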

\begin{proof}
Consider
\[F=\prod_{i=1}^m\prod_{j=1}^n(1+x_i y_j).\]
In fact, $[x^{\alpha}y^{\beta}]F$ equals the number of $(0,1)$-matrices
of dimension $m\times n$ whose row sums are given by $\alpha$ and column
sums are given by $\beta$; see, e.g., \cite[Proposition~7.4.3]{Stanley}. Let ${\sf M}(\alpha,\beta)$ equal the number of these matrices.
The Gale-Ryser theorem states
\begin{equation}
\label{eqn:GR}
{\sf M}(\alpha,\beta)>0 \iff \lambda(\beta)\leq_D \lambda(\alpha)',
\end{equation}
where $\lambda(\gamma)$ is the partition obtained by sorting a nonnegative integer sequence in decreasing order.
Call a pair of vectors $(\alpha,\beta)\in {\mathbb Z}_{\geq 0}^{m+n}$ a {\bf GR pair} if it satisfies either of the equivalent
conditions in (\ref{eqn:GR}).

In fact $F$ is SNP. Suppose 
\[(\alpha^{(1)},\beta^{(1)}), (\alpha^{(2)},\beta^{(2)}),\ldots, (\alpha^{(N)},\beta^{(N)})\] 
are GR pairs and
\[(\alpha,\beta)=\sum_{t=1}^N d_i (\alpha^{(t)},\beta^{(t)})\]
with $d_i\geq 0$ and $\sum_{t=1}^N d_i=1$ be a convex combination. The SNPness of $F$
is equivalent to the claim $(\alpha,\beta)$ is a GR pair whenever $(\alpha,\beta)\in {\mathbb Z}_{\geq 0}^{m+n}$.
The latter claim is immediate from \cite[Theorem~3, part~1]{Barvinok} which establishes the ``approximate
log-concavity'' of ${\sf M}(\alpha,\beta)$. We thank A.~Barvinok for pointing out this reference to us.

Now notice that
\begin{align*}
\text{$F$ is SNP} & \text{$\iff \prod_{i=1}^m\prod_{j=1}^n(1+x_i y_j^{-1})$ is SNP}\\
& \text{$\iff y_1^m y_2^m\cdots y_n^m \prod_{i=1}^m\prod_{j=1}^n(1+x_i y_j^{-1})$ is SNP}\\
& \text{$\iff \prod_{i=1}^m\prod_{j=1}^n (x_i+y_j)$ is SNP}\\
& \text{$\iff \prod_{i=1}^m\prod_{j=1}^n (x_i-y_j)$ is SNP}\\
& \text{$\iff R(f,g)$ is SNP.}
\end{align*}
The final equivalence is true since $a_0,b_0\neq 0$ and the previous equivalence holds since the polynomials
in the third and fourth lines clearly share the same monomials. This relation between $F$ and $R(f,g)$ appears
in \cite{GKZ} where the authors use it to obtain a formula for the monomials of $R(f,g)$ in terms of counts for
$(0,1)$-matrices.
\end{proof}

Conjecture~\ref{conj:double} claims a generalization of Theorem~\ref{prop:resultantSNP}; see 
Example~\ref{exa:doubleschubres}.\qed
  
\begin{Example}[Powers of the Vandermonde]
The {\bf Vandermonde determinant} is
\[a_{\delta_n}=\prod_{1\leq i<j\leq n} (x_i-x_j).\]
(This polynomial is only skew-symmetric.) 
It is known that 
\[{\sf Newton}(a_{\delta_n})={\mathcal P}_{(n-1,n-2,\ldots,2,1,0)}\subset
{\mathbb R}^n;\] 
see e.g., \cite[Proposition~2.3]{Postnikov}. 
\end{Example}

\begin{Proposition}
\label{prop:vandy}
$a_{\delta_n}$ is SNP if and only if $n \leq 2$.
\end{Proposition}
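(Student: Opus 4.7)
The plan rests on two facts already in the excerpt. First, expanding $a_{\delta_n} = \det[x_i^{n-j}]$ as a signed sum over $S_n$ shows that the nonzero monomials of $a_{\delta_n}$ are precisely indexed by permutations, with exponent vectors running over the $n!$ rearrangements of $\delta_n = (n-1, n-2, \ldots, 1, 0)$; since $\delta_n$ is strict, there is no cancellation. These rearrangements are exactly the vertices of ${\mathcal P}_{\delta_n}$. Second, the preceding paragraph of the excerpt records ${\sf Newton}(a_{\delta_n}) = {\mathcal P}_{\delta_n}$. Hence SNPness of $a_{\delta_n}$ is equivalent to the statement that every lattice point of ${\mathcal P}_{\delta_n}$ is a rearrangement of $\delta_n$.

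The small cases are handled directly. For $n=1$, $a_{\delta_1} = 1$ is vacuously SNP. For $n=2$, ${\sf Newton}(x_1 - x_2) = {\rm conv}\{(1,0),(0,1)\}$ contains only its two vertices as lattice points, and both are monomials.

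For $n \geq 3$ the plan is to exhibit a single witness lattice point that is not a rearrangement of $\delta_n$. Take
\[ \mu = (n-1, n-2, \ldots, 4, 3, 1, 1, 1), \]
obtained from $\delta_n$ by replacing the tail $(2,1,0)$ with $(1,1,1)$ (so for $n=3$ the prefix is empty and $\mu = (1,1,1)$). Then $|\mu| = \binom{n}{2}$, $\ell(\mu) \leq n$, and a one-line partial-sum check gives $\mu \leq_D \delta_n$: the partial sums agree through index $n-3$, differ by $-1$ at indices $k=n-2$ and $k=n-1$, and agree again at $k=n$. By Rado's theorem in the form (\ref{eqn:containDominance}), ${\mathcal P}_{\mu} \subseteq {\mathcal P}_{\delta_n}$, so $\mu$ is a lattice point of ${\sf Newton}(a_{\delta_n})$. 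Since $\delta_n$ is strict while $\mu$ has repeated parts, $\mu$ is not a rearrangement of $\delta_n$ and hence not an exponent vector of $a_{\delta_n}$. This breaks SNP.

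There is no genuine obstacle here; the only step requiring care is the verification of the dominance inequality at the two indices where $\mu$ and $\delta_n$ differ, which is a routine finite computation. The conceptual content is that the support of $a_{\delta_n}$ is exactly the vertex set of its Newton polytope, while for $n \geq 3$ the permutahedron ${\mathcal P}_{\delta_n}$ acquires many non-vertex lattice points from dominance-smaller partitions.
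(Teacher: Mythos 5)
Your proposal is correct, and it takes a genuinely different route from the paper's. The paper checks $n \leq 3$ by hand (exhibiting $(1,1,1)$ as the witness non-vertex lattice point of ${\mathcal P}_{\delta_3}$) and then invokes Lemma~\ref{prop:higher}, which lifts a witness for $a_{\delta_n}^k$ to a witness for $a_{\delta_{n+1}}^k$ by appending a coordinate equal to $kn$, to run an induction on $n$. You instead sidestep the inductive lemma entirely: you observe that the support of $a_{\delta_n}$ is precisely the $n!$ rearrangements of $\delta_n$ (no cancellation since the parts are distinct), that these are exactly the vertices of ${\mathcal P}_{\delta_n}$ by Proposition~\ref{prop:generalthing}(II), and then you exhibit, for every $n\geq 3$, the explicit partition $\mu=(n-1,\ldots,4,3,1,1,1)\leq_D\delta_n$ with repeated parts. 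Amusingly, iterating the paper's Lemma~\ref{prop:higher} starting from $(1,1,1)$ produces a rearrangement of your $\mu$, so the two constructions yield the same witnesses by different bookkeeping. What the paper's lemma buys is uniformity in the exponent $k$: it is reused for Proposition~\ref{abel} (the discriminant, $k=2$) and motivates the conjecture on higher powers, whereas your direct construction only addresses $k=1$. What your approach buys is a cleaner, self-contained, and arguably more illuminating argument for this specific proposition, making the geometric mechanism (the permutahedron ${\mathcal P}_{\delta_n}$ acquires interior lattice points from dominance-smaller partitions once $n\geq 3$) completely explicit.
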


The classical {\bf discriminant} is
$\Delta_n=\alpha_{\delta_n}^2$. 
Its Newton polytope was also determined by
in work of I.~M.~Gelfand-M.~Kapranov-A.~V.~Zelevinsky \cite{GKZ}.

\begin{Proposition}
\label{abel}
$\Delta_n$ is SNP if and only if $n\leq 4$.
\end{Proposition}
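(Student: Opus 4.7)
The plan is first to identify ${\sf Newton}(\Delta_n)$, second to verify SNP for $n \leq 4$ by finite computation, and third to prove failure for all $n \geq 5$ by lifting a single counterexample at $n=5$ via a specialization identity.

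Since $\Delta_n = a_{\delta_n}^2$ and the Newton polytope of a product is the Minkowski sum, I would combine the identity ${\sf Newton}(a_{\delta_n})={\mathcal P}_{\delta_n}$ quoted before Proposition~\ref{prop:vandy} with $2{\mathcal P}_{\delta_n}={\mathcal P}_{2\delta_n}$ to obtain ${\sf Newton}(\Delta_n) = {\mathcal P}_{2\delta_n}$, where $2\delta_n = (2n-2,2n-4,\ldots,2,0)$. By Rado's theorem~(\ref{eqn:containDominance}), the lattice points of ${\mathcal P}_{2\delta_n}$ in ${\mathbb R}^n$ are the $S_n$-arrangements of partitions $\mu \leq_D 2\delta_n$ with $\ell(\mu) \leq n$. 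Since $\Delta_n$ is symmetric, SNP is equivalent to $[x^\mu]\Delta_n \neq 0$ for every such $\mu$, and the signed-pair expansion
\[ [x^\mu]\Delta_n \;=\; \sum_{(\sigma,\tau)\in S_n\times S_n\,:\,\sigma(\delta_n)+\tau(\delta_n)\,=\,\mu} \mathrm{sgn}(\sigma\tau) \]
reduces checking this to a finite combinatorial count.

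For $n \leq 4$, the list of dominated partitions is small enough to tabulate each $[x^\mu]\Delta_n$ directly (either by expanding $\prod_{i<j}(x_i-x_j)^2$ in a CAS, or by the signed-pair formula), confirming SNP. For the converse at $n \geq 5$, the key tool is the specialization identity
\[ \Delta_{n+1}(x_1,\ldots,x_n,0) \;=\; (x_1 x_2 \cdots x_n)^2 \, \Delta_n(x_1,\ldots,x_n), \]
immediate from setting $x_{n+1}=0$ in the product. Given a partition $\nu=(\nu_1,\ldots,\nu_n)$ with $\nu \leq_D 2\delta_n$ and $[x^\nu]\Delta_n = 0$, I would form $\mu := (\nu_1+2,\ldots,\nu_n+2,0)$. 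Then $\mu$ is a partition of length $\leq n$, the identity forces $[x^\mu]\Delta_{n+1}=0$, and the partial-sum estimate
\[ \sum_{i=1}^k \mu_i \;=\; \sum_{i=1}^k \nu_i + 2k \;\leq\; \sum_{i=1}^k 2(n-i) + 2k \;=\; \sum_{i=1}^k 2(n+1-i) \]
for $k \leq n$, with equality at $k=n+1$ (both sides equal $n(n+1)$), establishes $\mu \leq_D 2\delta_{n+1}$. Hence $\mu$ is a lattice point of ${\sf Newton}(\Delta_{n+1})$ absent from its monomial support, so $\Delta_{n+1}$ fails SNP. Iterating, a single witness at $n=5$ suffices.

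The main obstacle is therefore producing the base witness $\nu_0 \leq_D (8,6,4,2,0)$ at $n=5$. By Proposition~\ref{prop:generalthing}(II), no vertex of ${\mathcal P}_{(8,6,4,2,0)}$ can serve: the symmetry argument $\sigma(\delta_5)+\tau(\delta_5) = w(2\delta_5)$ forces $\sigma = \tau = w$ coordinatewise (propagating from the extreme entries $8$ and $0$ inward), so each rearrangement of $2\delta_5$ carries exactly one signed contribution with sign $+1$. Thus $\nu_0$ must be a strictly interior lattice partition whose signed-pair count cancels. I would search computationally across the $(5!)^2 = 14400$ pairs $(\sigma,\tau)\in S_5 \times S_5$, group them by $\sigma(\delta_5)+\tau(\delta_5)$, and read off any $\nu_0$ whose $\mathrm{sgn}$-weighted count vanishes. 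The delicate point is that cancellation genuinely requires $n\geq 5$ (since the $n\leq 4$ tabulation rules it out there); once a single such $\nu_0$ is located numerically, the inductive lifting above closes the proof with no further case analysis.
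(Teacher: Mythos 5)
Your proposal is correct and follows the same broad strategy as the paper---finite verification for $n \leq 4$ and at $n = 5$, then an inductive lifting lemma from $n$ to $n+1$---but your lifting lemma is genuinely different from the paper's Lemma~\ref{prop:higher}. The paper sends a lattice point $\alpha$, written as a convex combination of exponent vectors $\beta^i$ of $a_{\delta_n}^k$, to $\alpha' = (\alpha, kn)$: since each $(\beta^i, kn)$ is an exponent vector of $a_{\delta_{n+1}}^k = a_{\delta_n}^k \cdot \prod_{i}(x_i - x_{n+1})^k$ (taking the top $x_{n+1}$-power in every factor of the second term), $\alpha'$ is automatically a convex combination of exponent vectors, so it lies in the Newton polytope without any appeal to dominance or Rado's theorem. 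Your lift $\nu \mapsto (\nu_1+2,\ldots,\nu_n+2,0)$ is the dual move---bottom $x_{n+1}$-degree, compensated by shifting the first $n$ coordinates by $2$---and so you must separately verify $\mu \leq_D 2\delta_{n+1}$ via partial sums and invoke Rado's theorem to place $\mu$ in the Newton polytope; your computation does this correctly. A small trade-off: in your version partitions lift to partitions, which keeps the dominance bookkeeping natural; the paper's version applies verbatim to $a_{\delta_n}^k$ for any $k$, so the one lemma serves Propositions~\ref{prop:vandy} and~\ref{abel} simultaneously. Your observation that every rearrangement of $2\delta_n$ carries coefficient $+1$ (forcing the $n=5$ witness to be interior to ${\mathcal P}_{2\delta_5}$) is a useful supplement not made explicit in the paper, which simply reports the witnesses as the $5!$ rearrangements of $(1,3,4,5,7)$.
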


Proposition~\ref{abel} is a curious coincidence with the Abel-Ruffini theorem.

Our proofs of Propositions~\ref{prop:vandy} and~\ref{abel} will use this lemma:

\begin{Lemma}
\label{prop:higher}
If $a_{\delta_n}^k$ is not SNP, then $a_{\delta_{n+1}}^k$ is not SNP.
\end{Lemma}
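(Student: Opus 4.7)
The plan is to exploit the specialization $x_{n+1} = 0$, which relates the two Vandermondes by a clean formula. Among the factors $(x_i - x_j)$ with $1 \leq i < j \leq n+1$, those with $j = n+1$ collapse to $x_i$, while the remaining factors constitute $a_{\delta_n}$. Hence
\[
a_{\delta_{n+1}}(x_1,\ldots,x_n,0) = (x_1 x_2 \cdots x_n)\, a_{\delta_n},
\]
so raising to the $k$th power gives
\[
a_{\delta_{n+1}}^k(x_1,\ldots,x_n,0) = (x_1 x_2 \cdots x_n)^k \cdot a_{\delta_n}^k.
\]

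Now assume $a_{\delta_n}^k$ is not SNP, and choose a witness $\alpha \in \mathsf{Newton}(a_{\delta_n}^k) \cap \mathbb{Z}_{\geq 0}^n$ with $[x^\alpha]\, a_{\delta_n}^k = 0$. I would define the lifted vector $\tilde\alpha = (\alpha_1 + k,\ldots, \alpha_n + k, 0) \in \mathbb{Z}_{\geq 0}^{n+1}$ and argue that it witnesses the non-SNPness of $a_{\delta_{n+1}}^k$.

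To see that $\tilde\alpha$ is not an exponent vector: only monomials of $a_{\delta_{n+1}}^k$ with $x_{n+1}^0$ survive the specialization $x_{n+1}=0$, so $[x^{\tilde\alpha}]\, a_{\delta_{n+1}}^k$ equals the coefficient of $x_1^{\alpha_1+k}\cdots x_n^{\alpha_n+k}$ in $a_{\delta_{n+1}}^k(x_1,\ldots,x_n,0)$. By the displayed identity this equals $[x^\alpha]\, a_{\delta_n}^k = 0$.

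To see that $\tilde\alpha \in \mathsf{Newton}(a_{\delta_{n+1}}^k)$: write $\alpha = \sum_t c_t \beta^{(t)}$ as a convex combination of exponent vectors of $a_{\delta_n}^k$. The same specialization argument shows that each $\tilde\beta^{(t)} := (\beta^{(t)}_1 + k,\ldots,\beta^{(t)}_n + k, 0)$ is an exponent vector of $a_{\delta_{n+1}}^k$. Then $\tilde\alpha = \sum_t c_t \tilde\beta^{(t)}$ is a convex combination of exponent vectors of $a_{\delta_{n+1}}^k$, as required. There is no real obstacle: once the specialization identity is verified (an immediate computation from the factored form of the Vandermonde), the argument is formal.
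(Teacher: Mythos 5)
Your proof is correct and is essentially the same argument as the paper's, built on the same factorization $a_{\delta_{n+1}}^k = a_{\delta_n}^k\cdot\prod_{i=1}^n(x_i-x_{n+1})^k$; the only difference is which corner of the extra factor you use to lift the witness. The paper appends the maximal power of $x_{n+1}$, mapping $\alpha\mapsto(\alpha,kn)$ and noting $x_{n+1}$ does not appear in $a_{\delta_n}^k$, while you append the zero power, mapping $\alpha\mapsto(\alpha_1+k,\ldots,\alpha_n+k,0)$ via the specialization $x_{n+1}=0$ — two mirror-image lifts with the same effect.
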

\begin{proof}
Suppose $a_{\delta_n}^k$ is not SNP. There exists a lattice point
$\alpha \in {\sf Newton}(a_{\delta_n}^k)$ that is not an 
exponent vector of $a_{\delta_n}^k$. Hence we have a convex
combination 
\[\alpha=\sum_{i=1}^N c_i \beta^i\]
where $\beta^i$ is an exponent vector. For $\gamma\in {\mathbb R}^n$,
let $\gamma'=(\gamma,kn)\in {\mathbb R}^{n+1}$.  Since 
\[a_{\delta_{n+1}}^k=a_{\delta_n}^k \times \prod_{i=1}^n (x_i-x_{n+1})^k,\]
each $(\beta^i)'$ is an exponent vector of $a_{\delta_{n+1}}^k$ and hence
$\alpha'$ is a lattice point of ${\sf Newton}(a_{\delta_{n+1}}^k)$. Since $x^{\alpha'} = x^\alpha x_{n+1}^{kn}$ and 
$x_{n+1}$ does not appear in $a_{\delta_{n}}^k$, if 
$\alpha'$ is an exponent vector of $a_{\delta_{n+1}}^k$, $\alpha$ is an exponent vector of $a_{\delta_n}^k$, a contradiction.  Thus $a_{\delta_{n+1}}^k$ is not SNP.
\end{proof}

\noindent\emph{Proof of Propositions~\ref{prop:vandy}
and~\ref{abel}:}
Clearly, $a_{\delta_n}$ is SNP for $n =1,2$. One checks that $(1,1,1) \in {\sf Newton}(a_{\delta_3})$ but is not an actual exponent vector.

Separately, one checks $\Delta_n$ is SNP for $n\leq 4$.
Also $\Delta_5$ is not SNP. In fact, the only lattice points that
are not exponent vectors are all $5!$ rearrangements of $(1,3,4,5,7)$. 

Now apply Lemma~\ref{prop:higher} to complete an induction argument
for each of the two propositions being proved.\qed

\begin{Conjecture}
For all $k$, there exists $N_k$ such that $a_{\delta_n}^k$ is not SNP for any $n \geq N_k$. 
\end{Conjecture}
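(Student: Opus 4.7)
The plan is to invoke Lemma~\ref{prop:higher} to reduce the problem to producing, for each $k$, a single $N_k$ with $a_{\delta_{N_k}}^k$ not SNP; propagation to all $n \ge N_k$ is then automatic from that lemma's inductive step. I would then split on the parity of $k$.

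For $k$ odd, I take $N_k = 3$. Because $k$ is odd, $a_{\delta_3}^k$ is antisymmetric, so whenever $\alpha_i = \alpha_j$ the transposition $(i\,j)$ gives $[x^{\alpha}]a_{\delta_3}^k = -[x^{\alpha}]a_{\delta_3}^k$, forcing $[x^{\alpha}]a_{\delta_3}^k = 0$. A short direct computation shows all six rearrangements of $k\delta_3 = (2k,k,0)$ appear in the support (the monomial $x_1^{2k}x_2^k$ arises uniquely with coefficient $1$, and antisymmetry supplies the other five), so ${\sf Newton}(a_{\delta_3}^k) \supseteq \mathcal{P}_{(2k,k,0)}$. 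The centroid $(k,k,k) = \frac{1}{6}\sum_{\sigma \in S_3}\sigma\cdot(2k,k,0)$ is an integer point of this permutahedron with repeated coordinates, hence a lattice point of ${\sf Newton}(a_{\delta_3}^k)$ absent from the support. Thus $a_{\delta_3}^k$ is not SNP.

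For $k$ even, $a_{\delta_n}^k$ is symmetric and the parity obstruction above evaporates. A hole must instead come from genuine cancellation in
\[ [x^\alpha]a_{\delta_n}^k = \sum_{\sigma_1(\delta_n) + \cdots + \sigma_k(\delta_n) = \alpha}{\rm sgn}(\sigma_1\cdots\sigma_k). \]
My strategy would be to mimic Proposition~\ref{abel}, which locates the hole $(1,3,4,5,7)\in\mathcal{P}_{2\delta_5}$ for $k=2$, by identifying explicit lattice points whose defining signed sums cancel via a sign-reversing involution on the contributing $k$-tuples. Candidate families of such points can be guessed from small-$n$ experimentation (likely $N_k$ must grow with $k$) and then promoted to a uniform construction.

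The main obstacle is the even case. The odd case collapses to a one-line antisymmetry argument, but for even $k$ there is no comparable structural forcing, and distinguishing vanishing from nonvanishing in the above signed sum is delicate. Backup routes include an Ehrhart-type asymptotic comparison --- the lattice-point count of $\mathcal{P}_{k\delta_n}$ should eventually outpace the number of distinct exponent vectors of $a_{\delta_n}^k$ as $n \to \infty$ with $k$ fixed --- or exploiting the factorization $a_{\delta_n}^k = \Delta_n \cdot a_{\delta_n}^{k-2}$ to inductively transport holes upward from $k=2$ by choosing $N_k$ large enough that the hole $(1,3,4,5,7)$ of $\Delta_5$ propagates through multiplication by $a_{\delta_n}^{k-2}$. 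Turning either into a rigorous proof is where I expect the bulk of the work to lie.
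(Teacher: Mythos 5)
The statement you are trying to prove is stated in the paper as a \emph{Conjecture}, and the authors do not prove it; they only report computations ($N_1=N_3=N_5=N_7=3$ with $(1,3j-2,3j-2)$ as the obstructing lattice point, and $N_2=5,N_4=4,N_6=4,N_8=3$). So you should not expect to find a proof to compare against. That said, your odd-$k$ argument is correct and actually does more than the paper: $a_{\delta_3}^k$ is antisymmetric when $k$ is odd, hence every coefficient at a point with a repeated coordinate vanishes; $(2k,k,0)$ survives with coefficient $1$, so $\mathcal{P}_{(2k,k,0)}={\sf Newton}(a_{\delta_3}^k)$ contains the lattice point $(k,k,k)$, which cannot be in the support. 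Combined with Lemma~\ref{prop:higher}, this cleanly establishes $N_k=3$ for \emph{all} odd $k$, whereas the paper only reports having checked $k\le 7$. (Your point $(k,k,k)$ and the paper's $(1,3j-2,3j-2)$ are both instances of the same antisymmetry obstruction.)

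The even case is a genuine gap, and you correctly flag it as such. None of the three backup routes you sketch is carried to a proof, and at least two have concrete difficulties. The ``transport holes via $a_{\delta_n}^k=\Delta_n\cdot a_{\delta_n}^{k-2}$'' idea does not automatically propagate non-SNPness: if $\alpha$ is a hole of $\Delta_n$ and $\beta$ is a support point of $a_{\delta_n}^{k-2}$, the coefficient at $\alpha+\beta$ can still be nonzero because there may be other decompositions $\alpha+\beta=\alpha'+\beta'$ with $\alpha'$ in the support of $\Delta_n$; indeed your own introduction's example ($f$ SNP but $f^2$ not, and vice versa) shows multiplication interacts unpredictably with SNP. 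Your heuristic that ``likely $N_k$ must grow with $k$'' is contradicted by the paper's data, which reports $N_2=5,N_4=4,N_6=4,N_8=3$ --- i.e.\ $N_k$ appears to \emph{decrease} for even $k$. The Ehrhart-vs-support-count asymptotic idea is plausible as a source of existence (for each fixed $k$, holes must eventually appear as $n\to\infty$) but would require an actual estimate of $\#\{\text{exponent vectors of }a_{\delta_n}^k\}$ against $L_{\mathcal{P}_{k\delta_n}}(1)$, which you have not provided. As it stands, the conjecture remains open for even $k\ge 2$ beyond the paper's finite checks, and your proposal does not close it.
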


More precisely, for $1\leq j\leq 4$ we computed $N_{2j-1}=3$ and moreover that
$(1,3j-2,3j-2)$ is a lattice point that is not an exponent vector. Moreover,
$N_2=5, N_4=4,N_6=4,N_8=3$. For more on (higher) powers of the Vandermonde, see, e.g., \cite{Scharf, Ball}.\qed

\begin{Example}[$q$-discriminant]
The {\bf $q$-discriminant} is $\prod_{1\leq i<j\leq n}(x_i-qx_j)$. At $q=-1$, 
\[f_n=\prod_{1\leq i<j\leq n}(x_i+x_j)\in {\sf Sym}_n.\]
It is known that 
\[f_n=s_{\rho_n}(x_1,x_2,\ldots,x_n)  \text{ \ \ where $\rho_n=(n-1,n-2,\ldots,3,2,1,0)$}.\] 
Hence $f_n$ is SNP and ${\sf Newton}(f_n)={\mathcal P}_{\rho_n}\subset {\mathbb R}^n$.
\qed
\end{Example}

\begin{Example}[Totally nonnegative matrices]
Let 
\[M=(m_{ij})_{1\leq i,j\leq n}\] be an $n\times n$ {\bf totally nonnegative real matrix}. That is, every determinant of a square
submatrix is nonnegative. Define
\[F_M=\sum_{w\in S_n} \left(\prod_{i=1}^{n} m_{i,w(i)}\right) p_{\lambda(w)},\]
where $\lambda(w)$ is the cycle type of $w$. 
\end{Example}

\begin{Theorem}
\label{thm:Stem}
$F_M$ is SNP.
\end{Theorem}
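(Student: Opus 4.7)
The plan is to derive the result as a clean corollary of Proposition~\ref{prop:characterargument}, using as input Stembridge's theorem on immanants of totally nonnegative matrices. The two hypotheses we must verify are that $F_M$ has nonnegative coefficients in the power sum basis and that $F_M$ is Schur positive.

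First I would observe that $F_M$ is already presented in the $p$-basis. Grouping permutations by cycle type,
\[F_M = \sum_{\lambda \vdash n} c_\lambda\, p_\lambda, \qquad c_\lambda = \sum_{\substack{w \in S_n\\ \lambda(w)=\lambda}} \prod_{i=1}^n m_{i,w(i)}.\]
Because a totally nonnegative matrix has nonnegative entries (the $1\times 1$ minors), each product $\prod_i m_{i,w(i)}$ is nonnegative, so $c_\lambda \geq 0$ for every $\lambda \vdash n$. This gives the first hypothesis of Proposition~\ref{prop:characterargument}.

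Next I would pass from the $p$-basis to the $s$-basis using the character formula $p_{\lambda(w)} = \sum_{\nu \vdash n} \chi^\nu(\lambda(w))\, s_\nu$. Substituting and swapping sums,
\[F_M = \sum_{\nu \vdash n} \left(\sum_{w \in S_n} \chi^\nu(w) \prod_{i=1}^n m_{i,w(i)}\right) s_\nu = \sum_{\nu \vdash n} \mathrm{Imm}^\nu(M)\, s_\nu,\]
where $\mathrm{Imm}^\nu(M)$ is the $\nu$-immanant of $M$. Stembridge's theorem \cite{Stembridge} asserts precisely that $\mathrm{Imm}^\nu(M) \geq 0$ whenever $M$ is totally nonnegative, giving Schur positivity.

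If $F_M \equiv 0$ the SNP condition is vacuous, so assume not. Then both hypotheses of Proposition~\ref{prop:characterargument} hold, and that proposition immediately yields that $F_M$ is SNP. The only nontrivial ingredient is Stembridge's immanant positivity theorem; everything else is bookkeeping between the $p$- and $s$-bases. There is no real obstacle beyond correctly citing Stembridge's result, which is the whole reason $F_M$ was introduced in the first place.
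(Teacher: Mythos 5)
Your proof is correct and follows essentially the same route as the paper: both arguments verify the two hypotheses of Proposition~\ref{prop:characterargument} by noting $m_{ij}\geq 0$ gives nonnegativity in the $p$-basis, and by invoking Stembridge's immanant positivity theorem for Schur positivity. The only difference is that you spell out the immanant identification explicitly, which the paper leaves implicit.
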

\begin{proof}
By assumption, $m_{ij}\geq 0$. A theorem of J.~R.~Stembridge \cite{Stembridge} (cf.~\cite[Exercise~7.92]{Stanley}) states that
$F_{M}$ is also Schur positive. Now apply Proposition~\ref{prop:characterargument}.
\end{proof}

\begin{Example}[Redfield--P\'{o}lya theory]
Let $G$ be a subgroup of $S_n$. The {\bf cycle index polynomial} is 
\[Z_G=\frac{1}{|G|}\sum_{g\in G} p_{\lambda(g)},\]
where $\lambda(g)$ is the cycle type of $g$.
\end{Example}

\begin{Theorem}
\label{thm:cycleindex}
$Z_G$ has SNP.\qed
\end{Theorem}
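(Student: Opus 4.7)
The plan is to reduce to Proposition~\ref{prop:characterargument}, which applies verbatim to any symmetric function whose expansion in the power-sum basis has nonnegative coefficients and which is additionally Schur positive. Both hypotheses will be verified directly for $Z_G$, and then the conclusion is immediate.

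First, I would rewrite $Z_G$ by grouping elements of $G$ according to their cycle types: $Z_G = \sum_{\lambda \vdash n} c_\lambda p_\lambda$ where
\[ c_\lambda = \frac{\#\{g \in G : \lambda(g) = \lambda\}}{|G|} \geq 0. \]
Since the identity $e \in G$ contributes to $c_{(1^n)}$, we have $Z_G \not\equiv 0$. This verifies the first hypothesis of Proposition~\ref{prop:characterargument}.

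Next, I would establish Schur positivity of $Z_G$. Using the transition formula $p_\mu = \sum_\lambda \chi^\lambda(\mu) s_\lambda$ and interchanging the order of summation,
\[ Z_G = \frac{1}{|G|}\sum_{g \in G}\sum_\lambda \chi^\lambda(g) s_\lambda = \sum_{\lambda \vdash n} \left(\frac{1}{|G|}\sum_{g \in G} \chi^\lambda(g)\right) s_\lambda. \]
The parenthesized quantity is precisely the inner product $\langle \chi^\lambda|_G, \mathbf{1}_G\rangle_G$, namely the multiplicity of the trivial representation of $G$ in the restriction of the irreducible character $\chi^\lambda$ of $S_n$. This is a nonnegative integer, so $Z_G$ is Schur positive.

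With both hypotheses in hand, Proposition~\ref{prop:characterargument} immediately gives that $Z_G$ is SNP. There is no real obstacle here; the only conceptual point is recognizing that $Z_G$ is (a scalar multiple of) the Frobenius characteristic of the permutation character of $S_n$ acting on cosets of $G$, which makes Schur positivity transparent via classical character theory.
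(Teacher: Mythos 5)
Your proposal is correct and takes essentially the same route as the paper: both reduce the statement to Proposition~\ref{prop:characterargument}. The only difference is that the paper cites \cite[pg.~396]{Stanley} for the Schur positivity of $Z_G$, whereas you supply the short character-theoretic argument (the $s_\lambda$-coefficient is the multiplicity of the trivial representation in $\chi^\lambda\vert_G$) explicitly.
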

\begin{proof}
 It is true that
\[Z_G=\sum_{\lambda} c_{\lambda} s_{\lambda},\]
where each $c_{\lambda}\in{\mathbb Z}_{\geq 0}$; see \cite[pg. 396]{Stanley}: this positivity is known for
representation-theoretic reasons (no combinatorial proof is available). Now use 
Proposition~\ref{prop:characterargument}.
\end{proof}

\begin{Example}[C.~Reutenauer's $q_{\lambda}$ basis]
C.~Reutenauer \cite{Reutenauer} introduced a new basis $\{q_{\lambda}\}$ of symmetric polynomials, recursively defined by
setting
\[\sum_{\lambda\vdash n} q_{\lambda} = s_{(n)},\]
where 
$q_{\lambda}=q_{\lambda_1}q_{\lambda_2}\cdots$. 
\end{Example}

\begin{Theorem}
\label{thm:Reutenauer}
$q_{\lambda}$ has SNP.
\end{Theorem}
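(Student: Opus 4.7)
The plan is to reduce to Proposition~\ref{prop:generalthing}(III) by determining the sign structure of $q_\lambda$ in the Schur basis. First, I would analyze each $q_n$ via the Newton-like recurrence
\[n\, q_n \;=\; p_n \;-\; \sum_{d \mid n,\; d < n} d\, q_d^{n/d},\]
which is equivalent to the defining identity $\sum_{\mu \vdash n} q_\mu = s_{(n)}$. With $q_1 = s_{(1)}$ as the base case, I would prove by induction that for each $n \geq 2$, $-q_n$ is Schur-positive, with $s_{(n-1,1)}$ appearing with coefficient $+1$ and every other nonzero Schur term of shape $\mu \leq_D (n-1,1)$. The dominance bookkeeping is straightforward: by induction, the Schur support of $q_d^{n/d}$ (for $2 \leq d < n$, $d \mid n$) lies in dominance below $((n/d)(d-1),\, n/d) \leq_D (n-1,1)$; the $s_{(n)}$-coefficients of $p_n$ and $p_1^n$ are both $+1$ and so cancel in the recurrence; and the coefficient of $s_{(n-1,1)}$ in $n\,q_n$ works out to $-1 - (n-1) = -n$ (using $\chi^{(n-1,1)}((n)) = -1$ and $f^{(n-1,1)} = n-1$), yielding coefficient $+1$ in $-q_n$.

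Second, for a general partition $\lambda$, write $q_\lambda = \prod_i q_{\lambda_i}$ and set $k = \#\{i : \lambda_i \geq 2\}$. Then
\[(-1)^{k}\, q_\lambda \;=\; \prod_{\lambda_i \geq 2} (-q_{\lambda_i})\cdot\prod_{\lambda_j = 1} q_1\]
is a product of Schur-positive symmetric functions, each with a unique $\leq_D$-maximum Schur term (from Step~1). Adapting the dominance tracking at the end of the proof of Proposition~\ref{prop:productSchur} from single-Schur factors to Schur-positive factors with unique $\leq_D$-max, the product is Schur-positive with unique $\leq_D$-max Schur term $s_\nu$, where $\nu$ is the componentwise sum of the individual dominance maxima. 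Summing the vectors $(\lambda_i - 1, 1)$ (for $\lambda_i \geq 2$) together with $(1)$ (for $\lambda_j = 1$) yields $\nu = (|\lambda| - k,\, k)$.

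Applying Proposition~\ref{prop:generalthing}(III) to $(-1)^k q_\lambda$ then shows that it is SNP with Newton polytope ${\mathcal P}_\nu$; since SNP is invariant under multiplication by $\pm 1$, $q_\lambda$ itself is SNP.

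The main obstacle is the Schur-positivity of $-q_n$ for $n \geq 2$. The recurrence involves summands $q_d^{n/d}$ whose overall Schur sign depends on the parity of $n/d$, so Schur positivity of $-q_n$ is not transparent from the recursion alone and emerges only after substantial cancellation; for this I would invoke Reutenauer's representation-theoretic interpretation of $q_n$ in \cite{Reutenauer}.
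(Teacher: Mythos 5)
Your overall strategy matches the paper's: establish that $-q_n$ is Schur-positive with unique $\leq_D$-maximal term $s_{(n-1,1)}$, deduce via a product argument that $(-1)^{\#\{i:\lambda_i\geq 2\}}q_\lambda$ is Schur-positive with unique $\leq_D$-maximal term $s_{(|\lambda|-k,k)}$, and conclude via Proposition~\ref{prop:generalthing}(III). Where you differ is in how the maximal term is identified: you work directly with the Newton-type recurrence $n\,q_n = p_n - \sum_{d\mid n,\,d<n} d\,q_d^{n/d}$ and track the $s_{(n)}$ and $s_{(n-1,1)}$ coefficients explicitly, whereas the paper strengthens Doran's induction on $-f(n,k)=\sum_{\lambda\vdash n,\,\min\lambda_i\geq k}(-q_\lambda)$. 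Your dominance bookkeeping is clean and the coefficient computation is correct, so the recurrence does give you the unique-max-term claim. The trade-off is that the paper's route obtains Schur positivity and the maximal-term claim in a single strengthened induction riding on Doran's argument, while your route cleanly separates the two and handles only the second.

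There is one genuine flaw in your proposal: you say you would invoke ``Reutenauer's representation-theoretic interpretation'' in \cite{Reutenauer} for the Schur positivity of $-q_n$. Reutenauer only \emph{conjectured} this positivity; it was proved by Doran \cite{Doran}, which is the reference the paper leans on. As you correctly observe, the recurrence by itself does not yield positivity because the summands $q_d^{n/d}$ carry signs depending on the parity of $n/d$ and the conclusion only emerges after cancellation. So your proof has a hole exactly where you flag it — you need Doran's theorem, not Reutenauer's paper, to fill it. With that citation corrected, your argument is complete and is a legitimate alternative to the paper's treatment.
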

\begin{proof}
Reutenauer in \emph{loc. cit.} conjectured that $-q_{(n)}$ is Schur positive for $n\geq 2$.
Indeed,  
\[q_{(1)}=s_{(1)}, q_{(2)}=-s_{(1,1)}, q_{(3)}=-s_{(2,1)}.\] 
Reutenauer's conjecture was
later established by W.~M.~Doran IV \cite{Doran}. The proof sets
\[f(n,k)=\sum_{\lambda\vdash n, {\rm min}(\lambda_i)\geq k} q_{\lambda}.\]
The argument inducts on $n$ and proceeds by showing that 
\[-f(n,k)=s_{(n-1,1)}+\sum_{2\leq i<k} (-f(i,i))(-f(n-i,i)).\]
His induction claim is that $-f(n,k)$ is Schur positive for $k\geq 2$. Let us 
strengthen his induction hypothesis, and
assume $-f(n,k)$ is Schur positive with $s_{(n-1,1)}$ as the unique
$\leq_D$ maximal term. In the induction step, note each 
$s_{\alpha}$ appearing in $-f(i,i)$ has $\alpha_1\leq i-1$ and each
$s_{\beta}$ in $-f(n-i,i)$ has $\beta_1\leq n-i-1$. Thus, 
by the argument of Proposition~\ref{prop:productSchur},
if $s_{\gamma}$ appears in
$s_{\alpha}s_{\beta}$ then $\gamma_1\leq n-2$, implying the strengthening we need.

It follows from the above argument and  the Littlewood-Richardson rule 
that if $\lambda=(\lambda_1, \ldots, \lambda_{\ell},1^r)$ where each $\lambda_{i}\geq 2$ then $q_{\lambda}$ has a unique $\leq_D$-leading term $s_{a,b}$ where $a = |\lambda |-\ell$ and $b = \ell$. Thus, $q_{\lambda}$ has SNP by 
Proposition~\ref{prop:generalthing}(III).
\end{proof}

\begin{Example}[Stanley's chromatic symmetric polynomial]
For a graph $G$, let $c_{G}(x_1,\ldots,x_n)$ be \emph{Stanley's chromatic symmetric polynomial} \cite{Stanley1995}. If $G=K_{1,3}$,
\[c_{G}(x_1,x_2,\ldots)=
x_1^3x_2+x_1 x_2^3+\cdots\] 
is not SNP. \qed
\end{Example}

\begin{Example}[Kronecker product of Schur polynomials]
\label{exa:kron}
The {\bf Kronecker product} is
\[s_{\lambda}*s_{\mu}=\sum_{\nu} {\sf Kron}_{\lambda,\mu}^{\nu} s_{\nu}\in {\sf Sym}.\]
${\sf Kron}_{\lambda,\mu}^{\nu}$ is the {\bf Kronecker coefficient}, 
the multiplicity of the $S_n$-character $\chi^{\nu}$ appearing in $\chi^\lambda\otimes\chi^\mu$. 
We conjecture that $s_{\lambda}*s_{\mu}$ is SNP. We have verified this for
all $\lambda,\mu\in {\sf Par}(n)$ for $1\leq n\leq 7$. Consider 
\[s_{(4,2)}*s_{(2,2,1,1)} 
=s_{(1, 1, 1, 1, 1, 1)} + s_{(2, 1, 1, 1, 1)} + 2s_{(2, 2, 1, 1)} + s_{(3, 1, 1, 1)} 
+ 2s_{(3, 2, 1)} + s_{(3, 3)} + s_{(4, 1, 1)}.\]
Notice $(3,3)$ and $(4,1,1)$ are both $\leq_D$-maximal. Hence in this case, 
SNPness cannot be blamed on Proposition~\ref{prop:generalthing}(III); cf.~\cite[Lemma~3.2]{Vallejo2010} and \cite{Vallejo2000}.
\qed
\end{Example}
\begin{Example}[Lascoux-Leclerc-Thibon (LLT) polynomials]
A.~Lascoux-B.~Leclerc-J.~Y.~Thibon \cite{LLT} introduced 
$G_{\lambda}^{(m)}(X;q)$.
$G_{\lambda}^{(m)}(X;1)$ is a product of Schur polynomials.
Hence $G_{\lambda}^{(m)}(X;1)$ is SNP by Proposition~\ref{prop:productSchur}. 
$G_{\lambda}^{(m)}(X;q)\in {\sf Sym}_n[q]$ is not always SNP. 
One example is 
\[G_{(3,3)}^{(2)}(x_1,x_2;q) = q^3(x_1^3+x_1^2 x_2+x_1 x_2^2
+x_2^3)+q(x_1^2 x_2 +x_2^2 x_1).\]
LLT polynomials arise in the study of Macdonald polynomials, the topic of 
Section~\ref{sec:mac}. (Another related topic
is \emph{affine Schubert calculus}; see the book \cite{kschurbook}.)
\qed
\end{Example}

\section{Macdonald polynomials}
\label{sec:mac}

\subsection{Symmetric and nonsymmetric Macdonald polynomials}
In \cite{Macdonald:newclass}, I.~G.~Macdonald introduced
a family of polynomials depending of parameters $q$ and $t$. Define an inner product 
$\langle\bullet,\bullet\rangle_{q,t}$ on {\sf Sym} by
\[\langle p_{\lambda},p_{\mu}\rangle_{q,t}=\delta_{\lambda,\mu}z_{\lambda}(q,t),\]
where
\[z_{\lambda}(q,t)=z_{\lambda}\prod_{i=1}^{\ell(\lambda)}\frac{1-q^{\lambda}}{1-t^{\lambda}},\]
and 
\[z_{\lambda}=\prod_{r\geq 1}r^{m_r}m_r!\] 
for $\lambda=(1^{m_1}2^{m_2}\cdots)$.
{\bf Macdonald polynomials}
$\{P_{\mu}(X;q,t)\}$ are uniquely determined by 
\begin{equation}
\label{eqn:Macdef1}
P_{\lambda}(X;q,t)=m_{\lambda}(X)+\sum_{\mu<_D\lambda} {\sf c}_{\lambda,\mu}(q,t)m_{\mu}(X)
\end{equation}
where ${\sf c}_{\lambda,\mu}(q,t)\in {\mathbb Q}(q,t)$, together with
\[\langle P_{\lambda},P_{\mu}\rangle_{q,t}=0 \text{\ \ if $\lambda\neq \mu$}.\] 

\begin{Theorem}
\label{prop:PisSNP}
$P_{\lambda}(X;q=q_0,t=t_0)$ is SNP, and ${\sf Newton}(P_{\lambda}(x_1,\ldots,x_n;q=q_0,t=t_0))={\mathcal P}_{\lambda}\subset {\mathbb R}^n$ whenever $n\geq \ell(\lambda)$,
for any $(q_0,t_0)$ in a Zariski open subset of ${\mathbb C}^2$.
\end{Theorem}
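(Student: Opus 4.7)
The plan is to work directly from the monomial-triangular expansion (\ref{eqn:Macdef1}). Fix $n\geq\ell(\lambda)$; then for any $(q_0,t_0)$ at which the coefficients of (\ref{eqn:Macdef1}) are well-defined,
\[P_\lambda(x_1,\ldots,x_n;q_0,t_0)=m_\lambda(x_1,\ldots,x_n)+\sum_{\mu<_D\lambda}c_{\lambda,\mu}(q_0,t_0)\,m_\mu(x_1,\ldots,x_n).\]
By (\ref{eqn:newtonmonomial}) and Rado's theorem (\ref{eqn:containDominance}), each ${\sf Newton}(m_\mu)=\mathcal{P}_\mu\subseteq\mathcal{P}_\lambda$, so ${\sf Newton}(P_\lambda)\subseteq\mathcal{P}_\lambda$; the surviving leading term $m_\lambda$ with coefficient $1$ forces every vertex of $\mathcal{P}_\lambda$ into the Newton polytope, giving equality. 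The remaining task is SNP: every lattice point of $\mathcal{P}_\lambda$ must be the exponent vector of a monomial of $P_\lambda$ with nonzero coefficient.

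The key input is that each $c_{\lambda,\mu}(q,t)\in\mathbb{Q}(q,t)$ is nonzero as a rational function whenever $\mu\leq_D\lambda$. I would obtain this from the classical specialization $P_\lambda(X;q,q)=s_\lambda(X)$ of \cite{Macdonald:newclass}. Setting $q=t$ in (\ref{eqn:Macdef1}) and matching against the Kostka expansion $s_\lambda=\sum_\mu{\sf K}_{\lambda,\mu}m_\mu$ yields $c_{\lambda,\mu}(q,q)={\sf K}_{\lambda,\mu}$, which is strictly positive for all $\mu\leq_D\lambda$ by (\ref{eqn:kostkadominance}). Hence no $c_{\lambda,\mu}$ vanishes identically as a rational function.

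Define
\[U=\{(q_0,t_0)\in\mathbb{C}^2:c_{\lambda,\mu}(q_0,t_0)\text{ is finite and nonzero for every }\mu\leq_D\lambda\}.\]
Only finitely many $\mu\leq_D\lambda$ contribute, and for each, the zero and pole loci of $c_{\lambda,\mu}$ are proper closed subvarieties of $\mathbb{C}^2$, so $U$ is a nonempty Zariski open subset. For $(q_0,t_0)\in U$ and any lattice point $\alpha\in\mathcal{P}_\lambda\cap\mathbb{Z}^n$, let $\lambda(\alpha)$ be the rearrangement of $\alpha$ into a partition; Rado (\ref{eqn:containDominance}) gives $\lambda(\alpha)\leq_D\lambda$, so $m_{\lambda(\alpha)}$ appears in the expansion with nonzero coefficient. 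Since $x^\alpha$ lies in exactly one monomial symmetric polynomial, $[x^\alpha]P_\lambda(x_1,\ldots,x_n;q_0,t_0)=c_{\lambda,\lambda(\alpha)}(q_0,t_0)\neq 0$. Both ${\sf Newton}(P_\lambda)=\mathcal{P}_\lambda$ and SNP hold on the common Zariski open set $U$.

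I expect no serious obstacle. The only nontrivial ingredient is recognizing that the $q=t$ Schur specialization forces the relevant Macdonald monomial coefficients to be nonzero rational functions; the remaining reasoning is the ``dominance + Rado'' framework already used in Proposition~\ref{prop:generalthing}.
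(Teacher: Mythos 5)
Your proof is correct and follows essentially the same route as the paper: both rely on the triangular expansion (\ref{eqn:Macdef1}) plus Rado/Kostka to pin the Newton polytope to $\mathcal{P}_\lambda$, and both establish that each $c_{\lambda,\mu}(q,t)$ is a nonzero rational function by specializing $P_\lambda$ to a Schur polynomial (the paper uses the single point $(q,t)=(0,0)$, you use the line $q=t$), then take the common Zariski open locus where none of the finitely many $c_{\lambda,\mu}$ vanish or have poles.
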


\begin{Lemma} 
\label{lemma:anyeval}
${\sf Newton}(P_{\lambda}(x_1,\ldots,x_n;q=q_0,t=t_0))={\mathcal P}_{\lambda}\subset {\mathbb R}^n$ whenever $n\geq \ell(\lambda)$,  
for any $(q_0,t_0)\in {\mathbb C}^2$. 
\end{Lemma}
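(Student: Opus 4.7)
The plan is to read the Newton polytope directly off the triangular expansion~(\ref{eqn:Macdef1}). Assume first that the specialization is well-defined at $(q_0,t_0)$, i.e., none of the rational coefficients ${\sf c}_{\lambda,\mu}(q,t)$ has a pole there. Then
\[P_{\lambda}(x_1,\ldots,x_n;q=q_0,t=t_0)=m_{\lambda}(x_1,\ldots,x_n)+\sum_{\mu<_D\lambda}{\sf c}_{\lambda,\mu}(q_0,t_0)\, m_{\mu}(x_1,\ldots,x_n).\]
Using ${\sf Newton}(f+g)={\sf conv}({\sf Newton}(f)\cup{\sf Newton}(g))$, together with (\ref{eqn:newtonmonomial}) and Rado's theorem (\ref{eqn:containDominance}), the right-hand side has Newton polytope contained in ${\sf conv}\left(\bigcup_{\mu\leq_D\lambda}{\mathcal P}_{\mu}\right)={\mathcal P}_{\lambda}$. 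This gives one containment for free.

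For the reverse inclusion, I would show every vertex of ${\mathcal P}_{\lambda}$ is an actual exponent vector of $P_{\lambda}(x_1,\ldots,x_n;q_0,t_0)$. By Proposition~\ref{prop:generalthing}(II), the vertices of ${\mathcal P}_{\lambda}$ are precisely the $S_n$-rearrangements $\beta$ of $\lambda$ (which live in ${\mathbb R}^n$ since $n\geq\ell(\lambda)$). Such a $\beta$ appears with coefficient $1$ in $m_{\lambda}$, while it cannot appear in any $m_{\mu}$ with $\mu<_D\lambda$, because $m_{\mu}$ is supported on rearrangements of $\mu$ and a rearrangement of $\mu\neq\lambda$ cannot equal a rearrangement of $\lambda$. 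Hence $[x^{\beta}]P_{\lambda}(x_1,\ldots,x_n;q_0,t_0)=1$, independent of the specialization point. Every vertex of ${\mathcal P}_{\lambda}$ therefore lies in ${\sf Newton}(P_{\lambda}(x_1,\ldots,x_n;q_0,t_0))$, and combining with the previous paragraph yields equality.

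The main (and essentially only) obstacle is the interpretation at exceptional $(q_0,t_0)\in {\mathbb C}^2$ where some coefficient ${\sf c}_{\lambda,\mu}(q,t)$ has a pole. There the specialization must be defined more carefully, either by passing to an alternative (e.g., combinatorial) formulation of $P_{\lambda}$, or by grouping terms whose poles cancel. The crucial point is that any such definition must preserve the normalization $[x^{\lambda}]P_{\lambda}=1$ from (\ref{eqn:Macdef1}), so the argument of the previous paragraph carries through verbatim: the vertex monomials are forced to appear with coefficient $1$, and all other monomials are supported inside ${\mathcal P}_{\lambda}$. Note that this lemma is strictly weaker than Theorem~\ref{prop:PisSNP}, which additionally claims SNP on a Zariski open subset; here we only need the Newton polytope equality, which holds uniformly in $(q_0,t_0)$.
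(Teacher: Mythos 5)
Your proof is correct and takes essentially the same route as the paper, which simply cites Proposition~\ref{prop:generalthing}(I) together with the triangular expansion~(\ref{eqn:Macdef1}); you have merely unpacked that citation into an explicit two-containment argument, using Rado's theorem~(\ref{eqn:containDominance}) for the inclusion ${\sf Newton}(P_\lambda)\subseteq\mathcal{P}_\lambda$ and the unitriangularity (leading coefficient $1$ on $m_\lambda$, which cannot be cancelled by any $m_\mu$ with $\mu\ne\lambda$) to recover all vertices. Your caveat about poles of the rational coefficients ${\sf c}_{\lambda,\mu}(q,t)$ is a fair observation that the paper does not address --- these coefficients generally do have nontrivial denominators, so the statement ``for any $(q_0,t_0)\in\mathbb{C}^2$'' is implicitly restricted to points where the specialization is defined, just as you note.
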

\begin{proof}
This is by (\ref{eqn:Macdef1}) and Proposition~\ref{prop:generalthing}(I).
Since $n\geq \ell(\lambda)$, $s_{\lambda}(x_1,\ldots,x_n)
\not\equiv 0$.
\end{proof}

\begin{Lemma}
\label{lemma:maceval}
Fix $q_0,t_0\in {\mathbb C}$.
$P_{\lambda}(X;q=q_0,t=t_0)$ is SNP if and only if ${\sf c}_{\lambda,\mu}(q_0,t_0)\neq 
0$ for all $\mu<_D\lambda$.
\end{Lemma}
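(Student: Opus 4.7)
The plan is to directly unpack both directions of the iff using the monomial expansion (\ref{eqn:Macdef1}) together with Lemma~\ref{lemma:anyeval} and Rado's theorem (\ref{eqn:containDominance}). The key observation is that the coefficient of any given monomial $x^\alpha$ in $P_\lambda(X;q=q_0,t=t_0)$ is completely determined by the partition $\mu=\lambda(\alpha)$ obtained by sorting $\alpha$, because each $m_\nu$ contains $x^\alpha$ precisely when $\nu=\lambda(\alpha)$, and then with coefficient~$1$. So for a lattice point $\alpha$ the coefficient $[x^\alpha]P_\lambda(X;q_0,t_0)$ equals ${\sf c}_{\lambda,\mu}(q_0,t_0)$ (using the convention ${\sf c}_{\lambda,\lambda}=1$).

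First I would dispose of the variable count. For $n<\ell(\lambda)$, every $\mu$ with $\mu\leq_D\lambda$ satisfies $\ell(\mu)\geq\ell(\lambda)>n$, so $m_\mu(x_1,\ldots,x_n)\equiv 0$ and hence $P_\lambda(x_1,\ldots,x_n;q_0,t_0)\equiv 0$, which is vacuously SNP. Thus SNPness of $P_\lambda(X;q=q_0,t=t_0)$ is equivalent to SNPness of the specializations in $n\geq\ell(\lambda)$ variables, and by Lemma~\ref{lemma:anyeval} the Newton polytope is ${\mathcal P}_\lambda\subset{\mathbb R}^n$ there.

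Now for the $(\Rightarrow)$ direction, fix $\mu<_D\lambda$ and work in $n\geq\ell(\lambda)$ variables. Then $\mu\in{\mathcal P}_\mu\subseteq{\mathcal P}_\lambda$ by (\ref{eqn:containDominance}), so $\mu$ is a lattice point of ${\sf Newton}(P_\lambda(x_1,\ldots,x_n;q_0,t_0))$; by SNPness it must be an exponent vector, and by the observation above its coefficient equals ${\sf c}_{\lambda,\mu}(q_0,t_0)$, which is therefore nonzero. For $(\Leftarrow)$, let $\alpha$ be any lattice point of ${\mathcal P}_\lambda$ and set $\mu=\lambda(\alpha)$. By symmetry of ${\mathcal P}_\lambda$ and Rado's theorem, $\mu\leq_D\lambda$, so ${\sf c}_{\lambda,\mu}(q_0,t_0)\neq 0$ (this is $1$ if $\mu=\lambda$ and nonzero by hypothesis otherwise). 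Thus $x^\alpha$ appears in $P_\lambda(x_1,\ldots,x_n;q_0,t_0)$, proving SNPness.

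There is no real obstacle here beyond book-keeping; the argument is a structural one, essentially the same kind of reasoning as in Proposition~\ref{prop:generalthing}(III), specialized to the fact that $P_\lambda$ has $\leq_D$-unitriangular expansion on the monomial basis so every lattice-point coefficient is a single ${\sf c}_{\lambda,\mu}(q_0,t_0)$ rather than a sum.
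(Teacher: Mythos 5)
Your proof is correct and follows essentially the same route as the paper's: both directions rest on Lemma~\ref{lemma:anyeval} (so the Newton polytope is ${\mathcal P}_\lambda$), the unitriangular monomial expansion (\ref{eqn:Macdef1}), the fact that $x^\alpha$ occurs among the $m_\nu$ only in $m_{\lambda(\alpha)}$, and Rado's theorem. The only difference is cosmetic — you make the ${\sf c}_{\lambda,\lambda}=1$ convention and the $n<\ell(\lambda)$ degenerate case explicit, which the paper leaves implicit.
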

\begin{proof}
($\Rightarrow$) By Lemma~\ref{lemma:anyeval}, 
${\sf Newton}(P_{\lambda}(X;q=q_0,t=t_0))={\mathcal P}_{\lambda}$.
Thus each $\mu<_D\lambda$ appears as a lattice
point of ${\sf Newton}(P_{\lambda}(X;q=q_0,t=t_0))$. 
Since we assume $P_{\lambda}(X;q=q_0,t=t_0)$ is SNP, 
$[x^{\mu}]P_{\lambda}(X;q=q_0,t=t_0)\neq 0$. 
Among monomial symmetric functions, $x^\mu$ only appears in $m_{\mu}$. 
Hence ${\sf c}_{\lambda,\mu}(q_0,t_0)\neq 0$, as desired. 

The proof of $(\Leftarrow)$ just reverses the above argument, using the
fact that 
\[\mu\in{\sf Newton}(P_{\lambda}(X;q=q_0,t=t_0))\iff
\alpha\in {\sf Newton}(P_{\lambda}(X;q=q_0,t=t_0))\]
for any rearrangement $\alpha$ of $\mu\in {\mathbb R}^n$.
\end{proof}

\noindent
\emph{Proof of Theorem~\ref{prop:PisSNP}:}
The Newton polytope assertion is by Lemma~\ref{lemma:anyeval}.
Now 
\[P_{\lambda}(X;0,0)=s_{\lambda}(X)\] 
and $m_{\mu}$
appears in $s_{\lambda}$ for every $\mu<_D\lambda$. 
Hence ${\sf c}_{\lambda,\mu}(q,t)\not\equiv 0$.
Now choose $q,t$ that is neither a pole nor a root
of any of these rational functions (for $\mu<_D\lambda$).  Therefore the SNP assertion follows from Lemma~\ref{lemma:maceval}.\qed

The {\bf Hall-Littlewood polynomial} is $P_{\lambda}(X;t):=P_{\lambda}(X,q=0,t)$. One has
\[P_{\lambda}(X;t)=\sum_{\mu} {\sf K}_{\lambda,\mu}(t)s_{\mu}(X)\]
where ${\sf K}_{\lambda,\mu}(t)$ is the {\bf Kostka-Foulkes polynomial}. It is known that
\[{\sf K}_{\lambda,\mu}(t)=\sum_{T}t^{{\rm charge}(T)}.\]
The sum is over all semistandard tableau $T$ of shape $\lambda$ and content $\mu$. It is true that ${\rm charge}(T)\in {\mathbb Z}_{\geq 0}$. 
Since these tableaux can only occur
if $\mu\leq_D \lambda$, ${\sf K}_{\lambda,\mu}(t)\not\equiv 0$ 
if and only if $\mu\leq_D\lambda$. Hence we immediately obtain:
\begin{Proposition}
\label{prop:HallisSNP}
If $t_0>0$ then $P_{\lambda}(X;t=t_0)\in {\sf Sym}$ is SNP and 
${\sf Newton}(P_{\lambda}(x_1,\ldots,x_n;t=t_0)={\mathcal P}_{\lambda}\subset {\mathbb R}^n$ whenever $n\geq \ell(\lambda)$.
\end{Proposition}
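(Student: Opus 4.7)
The plan is to reduce directly to Proposition~\ref{prop:generalthing} by showing that for $t_0>0$, the expansion $P_\lambda(X;t=t_0)=\sum_\mu {\sf K}_{\lambda,\mu}(t_0)\, s_\mu(X)$ satisfies all three hypotheses of that proposition: Schur positivity, existence of a $\leq_D$-maximal term, and that only $\mu\leq_D\lambda$ contribute.

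First, I would observe that since ${\rm charge}(T)\in\mathbb{Z}_{\geq 0}$ and $t_0>0$, every summand $t_0^{{\rm charge}(T)}$ is strictly positive. Hence ${\sf K}_{\lambda,\mu}(t_0)\geq 0$, with strict inequality exactly when there exists a semistandard tableau of shape $\lambda$ and content $\mu$; by the classical characterization this holds iff $\mu\leq_D\lambda$. This gives Schur positivity and shows the Schur expansion is supported on $\{\mu : \mu\leq_D\lambda\}$.

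Next I would note that ${\sf K}_{\lambda,\lambda}(t)=1$: the only semistandard tableau of shape $\lambda$ with content $\lambda$ is the one whose $i$th row is filled with $i$'s, and this tableau has charge $0$. Therefore $c_\lambda = {\sf K}_{\lambda,\lambda}(t_0)=1\neq 0$, providing the required unique $\leq_D$-maximal term.

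With all three hypotheses of Proposition~\ref{prop:generalthing} verified, part (III) yields that $P_\lambda(X;t=t_0)$ is SNP, and part (I) yields ${\sf Newton}(P_{\lambda}(x_1,\ldots,x_n;t=t_0))={\mathcal P}_{\lambda}\subset{\mathbb R}^n$ whenever $n\geq\ell(\lambda)$ (the nonvanishing clause of Proposition~\ref{prop:generalthing} handling the dimension condition). There is no real obstacle here; the only subtlety is the appeal to ${\sf K}_{\lambda,\lambda}(t)=1$, which is immediate from the charge formula but worth stating explicitly so the reduction to Proposition~\ref{prop:generalthing}(III) is completely transparent.
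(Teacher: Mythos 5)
Your argument mirrors the paper's own (implicit) proof step for step: expand $P_\lambda(X;t_0)$ in the Schur basis with Kostka--Foulkes coefficients, use the charge formula to get nonnegativity and the dominance support, note ${\sf K}_{\lambda,\lambda}(t)=1$, and invoke Proposition~\ref{prop:generalthing}(I) and (III). So in that sense the approach is identical to the paper's.

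However, there is a genuine gap, and it is inherited from the paper: the identity $P_\lambda(X;t)=\sum_\mu {\sf K}_{\lambda,\mu}(t)\,s_\mu(X)$ is stated backwards. The Kostka--Foulkes polynomials express Schur functions in terms of Hall--Littlewood $P$'s, namely $s_\lambda(X)=\sum_\mu {\sf K}_{\lambda,\mu}(t)\,P_\mu(X;t)$, not the reverse. Inverting this unitriangular system produces a Schur expansion of $P_\lambda$ with coefficients of mixed sign; for instance $P_{(2)}(X;t)=s_{(2)}-t\,s_{(1,1)}$. Consequently the Schur-positivity hypothesis of Proposition~\ref{prop:generalthing}(III) is not actually available, and the SNP conclusion does not follow by this route. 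In fact the statement itself fails at $t_0=1$: $P_\lambda(X;t=1)=m_\lambda(X)$, which is not SNP unless $\lambda=(1^n)$ (e.g.\ $P_{(2)}(x_1,x_2;1)=x_1^2+x_2^2$ misses $x_1x_2$). The Newton-polytope identity ${\sf Newton}(P_\lambda(x_1,\ldots,x_n;t=t_0))={\mathcal P}_\lambda$ is unaffected --- it already follows from Lemma~\ref{lemma:anyeval} for every $t_0$ --- but the SNP claim needs either an extra hypothesis (e.g.\ $t_0\neq 1$, together with an argument that the $m$-expansion coefficients ${\sf c}_{\lambda,\mu}(0,t_0)$ do not vanish, via Lemma~\ref{lemma:maceval}) or should be restated for the dual/modified Hall--Littlewood functions $Q'_\lambda(X;t)=\sum_\mu {\sf K}_{\mu,\lambda}(t)\,s_\mu(X)$, for which the Schur-positivity argument you give does go through.
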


The {\bf Schur $P-$ polynomial} is
\[SP_{\lambda}(X)=\sum_{T} x^{T}.\]
The sum is over shifted semistandard Young tableaux of
a partition $\lambda$ with distinct parts.
There is also the {\bf Schur $Q-$ polynomial}, 
\[SQ_{\lambda}(X)=2^{\ell(\lambda)} SP_{\lambda}.\]

\begin{Proposition}
$SP_{\lambda}(X)$ and $SQ_{\lambda}(S)$  are SNP and 
\[{\sf Newton}(SP_{\lambda}(X))={\sf Newton}(SQ_{\lambda}(X))={\mathcal P}_{\lambda}.\]
\end{Proposition}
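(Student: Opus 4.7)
The plan is to reduce everything to the $SP_\lambda$ case and then invoke Proposition~\ref{prop:generalthing}(III). Since $SQ_\lambda(X)=2^{\ell(\lambda)}SP_\lambda(X)$, the two polynomials are scalar multiples of each other; they share the same support, the same Newton polytope, and SNPness is equivalent between them. So the entire content of the statement reduces to verifying the claim for $SP_\lambda(X)$.

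For $SP_\lambda$, I would use the monomial expansion coming from its tableau definition:
\[SP_\lambda(X)=\sum_{\mu} K^P_{\lambda,\mu}\, m_\mu(X),\]
where $K^P_{\lambda,\mu}$ counts shifted semistandard Young tableaux of (strict) shape $\lambda$ with content $\mu$. The two facts needed here, both classical in the theory of shifted tableaux (Stembridge, Sagan, Worley), are: (i) $K^P_{\lambda,\lambda}=1$, realized by the unique superstandard filling that puts $i$'s in the $i$th row; and (ii) $K^P_{\lambda,\mu}>0$ if and only if $\mu\leq_D \lambda$. Fact (ii) is the shifted analogue of~(\ref{eqn:kostkadominance}); the forward direction is the standard row-counting argument, and the reverse follows from explicit constructions of shifted tableaux with prescribed dominated content.

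With these in hand, the rest of the argument is cosmetic and parallels the proof of Proposition~\ref{prop:generalthing}. Applying (\ref{eqn:newtonmonomial}) together with Rado's theorem (\ref{eqn:containDominance}) yields
\[{\sf Newton}(SP_\lambda)={\rm conv}\!\left(\bigcup_{\mu\leq_D\lambda}{\mathcal P}_\mu\right)={\mathcal P}_\lambda.\]
For SNP, any lattice point $\alpha\in{\mathcal P}_\lambda$ rearranges to a partition $\lambda(\alpha)\leq_D\lambda$ (again by Rado), hence $K^P_{\lambda,\lambda(\alpha)}>0$, and since $x^\alpha$ appears in $m_{\lambda(\alpha)}$ it appears in $SP_\lambda$.

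The main (and essentially the only nontrivial) obstacle is establishing fact (ii) above, the dominance characterization of nonzero shifted Kostka numbers; everything else is bookkeeping that has already been done in Proposition~\ref{prop:generalthing}(III). In practice this obstacle is not serious: one either cites the literature on shifted tableaux, or observes that $SP_\lambda$ is Schur-positive with $s_\lambda$ as its unique $\leq_D$-maximal term (so Proposition~\ref{prop:generalthing}(III) applies directly), which is also standard.
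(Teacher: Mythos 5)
Your proposal is correct, and the reduction of $SQ_\lambda$ to $SP_\lambda$ via the scalar $2^{\ell(\lambda)}$ is exactly the right first move (the paper leaves it implicit). Your main route, however, differs from the paper's. You work directly from the shifted-tableau definition and rest the argument on the shifted Kostka characterization $K^P_{\lambda,\mu}>0\iff\mu\leq_D\lambda$, which you correctly flag as the one nontrivial input. The paper sidesteps any discussion of shifted tableau combinatorics: it invokes $SP_\lambda(X)=P_\lambda(X;t=-1)$, so that the monomial triangularity $SP_\lambda=m_\lambda+\sum_{\mu<_D\lambda}(\cdots)m_\mu$ is inherited immediately from (\ref{eqn:Macdef1}), notes $K_{\lambda,\lambda}(t)=1$ to guarantee the $s_\lambda$ term survives the specialization $t=-1$, cites Stembridge for Schur positivity, and then applies Proposition~\ref{prop:generalthing}(III). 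That is precisely the ``alternative'' you mention in your last sentence, so the two approaches converge there. The trade-off: your main line is more hands-on and self-contained in spirit but leans on a shifted-Kostka dominance fact that you would have to prove or locate, whereas the paper's Hall--Littlewood specialization buys the $m$-triangularity for free from machinery already set up in Section~\ref{sec:mac}, and thus fits the paper's template (triangularity plus positivity implies SNP) with essentially no extra work.
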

\begin{proof}
In fact,
\[SP_{\lambda}(X)=P_{\lambda}(X;t=-1);\]
see \cite{Stembridge1989}. Also $K_{\lambda,\lambda}(t)=1$. 
Now, $SP_{\lambda}$ is Schur-positive; see, e.g., 
\cite[p.~131--132]{Stembridge1989}. Thus the result follows from
Proposition~\ref{prop:generalthing}(III).
\end{proof}

The \emph{modified Macdonald polynomial} 
${\widetilde H}_{\lambda}(X;q,t)$ is a certain transformation of
$P_{\lambda}(X;q,t)$ also introduced in
\cite{Macdonald:newclass}.

\begin{Proposition}
\label{prop:modifiedisSNP}
For any $q_0,t_0> 0$, ${\widetilde H}_{\lambda}(X;q=q_0,t=t_0)$ is SNP and 
${\sf Newton}({\widetilde H}_{\lambda}(x_1,\ldots,x_n;q=q_0,t=t_0))={\mathcal P}_{|\lambda|}\subset {\mathbb R}^n$ whenever $n\geq |\lambda|$.
\end{Proposition}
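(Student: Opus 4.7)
The plan is to invoke the combinatorial monomial formula of Haglund, Haiman, and Loehr,
\[ \widetilde{H}_\lambda(X; q, t) \,=\, \sum_{\sigma} q^{\operatorname{inv}(\sigma)} \, t^{\operatorname{maj}(\sigma)} \, x^{\sigma}, \]
where $\sigma$ ranges over all fillings of the Young diagram of $\lambda$ by positive integers, $\operatorname{inv}$ and $\operatorname{maj}$ are the associated statistics, and $x^\sigma$ is the monomial recording the content of $\sigma$. From this formula, for every $\mu \in \mathbb{Z}_{\geq 0}^n$ the coefficient $[x^\mu]\widetilde{H}_\lambda(X; q, t)$ lies in $\mathbb{Z}_{\geq 0}[q, t]$; moreover, whenever $\sum_i \mu_i = |\lambda|$ and $n \geq |\lambda|$ there is at least one filling of $\lambda$ with content $\mu$ (place $\mu_i$ copies of $i$ into any $\mu_i$ cells), so this coefficient polynomial is a \emph{nonzero} element of $\mathbb{Z}_{\geq 0}[q, t]$.

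I would then use that any nonzero polynomial with nonnegative integer coefficients takes a strictly positive value at any $(q_0, t_0)$ with $q_0, t_0 > 0$, which gives $[x^\mu]\widetilde{H}_\lambda(X; q = q_0, t = t_0) > 0$ for every such $\mu$. Since $\widetilde{H}_\lambda$ is homogeneous of degree $|\lambda|$, ${\sf Newton}(\widetilde{H}_\lambda(x_1, \ldots, x_n; q_0, t_0))$ is automatically contained in the simplex
\[\mathcal{P}_{|\lambda|} \,=\, \{(a_1, \ldots, a_n) \in \mathbb{R}_{\geq 0}^n : \textstyle\sum_i a_i = |\lambda|\} \subset \mathbb{R}^n,\]
and the preceding sentence shows every lattice point of this simplex is an exponent vector. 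Hence the Newton polytope equals $\mathcal{P}_{|\lambda|}$ and SNP follows immediately; for specializations in fewer than $|\lambda|$ variables, SNP is handled by Proposition~\ref{prop:stability} since $\widetilde{H}_\lambda$ is symmetric of degree $|\lambda|$.

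The main obstacle is the appeal to the HHL formula: combinatorial monomial-positivity of $\widetilde{H}_\lambda$ over $\mathbb{Z}_{\geq 0}[q, t]$ is the one genuinely nontrivial input. Once that is in hand the rest is routine: nonzero elements of $\mathbb{Z}_{\geq 0}[q, t]$ are strictly positive on the open positive quadrant, and lattice points of $\mathcal{P}_{|\lambda|}$ are exactly the degree-$|\lambda|$ monomial exponent vectors in $n$ variables.
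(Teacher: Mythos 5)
Your proof is correct and follows essentially the same route as the paper: invoke the HHL combinatorial monomial formula for $\widetilde H_\lambda$, observe that the $x^\mu$-coefficient is a nonzero element of $\mathbb{Z}_{\geq 0}[q,t]$ for every $\mu$ of degree $|\lambda|$, and conclude that at any $q_0,t_0>0$ every degree-$|\lambda|$ monomial appears, so the Newton polytope is the dilated simplex $\mathcal P_{|\lambda|}$. Your explicit appeal to Proposition~\ref{prop:stability} for $m<|\lambda|$ variables spells out a step the paper leaves implicit, but the substance is identical.
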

\begin{proof}
A formula of J.~Hagland-M.~Haiman-N.~Loehr \cite{HHL:symm} states that
\[{\widetilde H}_{\lambda}(X;q,t)=\sum_{\sigma:\lambda\to {\mathbb Z}_{>0}} x^{\sigma}q^{{\rm inv}(\sigma)}t^{{\rm maj}(\sigma)},\]
where $\sigma$ is any assignment of positive integers to the boxes of
$\lambda$. Also, ${\rm inv}(\sigma)$ and ${\rm maj}(\sigma)$ are certain combinatorially defined
statistics, whose specifics we do not need here. Thus, for $q,t>0$, every monomial of degree $|\lambda|$ appears.
\end{proof}

However, ${\widetilde H}_{(3,1,1)}(x_1,x_2,x_3,x_4,x_5;q,t)$ is not SNP as it misses the monomial $qtx_3x_4^4$.

\begin{Example}[modified $q,t$-Kostka polynomials are not SNP]
Consider the expansion
\[{\widetilde H}_{\lambda}(X;q,t)=\sum_{\mu} {\widetilde {\sf K}}_{\lambda,\mu}(q,t)s_{\mu}(X).\]
The coefficients ${\widetilde {\sf K}}_{\lambda,\mu}(q,t)$ are the {\bf (modified) $q,t$-Kostka coefficients}.
Now,
\[{\widetilde {\sf K}}_{(2,2,2),(2,1,1,1)}(q,t)=qt^7+t^8+qt^5+t^6+qt^4.\] 
Hence, ${\widetilde {\sf K}}_{\lambda,\mu}(q,t)$ need not be SNP.\qed
\end{Example}

Let $\alpha\in {\mathbb Z}_{\geq 0}^n$.
There is the {\bf nonsymmetric Macdonald polynomial}
$E_{\alpha}(x_1,\ldots,x_n;q,t)$; see \cite{HHL:nonsymm} for details. 

It is part of a definition that 
\[E_{\alpha}(X;q,t)=x^{\alpha}+
\sum_{\beta<_{S}\alpha} {\sf d}_{\alpha,\beta}(q,t)x^\beta\] 
where
${\sf d}_{\alpha,\beta}(q,t)\in {\mathbb Q}(q,t)$. S.~Sahi
\cite{Sahi} proved each ${\sf d}_{\alpha,\beta}(q,t)\not\equiv 0$. Here
$<_{S}$ is the ordering whose covering relations are that if 
$\alpha_i<\alpha_j$
then $t_{ij}(\alpha)<_S \alpha$ (where $t_{ij}(\alpha)$ swaps positions $i$ and $j$ of $\alpha$). If also $\alpha_j-\alpha_i>1$ then 
$\alpha+e_i-e_j<_S t_{ij}(\alpha)$; see
\cite[Section~2.1]{HHL:nonsymm}. Let ${\widehat {\mathcal P}_{\alpha}}$
be the convex hull of all $\beta\in {\mathbb Z}_{\geq 0}^n$ such that
$\beta\leq_S \alpha$.
Thus ${\widehat {\mathcal P}_{\alpha}}$ is the Newton polytope of 
$E_{\alpha}(X;q=q_0,t=t_0)$ for any generic choice of 
$(q_0,t_0)\in {\mathbb C}^2$. 
The conjecture below says $E_{\alpha}(X;q,t)$ is ``generically SNP'':

\begin{Conjecture}
\label{conj:genericnonsymm}
If $\beta\in {\widehat{\mathcal P}_{\alpha}}$ and $\beta\in {\mathbb Z}_{\geq 0}^n$ then $\beta\leq_S \alpha$.
\end{Conjecture}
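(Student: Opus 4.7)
The plan is to attack Conjecture~\ref{conj:genericnonsymm} by first giving $\widehat{\mathcal P}_\alpha$ a polytope description analogous to the Schubitope of Section~\ref{sec:schub}, and then reading off saturation directly from that description. As a preliminary step, I would verify that $\leq_S$ is compatible with dominance: both covering relations send $\alpha$ to a composition $\gamma$ whose sorted partition $\lambda(\gamma)$ is either equal to $\lambda(\alpha)$ (under a swap $t_{ij}$) or a single $\leq_D$-cover below it (under $\alpha + e_i - e_j$, which moves two unequal parts one step closer together). Transitively, $\beta \leq_S \alpha$ forces $\lambda(\beta) \leq_D \lambda(\alpha)$, so $\widehat{\mathcal P}_\alpha \subseteq \mathcal P_{\lambda(\alpha)}$, and by R.~Rado's theorem (\ref{eqn:containDominance}) any lattice point $\beta \in \widehat{\mathcal P}_\alpha$ at least satisfies $\lambda(\beta) \leq_D \lambda(\alpha)$.

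The heart of the argument is to upgrade $\lambda(\beta) \leq_D \lambda(\alpha)$ to $\beta \leq_S \alpha$. My plan is to propose and prove a partial-sum characterization: $\beta \leq_S \alpha$ if and only if $|\beta|=|\alpha|$ and
\[\sum_{i \in S} \beta_i \;\leq\; \phi_\alpha(S) \text{\quad for every $S \subseteq [n]$,}\]
where $\phi_\alpha(S)$ is an explicit combinatorial statistic depending on how the entries $\alpha_i$ for $i\in S$ interact with those for $i \notin S$. The definition should specialize to ordinary partial sums when $\alpha$ is weakly decreasing and $S=\{1,\ldots,k\}$, and both covering relations for $\leq_S$ should manifestly preserve each such inequality. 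Granted this characterization, the conjecture is immediate: each generator $\gamma^{(i)}\leq_S\alpha$ of the convex hull obeys the inequalities, so by linearity the lattice point $\beta$ does too, and the characterization then delivers $\beta \leq_S \alpha$.

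The main obstacle is pinning down the correct statistic $\phi_\alpha(S)$. The Schubitope construction for skyline diagrams in Section~\ref{sec:schub} is a natural guide, since key polynomials are the $(\infty,\infty)$-specialization of nonsymmetric Macdonald polynomials, so an adaptation of the word-reading rule $\theta_{\sf D}(S)$ to the skyline diagram of the composition $\alpha$ seems promising. A secondary --- but I expect more tractable --- step is the converse direction of the characterization: given $\beta \in \mathbb Z_{\geq 0}^n$ satisfying all the inequalities, produce an explicit chain of $<_S$-covers from $\alpha$ down to $\beta$ by locating at each stage a pair of positions to which a swap or a $+e_i-e_j$ move applies without violating any constraint. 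If this program stalls, a fallback is a direct induction on the rank of $\alpha$ in $\leq_S$ using the recursion $\widehat{\mathcal P}_\alpha = \mathrm{conv}(\{\alpha\} \cup \bigcup_{\alpha' \lessdot_S \alpha}\widehat{\mathcal P}_{\alpha'})$ and an analysis of how a lattice point $\beta$ decomposes against the face of $\widehat{\mathcal P}_\alpha$ opposite $\alpha$.
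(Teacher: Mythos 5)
This statement is an open conjecture in the paper; the authors verify it only computationally for $n\leq 7$ and $|\alpha|\leq 7$, so there is no proof to compare against, and your proposal is not one either. Your preliminary step is sound: both covering relations of $<_S$ are compatible with dominance on sorted rearrangements (a transposition preserves $\lambda(\alpha)$, and $\alpha\mapsto\alpha+e_i-e_j$ with $\alpha_j-\alpha_i>1$ produces a strict $\leq_D$-predecessor of $\lambda(\alpha)$), so indeed $\widehat{\mathcal P}_\alpha\subseteq{\mathcal P}_{\lambda(\alpha)}$ and Rado's theorem gives $\lambda(\beta)\leq_D\lambda(\alpha)$ for any lattice point $\beta\in\widehat{\mathcal P}_\alpha$.

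The genuine gap is the one you flag yourself: you never produce the statistic $\phi_\alpha(S)$, nor prove the claimed equivalence that $\beta\leq_S\alpha$ holds if and only if $|\beta|=|\alpha|$ and $\sum_{i\in S}\beta_i\leq\phi_\alpha(S)$ for every $S\subseteq[n]$. That biconditional is the entire mathematical content of the conjecture: once you have a halfspace description of $\widehat{\mathcal P}_\alpha$ together with a saturation lemma (your ``converse direction''), SNPness follows by linearity, as you note, so the plan without the lemma is a restatement rather than an argument. Worse, the candidate you point to --- adapting the Schubitope statistic $\theta_{\sf D}(S)$ for skyline diagrams --- is itself only conjectural even in the key-polynomial case (Conjecture~\ref{conj:keytopeineq}, for which only the ``$\supseteq$'' direction is established per Remark~\ref{remark:keytopehalftrue}), and it governs the $q=t=\infty$ specialization rather than the full $\leq_S$ order, which is strictly finer: you yourself record that $11<_S 20$ while $11\not<_\kappa 20$. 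So the proposed reduction replaces one open conjecture by another. The real difficulty is upgrading $\lambda(\beta)\leq_D\lambda(\alpha)$ to the position-sensitive $\beta\leq_S\alpha$, and that step remains unaddressed.
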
 
Conjecture~\ref{conj:genericnonsymm} has been checked for $n\leq 7$ and whenever $|\alpha|\leq 7$.

\subsection{Keys and Demazure atoms}
\label{subsection:keysandatoms}
Complementing the above analysis, 
we now investigate SNP for two specializations of $E_{\alpha}(X;q,t)$.
The first is $\kappa_{\alpha}=E_{\alpha}(X;q=\infty,t=\infty)$
\cite[Theorem~3]{Ion}. The {\bf Demazure operator} is
\[\pi_i(f)=\partial_i(x_i \cdot f ), \mbox{ \ for $f\in {\mathbb Z}[x_1,x_2,\ldots]$.}\]

Let $\alpha=(\alpha_1,\alpha_2,\ldots)\in {\mathbb Z}_{\geq 0}^\infty$ and suppose that
$|\alpha|=\sum_i \alpha_i<\infty$. Define the {\bf key polynomial} $\kappa_{\alpha}$ to be
\[x^{\alpha}:=x_1^{\alpha_1}x_2^{\alpha_2}\cdots, \mbox{\ \ \ if $\alpha$ is weakly decreasing.}\]
Otherwise, set
\begin{equation}
\label{eqn:keypidef}
\kappa_{\alpha}=\pi_i(\kappa_{\widehat \alpha}) \mbox{\ where $\widehat\alpha=(\ldots,\alpha_{i+1},\alpha_i,\ldots)$ and
$\alpha_{i+1}>\alpha_{i}$.}
\end{equation}
The key polynomials
form a ${\mathbb Z}$-basis of ${\mathbb Z}[x_1,x_2,\ldots]$; see work of V.~Reiner--M.~Shimozono \cite{Reiner.Shimozono} (and references therein) for more on $\kappa_{\alpha}$.

Define ${\sf D}_{\alpha}$ to be the ``skyline'' diagram with a left-justified row of $\alpha_i$ boxes in row $i$. 

\begin{Conjecture}
\label{conj:keytopeineq}
${\mathcal S}_{{\sf D}_{\alpha}}={\sf Newton}(\kappa_{\alpha})$.
\end{Conjecture}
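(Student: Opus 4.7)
The plan is to establish the equality by proving the two inclusions ${\sf Newton}(\kappa_\alpha)\subseteq {\mathcal S}_{{\sf D}_\alpha}$ and ${\mathcal S}_{{\sf D}_\alpha}\subseteq {\sf Newton}(\kappa_\alpha)$ separately. Throughout, I would work with A.~Kohnert's combinatorial formula $\kappa_\alpha = \sum_E x^{{\rm wt}(E)}$, where the sum ranges over all diagrams $E$ obtainable from the skyline ${\sf D}_\alpha$ by a sequence of \emph{Kohnert moves}: within any column, slide the lowest box up to the highest empty position above it. Every such move preserves column sums, so each Kohnert diagram $E$ has the same number of boxes per column as ${\sf D}_\alpha$.

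For the inclusion ${\sf Newton}(\kappa_\alpha)\subseteq {\mathcal S}_{{\sf D}_\alpha}$, fix $S\subseteq [n]$ and a Kohnert diagram $E$, and verify $\sum_{i\in S}({\rm wt}(E))_i\le \theta_{{\sf D}_\alpha}(S)$ column by column. Focus on column $c$ and read the word ${\tt word}_{c,S}({\sf D}_\alpha)$ top to bottom. Each $\star$ records a box of ${\sf D}_\alpha$ already in an $S$-row; its column position is never altered by upward Kohnert moves \emph{within the same column} in a way that removes it from $S$. Each paired $(\,)$ witnesses an $S$-slot (the $($) lying above a $\bar S$-box (the $)$): only for such pairs can a box conceivably be slid upward into an $S$-row. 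An unmatched $($ marks an $S$-slot with no $\bar S$-box below it, and an unmatched $)$ marks a $\bar S$-box that has no $S$-slot above it. A clean Hall-style matching argument then shows that, after any sequence of Kohnert moves, the number of column-$c$ boxes of $E$ landing in $S$-rows is at most the number of $\star$'s plus the number of paired $(\,)$'s, which is exactly $\theta^c_{{\sf D}_\alpha}(S)$. Summing over $c$ yields the Schubitope inequality.

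For the reverse inclusion ${\mathcal S}_{{\sf D}_\alpha}\subseteq {\sf Newton}(\kappa_\alpha)$, which also certifies SNPness of $\kappa_\alpha$, I would pursue two complementary approaches. The first uses that ${\mathcal S}_{{\sf D}_\alpha}$ is a generalized permutahedron (Proposition~\ref{thm:first}) and hence the convex hull of its lattice points, so it suffices to realize each vertex and each lattice point on a lowest-dimensional face as the weight of a Kohnert diagram; the partial list in Theorem~\ref{cor:vertexkeys} supplies many vertices, and a verification of Conjecture~\ref{conj:vertexkeys} would complete the vertex case. The second, more direct, approach is inductive: starting from the seed diagram ${\sf D}_\alpha$ itself (which realizes the $\le_D$-maximal weight $\alpha$), produce explicit ``box-migration'' moves on Kohnert diagrams that connect neighboring lattice points of ${\mathcal S}_{{\sf D}_\alpha}$, using the slack in the Schubitope inequalities to certify feasibility at each step.

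The main obstacle is this second inclusion. Whereas the parenthesis-matching argument reduces the first inclusion to a within-column bookkeeping exercise, realizing an arbitrary lattice point as a Kohnert weight is delicate precisely because Kohnert moves act columnwise while the Schubitope inequalities couple columns nontrivially through $\theta^c_{{\sf D}_\alpha}(S) = \sum_c \theta^c_{{\sf D}_\alpha}(S)$. The salient contrast between Schubitope geometry and Kohnert combinatorics flagged in Remark~\ref{remark:diffwithkohnert} for Schubert polynomials applies here as well, and finding an efficient ``reason'' for lattice-point surjectivity -- whether via a tropical/polyhedral decomposition of ${\mathcal S}_{{\sf D}_\alpha}$ matching a recursive structure of Kohnert diagrams under (\ref{eqn:keypidef}), or via a direct inductive move on keys through the Demazure operators $\pi_i$ -- is the step I expect to be most substantive.
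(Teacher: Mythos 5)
This statement is labeled a \emph{Conjecture} in the paper, not a theorem: the paper does not supply a proof of the full equality, and neither should you be expected to. The authors claim (without exhibiting details) only the containment ${\mathcal S}_{{\sf D}_\alpha}\supseteq {\sf Newton}(\kappa_\alpha)$; see Remark~\ref{remark:keytopehalftrue}, which says a Kohnert-rule argument parallel to Proposition~\ref{thm:onecontainment} gives this one direction. Your ``first inclusion'' is exactly that direction, and your column-by-column parenthesis bookkeeping is essentially the $T_{D,S,c}\le T_{{\sf D},S,c}+U_{{\sf D},S,c}$ bound the paper uses for the Rothe case in Proposition~\ref{thm:onecontainment}, so you have correctly reconstructed the part of the argument the paper actually possesses.

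Two corrections to your account of the first containment. First, a Kohnert move is not ``slide the lowest box in a column to the highest empty position above it''; it takes the \emph{rightmost} box of some \emph{row} and moves it to the \emph{lowest} empty cell above it in the same column (the ``southmost unoccupied square''). The properties you actually need --- column sums are preserved and boxes migrate only upward --- are still correct, so your bound survives, but a careful write-up should state the move correctly since the paired-$()$ count only upper-bounds arrivals into $S$-rows precisely because movement is strictly upward and stays in the column. Second, your assertion that a $\star$-box's ``column position is never altered by upward Kohnert moves within the same column in a way that removes it from $S$'' is not right: a box marked $\star$ can certainly leave $S$; what saves the bound is that any column-$c$ box of $E$ in an $S$-row either was already an $S$-box or filled an empty $S$-cell sitting above some non-$S$-box, and these are counted (with the right injectivity) by $\star$'s plus paired $()$'s.

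For the reverse containment ${\mathcal S}_{{\sf D}_\alpha}\subseteq {\sf Newton}(\kappa_\alpha)$ you are honest that you do not have a proof; this is exactly the open part, and the paper is in the same position. Note, though, that your first proposed route is not quite a strategy: realizing the vertices of ${\mathcal S}_{{\sf D}_\alpha}$ (even assuming Conjecture~\ref{conj:vertexkeys}) does not by itself produce all interior lattice points as Kohnert weights, which is what SNPness demands; a lattice point can lie in the convex hull of realized vertices without being realized. So of your two suggestions, only the second (an explicit box-migration move connecting adjacent lattice points, with a feasibility certificate from the Schubitope slacks) is a genuine candidate for closing the gap, and that is also where you rightly flag the coupling between columns as the obstacle --- the same contrast Remark~\ref{remark:diffwithkohnert} highlights in the Schubert setting.
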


We have a proof (omitted here) of the ``$\supseteq$'' part of Conjecture~\ref{conj:keytopeineq}. See Remark~\ref{remark:keytopehalftrue}.

\begin{Conjecture}
\label{conj:kappaSNP}
$\kappa_{\alpha}$ has SNP.
\end{Conjecture}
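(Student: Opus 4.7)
I would attempt an induction on the length of a reduced word expressing $\alpha$ as a permutation of its weakly decreasing rearrangement $\alpha^+$. The base case, $\alpha = \alpha^+$, is trivial since $\kappa_\alpha = x^\alpha$ is a single monomial. For the inductive step, pick $i$ with $\alpha_i < \alpha_{i+1}$, set $\widehat\alpha := (\ldots, \alpha_{i+1}, \alpha_i, \ldots)$ (swapping positions $i$ and $i+1$ of $\alpha$), and use~(\ref{eqn:keypidef}) to write $\kappa_\alpha = \pi_i(\kappa_{\widehat\alpha})$; the SNP of $\kappa_{\widehat\alpha}$ follows from the inductive hypothesis.

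Recall the action of $\pi_i$ on a monomial $x^\mu$ with $(\mu_i,\mu_{i+1})=(a,b)$: if $a\geq b$ then $\pi_i(x^\mu) = \sum_{j=b}^{a} x^{\mu - (a-j)e_i + (a-j)e_{i+1}}$, if $a=b-1$ it vanishes, and if $a<b-1$ it is an analogous sum over $j\in[a+1,b-1]$ with the opposite sign. Given a lattice point $\beta \in {\sf Newton}(\kappa_\alpha)$, the plan is to exhibit an exponent vector $\mu$ of $\kappa_{\widehat\alpha}$ satisfying $\mu_k=\beta_k$ for $k\notin\{i,i+1\}$, $\mu_i+\mu_{i+1}=\beta_i+\beta_{i+1}$, $\mu_i\geq\mu_{i+1}$, and $\mu_i \geq \max(\beta_i,\beta_{i+1})$; any such $\mu$ makes $x^\beta$ appear with coefficient $+1$ among the terms of $\pi_i(x^\mu)$. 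By the SNP of $\kappa_{\widehat\alpha}$, it then suffices to find any lattice point with these properties \emph{inside} ${\sf Newton}(\kappa_{\widehat\alpha})$.

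Two hurdles remain. \textbf{Hurdle~(a)}: produce such a lattice point $\mu$ in ${\sf Newton}(\kappa_{\widehat\alpha})$; equivalently, show that the maximum value of $\mu_i$ attained on the fiber $\{\mu : \mu_k = \beta_k \text{ for } k \notin \{i,i+1\},\ \mu_i + \mu_{i+1} = \beta_i + \beta_{i+1}\} \cap {\sf Newton}(\kappa_{\widehat\alpha})$ is at least $\max(\beta_i,\beta_{i+1})$. I would attack this using the inclusion ${\sf Newton}(\kappa_{\widehat\alpha}) \subseteq {\mathcal S}_{{\sf D}_{\widehat\alpha}}$ indicated in Remark~\ref{remark:keytopehalftrue}, combined with a careful case analysis of how the parenthesization data $\theta^c$ transform under the row swap $i\leftrightarrow i+1$ relating ${\sf D}_\alpha$ and ${\sf D}_{\widehat\alpha}$. \textbf{Hurdle~(b)}: control the negative contributions from monomials $\mu'$ of $\kappa_{\widehat\alpha}$ with $\mu'_i < \mu'_{i+1} - 1$. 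Here I would invoke the Demazure crystal structure on monomials of $\kappa_{\widehat\alpha}$ (Kashiwara; see also Reiner--Shimozono~\cite{Reiner.Shimozono}), whose $i$-strings pair up monomials so that $\pi_i$ acts transparently within each string and apparent cancellations are locally trackable. Hurdle~(a) is the more difficult of the two and is where I expect the bulk of the work to lie, since it ultimately reduces SNP to a purely combinatorial statement about the Schubitope inequalities governing the row-swap operation.
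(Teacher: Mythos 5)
This statement is Conjecture~\ref{conj:kappaSNP} in the paper; no proof is given there --- the authors only report computational verification for $|\alpha|\leq 7$ (with at most three zero parts). So there is no paper proof to compare against, and a complete argument would be new.

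Your proposal is also not a proof: you explicitly leave hurdles~(a) and~(b) open, and both are substantive. The underlying obstruction is that $\pi_i$ does \emph{not} preserve SNP. The paper records this in Example~\ref{exa:partialsubtle}: with $g=x_1^3+x_1^2x_2+x_1x_2^2+2x_2^3$, one has $\pi_1(g)=x_1^3+x_2^3$, so an SNP input can produce a non-SNP output. Thus the skeleton ``$\kappa_{\widehat\alpha}$ is SNP, hence $\kappa_\alpha=\pi_i(\kappa_{\widehat\alpha})$ is SNP'' cannot close without serious structural input specific to key polynomials, which is exactly what~(a) and~(b) demand and what the proposal does not supply. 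For~(a), the only unconditional inclusion available from the paper (Remark~\ref{remark:keytopehalftrue}) is ${\sf Newton}(\kappa_{\widehat\alpha})\subseteq \mathcal{S}_{{\sf D}_{\widehat\alpha}}$; this is an outer bound (inequalities $\sum_{i\in S}\mu_i\leq\theta(S)$) and by itself cannot certify that a lattice point with $\mu_i\geq\max(\beta_i,\beta_{i+1})$ on your fiber actually lies in ${\sf Newton}(\kappa_{\widehat\alpha})$ --- you would essentially need the reverse inclusion, which is itself the open Conjecture~\ref{conj:keytopeineq}. For~(b), appealing to the Demazure crystal structure names a plausible tool but does not carry out the cancellation bookkeeping: because $\pi_i(x^\nu)$ has sign $-1$ when $\nu_i<\nu_{i+1}-1$, you must show that the aggregate coefficient of $x^\beta$ over all monomials of $\kappa_{\widehat\alpha}$ stays positive for every lattice point $\beta$ of the polytope, and the crystal picture alone does not deliver this without an explicit string-by-string argument (and one must also confirm $\beta$ even lies in the domain the strings reach). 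In short, you have correctly identified where the difficulty lives, but the proposal reduces the conjecture to two other unproved statements rather than resolving it.
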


We have a second conjectural description of ${\sf Newton}(\kappa_{\alpha})$.
Let 
\[m_{ij}(\alpha) = \alpha + e_i - e_j.\]  
Then for any composition $\alpha$, let $\beta <_{\kappa} \alpha$ if $\beta$ can be generated from $\alpha$ by applying a sequence of the moves $t_{ij}$ for $\alpha_i < \alpha_j$,  and $m_{ij}$ if $\alpha_j - \alpha_i > 1$. 

\begin{Conjecture}
\label{conj:keyBruhatCondition}
$\kappa_\alpha = x^\alpha + \sum_{\beta <_{\kappa} \alpha} {\sf Key}_{\alpha,\beta} x^\beta$ with ${\sf Key}_{\alpha,\beta} > 0$ for all $\beta <_{\kappa} \alpha$.
\end{Conjecture}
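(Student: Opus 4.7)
\emph{The plan.} I would prove Conjecture~\ref{conj:keyBruhatCondition} by induction on $\mathrm{inv}(\alpha) := \#\{i<j : \alpha_i < \alpha_j\}$, using the Demazure recursion (\ref{eqn:keypidef}). The base case $\mathrm{inv}(\alpha) = 0$ is immediate: $\alpha$ is weakly decreasing, $\kappa_\alpha = x^\alpha$, and no move $t_{ij}$ or $m_{ij}$ is legal at $\alpha$. For the inductive step, pick $i$ with $\alpha_i < \alpha_{i+1}$, set $\widehat\alpha := s_i\alpha$ (so $\mathrm{inv}(\widehat\alpha) < \mathrm{inv}(\alpha)$), and by induction write $\kappa_{\widehat\alpha} = x^{\widehat\alpha} + \sum_{\gamma <_\kappa \widehat\alpha} K_{\widehat\alpha,\gamma}\, x^\gamma$ with each $K_{\widehat\alpha,\gamma} > 0$; one then analyzes $\kappa_\alpha = \pi_i(\kappa_{\widehat\alpha})$.

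A direct calculation from $\pi_i = \partial_i \circ (x_i \cdot)$ yields
\[
\pi_i(x^\gamma) = \begin{cases} \sum_{k=0}^{\gamma_i - \gamma_{i+1}} x^{\gamma - k(e_i - e_{i+1})}, & \gamma_i \geq \gamma_{i+1}, \\ -\sum_{k=1}^{\gamma_{i+1} - \gamma_i - 1} x^{\gamma + k(e_i - e_{i+1})}, & \gamma_i < \gamma_{i+1}. \end{cases}
\]
Grouping the monomials of $\kappa_{\widehat\alpha}$ by their coordinates outside $\{i, i+1\}$ and by $\gamma_i + \gamma_{i+1}$, the $\pi_i$-strings telescope within each class. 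The coefficient in $\kappa_\alpha$ of a target monomial with $(a,b)$ in positions $(i,i+1)$, $a \geq b$, becomes $K_{\widehat\alpha, \eta^{a,b}} + \sum_{t \geq 1}\bigl(K_{\widehat\alpha, \eta^{a+t, b-t}} - K_{\widehat\alpha, \eta^{b-t, a+t}}\bigr)$, where $\eta^{p, q}$ denotes the class representative with $(\eta_i, \eta_{i+1}) = (p, q)$ and $K_{\widehat\alpha, \cdot}$ is set to $0$ outside the support. Positivity of $\kappa_\alpha$ therefore reduces to the strengthened inductive hypothesis $K_{\widehat\alpha, \eta} \geq K_{\widehat\alpha, s_i\eta}$ for all $\eta$ with $\eta_i > \eta_{i+1}$.

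For the support description, the leading term $x^\alpha$ appears as the rightmost endpoint of the $\pi_i$-string out of $x^{\widehat\alpha}$, and its intermediate members $\alpha + j(e_i - e_{i+1})$ for $1 \leq j \leq \alpha_{i+1} - \alpha_i - 1$ are reachable from $\alpha$ via the swap $t_{i, i+1}$ (yielding $\widehat\alpha$) followed by a chain of $m_{i+1, i}$ moves, each legal since the gap at positions $(i,i+1)$ stays positive throughout. Running this analysis over every $\gamma <_\kappa \widehat\alpha$ and concatenating witnessing move sequences establishes $\mathrm{supp}(\kappa_\alpha) \subseteq \{\alpha\} \cup \{\beta : \beta <_\kappa \alpha\}$. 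For the reverse inclusion, given $\beta <_\kappa \alpha$ witnessed by a move chain, one lifts the chain through the $s_i$-flip to produce a chain $\widehat\alpha \to \beta$ or $\widehat\alpha \to s_i\beta$; invoking induction places the target inside $\mathrm{supp}(\kappa_{\widehat\alpha})$, and $\beta$ is then read off from the corresponding $\pi_i$-string.

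The hardest step is the strengthened positivity $K_{\widehat\alpha, \eta} \geq K_{\widehat\alpha, s_i\eta}$: this is the ``Demazure-positivity'' of $\kappa_{\widehat\alpha}$ with respect to $\pi_i$, a property not automatically given by the induction hypothesis and which must be carried along as an additional inductive claim. The natural way to establish it is to exhibit a coefficient-preserving injection $\{\eta \leq_\kappa \widehat\alpha : \eta_i < \eta_{i+1}\} \hookrightarrow \{\eta \leq_\kappa \widehat\alpha : \eta_i > \eta_{i+1}\}$ given by $\eta \mapsto s_i\eta$; the combinatorial core is showing that every move chain $\widehat\alpha \to s_i\eta$ rewrites into a chain $\widehat\alpha \to \eta$ in a multiplicity-respecting fashion. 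An alternative route bypasses the recursion entirely by invoking Mason's semi-skyline augmented-filling formula $\kappa_\alpha = \sum_T x^{\mathrm{wt}(T)}$ (which yields positivity for free) and then directly matching $\{\mathrm{wt}(T)\}$ with $\{\alpha\} \cup \{\beta : \beta <_\kappa \alpha\}$ by interpreting each filling as a legal sequence of $t$- and $m$-moves originating at $\alpha$.
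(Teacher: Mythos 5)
The statement you are addressing is Conjecture~\ref{conj:keyBruhatCondition}: the paper offers no proof, only computational verification for $|\alpha|\le 7$ with at most three zero parts, so your attempt must stand alone. As written it is a strategy outline with an acknowledged hole rather than a proof. The computation of $\pi_i(x^\gamma)$ and the telescoping of coefficients along $s_i$-strings are correct, but the entire positivity argument is made to rest on the auxiliary ``Demazure positivity'' inequality $K_{\widehat\alpha,\eta}\ge K_{\widehat\alpha,s_i\eta}$ (for $\eta_i>\eta_{i+1}$), which you flag as the hardest step and then do not establish. Worse, to close the induction you would need $\kappa_\alpha=\pi_i(\kappa_{\widehat\alpha})$ to enjoy the analogous $s_j$-inequality at \emph{every} descent $j$ of $\alpha$ --- not only at $j=i$, where it is forced by the $s_i$-symmetry of $\pi_i$-images --- and the interaction of $\pi_i$ with the $s_j$-string structure for $j\ne i$ is precisely the difficulty, not addressed anywhere. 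Note also that your telescoped formula only gives $[x^\beta]\kappa_\alpha \ge K_{\widehat\alpha,\beta}$ (when $\beta_i\ge\beta_{i+1}$); to deduce \emph{strict} positivity for every $\beta<_\kappa\alpha$ you would in addition need $\beta<_\kappa\widehat\alpha$ or $\beta=\widehat\alpha$, a nontrivial poset statement about $<_\kappa$ that you do not argue.

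The support-identification half is likewise only sketched. For $\mathrm{supp}(\kappa_\alpha)\supseteq\{\beta:\beta<_\kappa\alpha\}$ you propose ``lifting the chain through the $s_i$-flip,'' but a general $<_\kappa$-chain from $\alpha$ to $\beta$ interleaves $t_{pq}$ and $m_{pq}$ moves across many index pairs and has no evident compatibility with the single-index Demazure recursion; one needs a confluence or rewriting lemma for the move system, which is absent. The alternative via Mason's skyline formula is left at the level of an idea. One structural simplification you should exploit: nonnegativity (indeed integrality) of the coefficients of $\kappa_\alpha$ is already known (Kohnert's rule for keys, or Mason's fillings), so the genuinely open content of Conjecture~\ref{conj:keyBruhatCondition} is the \emph{exact} support description $\mathrm{supp}(\kappa_\alpha)=\{\alpha\}\cup\{\beta:\beta<_\kappa\alpha\}$ --- and it is exactly there that your proposal is thinnest.
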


(Observe that $\beta <_{\kappa} \alpha$, then $\beta <_S \alpha$.
However, the converse fails as $11 <_S 20$ but one does not have
$11 <_{\kappa} 20$.)

For two compositions $\gamma$ and $\alpha$ we write
\[\gamma\preceq \alpha \text{ \ if 
$\lambda(\gamma)=\lambda(\alpha)$ and $w(\gamma)\leq w(\alpha)$ in Bruhat order.}\]
Here $\lambda(\gamma)$ is the partition obtained by resorting the
parts of $\gamma$. Also $w(\gamma)$ is the shortest length
permutation that sends $\lambda(\gamma)$ to $\gamma$. (Strong) Bruhat order refers to
the ordering on permutations obtained as the closure of the relation
$w\leq wt_{ij}$ if $\ell(wt_{ij})=\ell(w)+1$ and $t_{ij}$ is a transposition.

\begin{Theorem}
\label{cor:vertexkeys}
If $\beta\preceq \alpha$ then $\beta$ is a vertex of ${\sf Newton}(\kappa_{\alpha})$.
\end{Theorem}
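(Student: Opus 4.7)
The plan is to split the statement into two intermediate facts and then combine them via a convex-geometric observation.

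\textbf{Fact A} (Containment): ${\sf Newton}(\kappa_\alpha)\subseteq {\mathcal P}_{\lambda(\alpha)}$.

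\textbf{Fact B} (Occurrence): $[x^\beta]\kappa_\alpha > 0$ for every $\beta\preceq\alpha$.

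Given these, the theorem follows quickly. Since $\beta\preceq\alpha$ forces $\lambda(\beta)=\lambda(\alpha)$, $\beta$ is a rearrangement of $\lambda(\alpha)$, and hence a vertex of ${\mathcal P}_{\lambda(\alpha)}$ by Proposition~\ref{prop:generalthing}(II). Fact B places $\beta$ inside ${\sf Newton}(\kappa_\alpha)$, while Fact A places ${\sf Newton}(\kappa_\alpha)$ inside ${\mathcal P}_{\lambda(\alpha)}$. Any nontrivial convex combination expressing $\beta$ from exponents of $\kappa_\alpha$ would also sit inside ${\mathcal P}_{\lambda(\alpha)}$, contradicting that $\beta$ is a vertex of the larger polytope. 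So $\beta$ is a vertex of ${\sf Newton}(\kappa_\alpha)$.

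For Fact A, I would induct on $\ell(w(\alpha))$. The base case $\alpha=\lambda(\alpha)$ is trivial since $\kappa_\alpha=x^\alpha$. For the step, pick $i$ with $\alpha_i<\alpha_{i+1}$ so $\kappa_\alpha=\pi_i(\kappa_{s_i\alpha})$, and observe from the explicit formula for $\pi_i(x^\gamma)$ that every produced monomial $x^{\gamma'}$ satisfies $\gamma'_j=\gamma_j$ for $j\notin\{i,i+1\}$ and has $(\gamma'_i,\gamma'_{i+1})$ with the same sum as $(\gamma_i,\gamma_{i+1})$ but with the pair more balanced. Hence $\lambda(\gamma')\leq_D\lambda(\gamma)$, and iterating yields $\leq_D\lambda(\alpha)$; (\ref{eqn:containDominance}) then finishes Fact A.

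For Fact B, I would again induct on $\ell(w(\alpha))$, using the Bruhat lifting property applied to $w(\alpha)=s_iw(\alpha')$ with $\alpha'=s_i\alpha$ and $\ell(w(\alpha))=\ell(w(\alpha'))+1$. For any $w(\beta)\leq w(\alpha)$, either (i) $w(\beta)\leq w(\alpha')$, or (ii) $s_iw(\beta)<w(\beta)$ and $s_iw(\beta)\leq w(\alpha')$. In case (ii), minimality of $\ell(w(\beta))$ forces $\beta_i<\beta_{i+1}$, so $s_i\beta\preceq\alpha'$; the induction hypothesis supplies $[x^{s_i\beta}]\kappa_{\alpha'}>0$, and the explicit formula $\pi_i(x_i^ax_{i+1}^b)=\sum_{k=0}^{a-b}x_i^{a-k}x_{i+1}^{b+k}$ (for $a\geq b$) shows $\pi_i(x^{s_i\beta})$ contributes $x^\beta$ with coefficient $+1$. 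In case (i), $\beta\preceq\alpha'$ with $\beta_i\geq\beta_{i+1}$, and the same formula (its $k=0$ term) contributes $x^\beta$ with coefficient $+1$ from $\pi_i(x^\beta)$.

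The main obstacle is ensuring the positive contributions to $[x^\beta]\pi_i(\kappa_{\alpha'})$ are not cancelled by negative contributions from monomials $x^\gamma$ with $\gamma_i<\gamma_{i+1}-1$, since $\pi_i$ on such monomials produces signed output. The cleanest resolution appeals to the crystal-theoretic form of the Demazure character formula: $\kappa_\alpha$ is the character of the Demazure module $V_{w(\alpha)}(\lambda(\alpha))$, whose extremal weight space for $v\cdot\lambda(\alpha)$ with $v\leq w(\alpha)$ is one-dimensional, giving Fact B directly. Alternatively, one can exhibit an explicit tableau of content $\beta$ for each $\beta\preceq\alpha$ via Kohnert's rule or the Lascoux--Sch\"utzenberger formula, avoiding representation-theoretic input.
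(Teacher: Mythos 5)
Your argument is correct, but you reach the two key facts by different means than the paper. The final convex-geometric step is identical: combine Proposition~\ref{prop:generalthing}(II) with Lemma~\ref{lemma:PinQvertex} to pass from ``vertex of ${\mathcal P}_{\lambda(\alpha)}$ lying in the smaller polytope'' to ``vertex of the smaller polytope.'' Where the paper differs is that both your Fact A and Fact B are consequences of one tool, the atom decomposition $\kappa_\alpha=\sum_{\gamma\preceq\alpha}A_\gamma$ with monomial-positive $A_\gamma$ (Proposition~\ref{prop:nesting}): it gives the nesting ${\sf Newton}(\kappa_\beta)\subseteq{\sf Newton}(\kappa_\alpha)\subseteq{\sf Newton}(\kappa_{\lambda^{\rm rev}})={\mathcal P}_\lambda$, and combined with the normalization $\kappa_\beta=x^\beta+\text{(positive)}$ from Reiner--Shimozono, it places $\beta$ inside ${\sf Newton}(\kappa_\alpha)$ without ever computing $[x^\beta]\kappa_\alpha$ directly. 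Your Fact A instead runs a fresh induction on $\ell(w(\alpha))$ using the explicit action of $\pi_i$; that works, since even the signed output of $\pi_i$ on a monomial with $\gamma_i<\gamma_{i+1}-1$ only produces exponent pairs in positions $i,i+1$ that are dominance-dominated by the originals, so the image stays inside ${\mathcal P}_{\lambda(\alpha)}$. For Fact B you attempt a Bruhat-lifting induction, correctly flag that cancellations from negative $\pi_i$-contributions block the naive argument, and then resolve it by invoking the Demazure module interpretation (one-dimensionality of the extremal weight space $v\lambda$, $v\leq w$) or, alternatively, Kohnert's rule / the Lascoux--Sch\"utzenberger combinatorial model. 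That resolution is legitimate, but it imports representation-theoretic or external combinatorial machinery that the paper sidesteps entirely via the atom decomposition. The trade-off: the paper's proof is self-contained given its Proposition~\ref{prop:nesting}, whereas yours avoids atoms and gives a direct, hands-on inductive proof of the polytope containment at the cost of a heavier input for the monomial positivity statement.
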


\begin{Conjecture}
\label{conj:vertexkeys}
The converse of Theorem~\ref{cor:vertexkeys} holds.
\end{Conjecture}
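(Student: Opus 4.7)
The plan is to derive the theorem from two statements: (A) ${\sf Newton}(\kappa_\alpha) \subseteq {\mathcal P}_{\lambda(\alpha)}$, and (B) $[x^\beta]\kappa_\alpha \neq 0$ whenever $\beta \preceq \alpha$. Given both, Proposition~\ref{prop:generalthing}(II) makes every rearrangement of $\lambda(\alpha)$ a vertex of ${\mathcal P}_{\lambda(\alpha)}$, so any such $\beta$ is automatically a vertex of the smaller polytope ${\sf Newton}(\kappa_\alpha)$.

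For (A), I would induct on $\ell(w(\alpha))$. The base $\alpha = \lambda(\alpha)$ is trivial since $\kappa_\alpha = x^\alpha$. For the inductive step, pick $i$ with $\alpha_{i+1} > \alpha_i$ and write $\kappa_\alpha = \pi_i(\kappa_{\hat\alpha})$ using (\ref{eqn:keypidef}). A direct expansion shows that every monomial of $\pi_i(x^\gamma)$ has an exponent vector agreeing with $\gamma$ outside positions $i, i+1$, preserving the coordinate sum, and taking at those positions a pair $(a', b')$ with $\{a', b'\} \subseteq [\min(\gamma_i,\gamma_{i+1}), \max(\gamma_i,\gamma_{i+1})]$. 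Such ``balancing'' of two coordinates weakly decreases the sorted partition in dominance order, so by (\ref{eqn:containDominance}) and the inductive hypothesis the new exponent vector lies in ${\mathcal P}_{\lambda(\hat\alpha)} = {\mathcal P}_{\lambda(\alpha)}$.

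For (B), the cleanest route invokes the established structure theory of key polynomials: the monomials $x^\beta$ in $\kappa_\alpha$ with $\beta$ a rearrangement of $\lambda(\alpha)$ correspond to the extremal weights of the Demazure module $V_{w(\alpha)}(\lambda(\alpha))$, which by Demazure's theorem are precisely $\{u \cdot \lambda(\alpha) : u \leq w(\alpha)\}$, each with multiplicity one. Equivalently, one may use the Lascoux--Sch\"utzenberger right-key formula $\kappa_\alpha = \sum_{T : K_+(T) = K(\alpha)} x^T$ (summed over SSYT $T$ of shape $\lambda(\alpha)$) and exhibit, for each $\beta \preceq \alpha$, a specific SSYT of content $\beta$ and right key $K(\alpha)$. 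Either input yields $[x^\beta]\kappa_\alpha = 1 \neq 0$, which together with (A) completes the argument.

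The main obstacle is making (B) self-contained. The natural inductive approach uses the recursion $\kappa_\alpha = \pi_i(\kappa_{\hat\alpha})$ together with the Bruhat lifting property for $w(\alpha) = s_i \cdot w(\hat\alpha)$, which partitions $\{\beta : \beta \preceq \alpha\}$ into $\{\beta : \beta \preceq \hat\alpha\}$ and $\{\beta : \beta_i < \beta_{i+1} \text{ and } s_i \beta \preceq \hat\alpha\}$. The difficulty is that $\pi_i$ acts with mixed signs on monomials, so in extracting $[x^\beta]\pi_i(\kappa_{\hat\alpha})$ one must rule out cancellations between ``high'' pairs $(a,b)$ with $a \geq b$ (contributing $+1$) and ``low'' pairs $(a,b)$ with $a < b - 1$ (contributing $-1$). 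Restricting to the top-weight layer (monomials whose sort is $\lambda(\alpha)$) simplifies this: only $(a,b) \in \{(\beta_i,\beta_{i+1}),(\beta_{i+1},\beta_i)\}$ are relevant, and one can verify case by case on the sign of $\beta_i - \beta_{i+1}$ that no cancellation occurs. However, the bookkeeping for lower-weight ``noise'' is precisely the multiplicity-one phenomenon above, and a fully direct combinatorial proof essentially re-derives the right-key theorem.
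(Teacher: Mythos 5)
You have proven the wrong implication. Conjecture~\ref{conj:vertexkeys} asserts the \emph{converse} of Theorem~\ref{cor:vertexkeys}: if $\beta$ is a vertex of ${\sf Newton}(\kappa_\alpha)$, then $\beta \preceq \alpha$. Your argument — combining (A) ${\sf Newton}(\kappa_\alpha) \subseteq {\mathcal P}_{\lambda(\alpha)}$ and (B) $[x^\beta]\kappa_\alpha \neq 0$ for $\beta \preceq \alpha$, then invoking Proposition~\ref{prop:generalthing}(II) and Lemma~\ref{lemma:PinQvertex} — is a proof that $\beta \preceq \alpha$ implies $\beta$ is a vertex. That is precisely Theorem~\ref{cor:vertexkeys} itself, and indeed your route reproduces the paper's proof of that theorem almost verbatim (the paper gets (A) and (B) jointly from Proposition~\ref{prop:nesting}, which uses the atom decomposition $\kappa_\alpha = \sum_{\gamma \preceq \alpha} A_\gamma$ together with $\kappa_{\lambda^{\rm rev}} = s_\lambda$). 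None of it speaks to the converse.

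The converse is genuinely harder, and your two ingredients cannot be run in reverse. First, the containment ${\sf Newton}(\kappa_\alpha) \subseteq {\mathcal P}_{\lambda(\alpha)}$ tells you nothing about which points of the inner polytope are vertices: a polytope sitting inside a permutahedron can perfectly well have vertices that are not vertices of the permutahedron (Lemma~\ref{lemma:PinQvertex} goes in only one direction). So one would need to rule out vertices of ${\sf Newton}(\kappa_\alpha)$ coming from lower-weight monomials $x^\gamma$ with $\lambda(\gamma) <_D \lambda(\alpha)$. Second, even among top-weight monomials, one must argue that no rearrangement of $\lambda(\alpha)$ that fails $\beta \preceq \alpha$ can appear as a vertex — which is trivially true only if one already knows that the monomials at top weight are exactly the extremal weights $\{\beta : \beta \preceq \alpha\}$, and then the real obstruction is the lower-weight layers. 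This is why the statement remains a conjecture in the paper (verified computationally for $|\alpha| \leq 7$) rather than a theorem; there is no proof of it in the paper for your attempt to be compared against.
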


Our proof of Theorem~\ref{cor:vertexkeys} uses
the other specialization of interest, namely $E_{\alpha}(X;q=0,t=0)$. 
Let
\[\widehat\pi_i:=\pi_i-{\rm id}\]
and define the {\bf Demazure atom} $A_{\alpha}=x^{\alpha}$ 
if $\alpha$ is weakly decreasing. Otherwise
$A_{\alpha}=\widehat{\pi_i}(A_{\widehat{\alpha}})$ where
$\widehat\alpha$ is defined as in (\ref{eqn:keypidef}). By the way,
\begin{Conjecture}
\label{conj:AisSNP}
$A_{\alpha}$ has SNP.
\end{Conjecture}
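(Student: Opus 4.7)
The plan is to prove Conjecture~\ref{conj:AisSNP} using the combinatorial formula of S.~Mason expressing the Demazure atom as a positive sum
\[
A_\alpha = \sum_F x^{{\rm wt}(F)},
\]
where $F$ ranges over semi-skyline augmented fillings (SSAFs) of the skyline diagram ${\sf D}_\alpha$ whose first-column entries are \emph{exactly} $i$ in each nonempty row $i$. This ``anchored first column'' is the feature distinguishing atoms from keys, and relaxing it recovers the decomposition $\kappa_\alpha=\sum_{\gamma\preceq\alpha}A_\gamma$ that underlies Theorem~\ref{cor:vertexkeys}. In particular, every monomial of $A_\alpha$ has a unique preimage $F$, so no cancellations occur and the support of $A_\alpha$ is described purely combinatorially.

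Guided by small cases such as $A_{(0,0,2)}=x_1x_3+x_2x_3+x_3^2$ and $A_{(1,0,2)}=x_1x_2x_3+x_1x_3^2$, in each of which the support of every monomial contains the support of $\alpha$, I would conjecture the polyhedral description ${\sf Newton}(A_\alpha)=\widetilde{{\mathcal S}}_{{\sf D}_\alpha}$, where $\widetilde{{\mathcal S}}_{{\sf D}_\alpha}$ is the Schubitope ${\mathcal S}_{{\sf D}_\alpha}$ intersected with the ``anchor'' half-spaces $\beta_i\geq 1$ for each $i$ with $\alpha_i>0$. The containment ``$\subseteq$'' should follow from a column-by-column accounting of SSAF entries, paralleling the ``$\supseteq$'' argument for keys mentioned in Remark~\ref{remark:keytopehalftrue}. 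For the reverse containment---which immediately yields SNP---I would construct an SSAF of weight $\beta$ for each lattice point $\beta\in\widetilde{{\mathcal S}}_{{\sf D}_\alpha}$ by filling columns greedily from left to right: the anchored first column is forced, and at each subsequent column the Schubitope parenthesis matching together with the support inequalities should furnish a legal set of entries that simultaneously respects the SSAF triple condition and consumes the prescribed multiplicities from $\beta$.

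The main obstacle is showing that this greedy construction never fails: the SSAF triple condition couples adjacent columns, so a delicate exchange or counting argument is needed to guarantee that an early committed choice never obstructs completion of the filling. A plausible alternative, which sidesteps pinning down ${\sf Newton}(A_\alpha)$ in closed form, is to show directly that the SSAF weight set is closed under a family of content-modifying moves coming from the crystal structure on SSAFs; SNP then reduces to connectivity of this set between its extremal elements (the anchor-compatible $S_n$-rearrangements of $\alpha$, which play the role for $A_\alpha$ that $\preceq\alpha$ plays in Theorem~\ref{cor:vertexkeys} for $\kappa_\alpha$). Either route will ultimately require a fresh combinatorial lemma about anchored compositions, and identifying that lemma is where the real work lies.
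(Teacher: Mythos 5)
The statement you are addressing is labeled a \emph{Conjecture} in the paper: the authors give no proof, only a report of computational verification for $|\alpha|\leq 7$ with at most three parts of size zero. So there is no internal argument to compare your sketch against; a complete proof would be new. You are candid that your plan still requires an unidentified combinatorial lemma, and the framework you pick (Mason's SSAF model for Demazure atoms, together with a polyhedral description of the support) is a sensible place to look. The problem is that the specific candidate description you propose is already falsified by your own example.

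Take $\alpha=(1,0,2)$. You correctly compute $A_{(1,0,2)}=x_1x_2x_3+x_1x_3^2$, so ${\sf Newton}(A_{(1,0,2)})$ is the segment joining $(1,1,1)$ and $(1,0,2)$. Working out the Schubitope inequalities for the skyline diagram ${\sf D}_{(1,0,2)}$ (this is ${\sf Newton}(\kappa_{(1,0,2)})$, the shaded region in Figure~\ref{fig:nestedkeys}), one finds that $(2,0,1)$ lies in ${\mathcal S}_{{\sf D}_{(1,0,2)}}$: the binding constraints are $\alpha_1\leq 2$ and $\alpha_2+\alpha_3\leq 2$, both of which $(2,0,1)$ satisfies. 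Since $(2,0,1)$ also obeys your anchor conditions $\beta_1\geq 1$ and $\beta_3\geq 1$, it lies in your $\widetilde{{\mathcal S}}_{{\sf D}_\alpha}$; but $(2,0,1)$ is not in ${\sf conv}\{(1,1,1),(1,0,2)\}$ at all. So $\widetilde{{\mathcal S}}_{{\sf D}_\alpha}\supsetneq {\sf Newton}(A_\alpha)$ even as polytopes, and the proposed greedy SSAF construction must fail at the point $(2,0,1)$ because no SSAF of that weight exists. The vertex description you float has the same issue: $(1,1,1)$ is a vertex of ${\sf Newton}(A_{(1,0,2)})$ but is not a rearrangement of $(1,0,2)$. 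The lesson is that the atom's Newton polytope is governed by column-coupling constraints (the SSAF triple condition) that are not captured by the column-independent Schubitope inequalities plus support lower bounds, so any polyhedral route will need a genuinely different family of inequalities before the ``construct-a-filling'' step can even be attempted.
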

That $E_{\alpha}(X;q=0,t=0)=A_{\alpha}$ is \cite[Theorem~1.1]{Mason}.

The five conjectures above, namely Conjectures~\ref{conj:keytopeineq}, 
\ref{conj:kappaSNP}, \ref{conj:keyBruhatCondition}, 
\ref{conj:vertexkeys} and~\ref{conj:AisSNP} have been checked for
$|\alpha|\leq 7$ where $\alpha$ has at most three
parts of size zero.

We will also use
\begin{equation}
\label{eqn:kappa2atom}
\kappa_{\alpha}=\sum_{\gamma\preceq \alpha} A_{\gamma}
\end{equation}
One reference for (\ref{eqn:kappa2atom}) is \cite[Section~1]{Mason}; a proof is found in \cite[Lemma~3.5]{Pun}.

\begin{Proposition}
\label{prop:nesting}
Suppose $\beta\preceq\alpha$. Let $\lambda=\lambda(\beta)=\lambda(\alpha)$.
Then
\[\{\lambda\}\subseteq {\sf Newton}(\kappa_{\beta})\subseteq
{\sf Newton}(\kappa_{\alpha})\subseteq {\mathcal P}_{\lambda}\subseteq {\mathbb R}^n,\]
where $n$ is the position of the last nonzero part of $\alpha$.
\end{Proposition}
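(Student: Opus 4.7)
The plan is to reduce the proposition to bookkeeping over Demazure atoms via (\ref{eqn:kappa2atom}), using two well-known features of atoms: (i) each $A_\gamma$ has a monomial-positive expansion (proved combinatorially by Mason \cite{Mason} through semistandard augmented fillings), and (ii) the Schur polynomial itself decomposes as
\[s_\lambda(x_1,\ldots,x_n) = \sum_{\gamma \,:\, \lambda(\gamma)=\lambda,\ \ell(\gamma)\leq n} A_\gamma,\]
also in \emph{loc.\ cit.} With these in hand the three inclusions fall out in order.

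For $\{\lambda\}\subseteq {\sf Newton}(\kappa_\beta)$, I would observe that $w(\lambda)=e$ is the Bruhat minimum, so $\lambda\preceq\beta$ trivially. Then (\ref{eqn:kappa2atom}) forces $A_\lambda = x^\lambda$ to appear as a summand of $\kappa_\beta$, and by monomial-positivity no other atom can cancel it; hence $\lambda$ is an honest exponent vector.

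For ${\sf Newton}(\kappa_\beta)\subseteq {\sf Newton}(\kappa_\alpha)$, transitivity of Bruhat order (and hence of $\preceq$ on compositions with a fixed sort) gives $\{\gamma:\gamma\preceq\beta\}\subseteq\{\gamma:\gamma\preceq\alpha\}$. Combined with monomial-positivity, every monomial of $\kappa_\beta$ is already a monomial of $\kappa_\alpha$, which is more than enough to conclude the Newton polytope containment. For ${\sf Newton}(\kappa_\alpha)\subseteq {\mathcal P}_\lambda$, each $\gamma\preceq\alpha$ satisfies $\lambda(\gamma)=\lambda$ and, since $w(\gamma)\leq w(\alpha)\in S_n$, has support in $[n]$. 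Hence every such $A_\gamma$ is one of the summands in the decomposition of $s_\lambda(x_1,\ldots,x_n)$ above, so every monomial of $\kappa_\alpha$ is a monomial of $s_\lambda$. Since ${\sf Newton}(s_\lambda) = {\mathcal P}_\lambda$ by Proposition~\ref{prop:generalthing}(I), the final inclusion follows.

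The only substantive point to import is the pair of facts about atoms stated at the outset; once those are cited, the proposition is a direct bookkeeping consequence of (\ref{eqn:kappa2atom}), the transitivity of $\preceq$, and the already-established Newton polytope of the Schur polynomial. I do not anticipate any genuine obstacle beyond correct attribution.
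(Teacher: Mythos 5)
Your argument is correct and follows essentially the same route as the paper: decompose $\kappa_\alpha = \sum_{\gamma\preceq\alpha} A_\gamma$ via (\ref{eqn:kappa2atom}), use monomial-positivity of atoms to pass to Newton polytopes, and for the outer inclusion identify $s_\lambda(x_1,\ldots,x_n)$ as the key polynomial of the $\preceq$-maximal rearrangement of $\lambda$ in $[n]$. The paper phrases the endpoints as $\kappa_\lambda = x^\lambda$ and $\kappa_{\lambda^{\mathrm{rev}}} = s_\lambda$ rather than invoking the atom decomposition of $s_\lambda$ directly, but these are equivalent bookkeeping choices.
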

\begin{proof}
Using (\ref{eqn:kappa2atom}) twice, we have
\[
\kappa_\alpha = \sum_{\gamma \preceq \alpha} A_\gamma  = \sum_{\gamma \preceq \beta} A_\gamma + \sum_{\substack{\gamma \preceq \alpha \\ \gamma \not\preceq \beta}} A_\gamma = \kappa_\beta +  \sum_{\substack{\gamma \preceq \alpha \\ \gamma \not\preceq \beta}} A_\gamma.\]
Since each $A_{\gamma}$ is
monomial positive \cite[Theorem~1.1]{Mason},  
\[{\sf Newton}(\kappa_\beta)\subseteq {\sf Newton}(\kappa_{\alpha}).\]

Now, $\lambda$ is $\preceq$-minimum among rearrangements
of $\lambda$. By definition $\kappa_{\lambda}=x^{\lambda}$. This explains the leftmost containment. 

Let $\lambda^{\rm rev}=(0,0,\ldots,0,\ldots,\lambda_3,\lambda_2,\lambda_1)\in {\mathbb Z}^n$. Then $\lambda^{\rm rev}$ is the $\preceq$-maximum among 
rearrangements of $\lambda$ in ${\mathbb Z}^n$. Also, we have
$\kappa_{\lambda^{\rm rev}} = s_\lambda$ (see, e.g., \cite[Section~4]{Mason} and references therein). However we know 
${\sf Newton}(s_{\lambda})={\mathcal P}_{\lambda}$. 
\end{proof}

\begin{Lemma}
\label{lemma:PinQvertex}
Suppose ${\mathcal P}$ and ${\mathcal Q}$ are polytopes such that ${\mathcal P}\subseteq {\mathcal Q}$.
If $v$ is a vertex of ${\mathcal Q}$ and $v\in {\mathcal P}$, then $v$ is a vertex of ${\mathcal P}$.
\end{Lemma}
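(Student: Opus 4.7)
The plan is to use the characterization of vertices as extreme points, where a point $v$ of a polytope is a vertex if and only if it cannot be written as a nontrivial convex combination of two other points in the polytope. This characterization is equivalent to the supporting hyperplane definition, and either formulation will make the containment argument essentially immediate.

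First I would suppose, for contradiction, that $v$ is \emph{not} a vertex of $\mathcal{P}$. Then there exist points $p_1, p_2 \in \mathcal{P}$ with $p_1 \neq p_2$ and a scalar $t \in (0,1)$ such that $v = t p_1 + (1-t) p_2$. Next I would invoke the hypothesis $\mathcal{P} \subseteq \mathcal{Q}$: since $p_1, p_2 \in \mathcal{P}$, also $p_1, p_2 \in \mathcal{Q}$. Therefore $v$ is expressed as a nontrivial convex combination of two distinct points of $\mathcal{Q}$, contradicting the hypothesis that $v$ is a vertex of $\mathcal{Q}$.

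Alternatively, one could give the proof via a supporting linear functional: if $v$ is a vertex of $\mathcal{Q}$, there exists a linear functional $\ell$ with $\ell(v) > \ell(q)$ for every $q \in \mathcal{Q} \setminus \{v\}$; restricting $\ell$ to $\mathcal{P} \subseteq \mathcal{Q}$ shows $v$ is likewise a vertex of $\mathcal{P}$. Either route is routine, and there is no genuine obstacle to overcome; the statement is essentially a formal consequence of the extremality characterization of vertices combined with the inclusion $\mathcal{P} \subseteq \mathcal{Q}$. The lemma will be applied in the sequel with $\mathcal{P} = {\sf Newton}(\kappa_\alpha)$ and $\mathcal{Q} = \mathcal{P}_\lambda$ to transfer vertexhood from the permutahedron down to the Newton polytope of a key polynomial, in combination with Proposition~\ref{prop:nesting}.
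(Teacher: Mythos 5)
Your proposal is correct, and your alternative argument via a supporting linear functional is exactly the paper's proof (the paper phrases it as: $v$ is a vertex of $\mathcal{Q}$ iff there is a vector $\mathbf{c}$ with $\mathbf{c}'v < \mathbf{c}'y$ for all $y \in \mathcal{Q}$, and this same $\mathbf{c}$ works for $\mathcal{P}$). Your primary route via the extreme-point characterization is a trivially equivalent reformulation, so there is no substantive difference in approach.
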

\begin{proof}
$v$ is a vertex of ${\mathcal Q}$ if and only if 
there is a separating hyperplane $H$, i.e., there exists a vector $c$ such that
${\bf c}'v<{\bf c}'y$ for all $y\in {\mathcal Q}$. Since 
${\mathcal P}\subseteq {\mathcal Q}$, $H$
works for ${\mathcal P}$ also.
\end{proof}

\noindent
\emph{Proof of Theorem~\ref{cor:vertexkeys}:}
 Now, 
\[\kappa_{\alpha}=x^{\alpha}+\text{(positive sum of monomials)};\]
see, e.g., \cite[Corollary~7]{Reiner.Shimozono}. 
Hence, $\alpha$ is in ${\sf Newton}(\kappa_{\alpha})$. 
By Proposition~\ref{prop:nesting}, 
\[\beta\in {\sf Newton}(\kappa_{\beta})\subseteq {\sf Newton}(\kappa_{\alpha}) \text{\ if $\beta\preceq\alpha$.}\]

Again applying Proposition~\ref{prop:nesting} we have that
${\sf Newton}(\kappa_{\alpha})\subseteq {\mathcal P}_{\lambda(\alpha)}$. Now we are done by combining
Proposition~\ref{prop:generalthing}(II) and Lemma~\ref{lemma:PinQvertex}.
\qed

\begin{figure}
\begin{picture}(200,200)
\put(0,0){\includegraphics[width=3.5in]{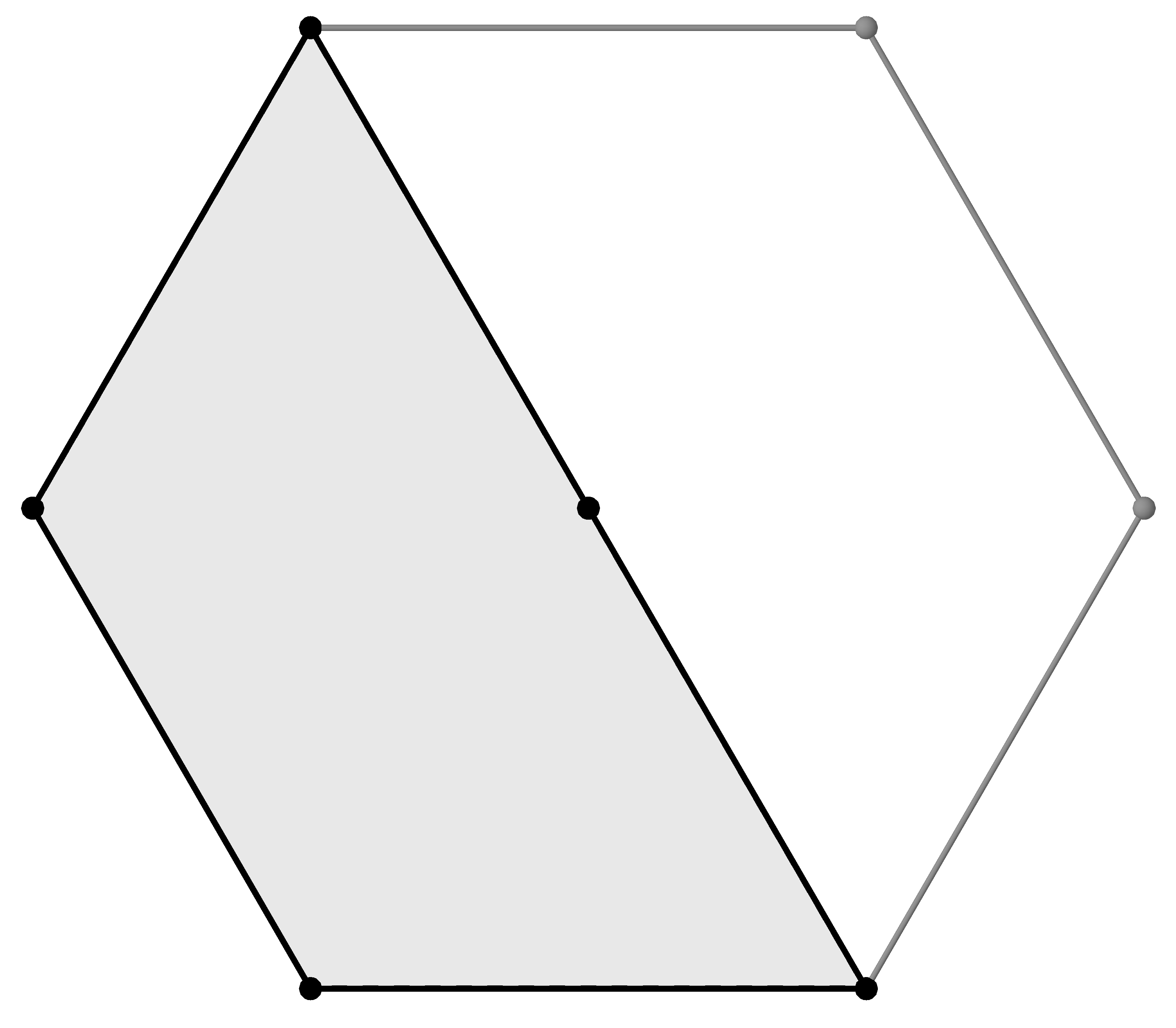}}
\put(23,212){$(1,0,2)$}
\put(-37,110){$(2,0,1)$}
\put(23,5){$(2,1,0)$}
\put(193,5){$(1,2,0)$}
\put(253,110){\color{gray}$(0,2,1)$}
\put(193,212){\color{gray}$(0,1,2)$}
\put(133,110){$(1,1,1)$}
\end{picture}
\caption{The permutahedron for $\lambda=(2,1,0)$. The shaded region is ${\sf Newton}(\kappa_{1,0,2})$. See Proposition~\ref{prop:nesting}.}
\label{fig:nestedkeys}
\end{figure}

\section{Quasisymmetric Functions}
\label{sec:quasi}
A power series $f\in {\mathbb Z}[[x_1,x_2,\ldots]]$ is
{\bf quasisymmetric} if
\[[x_1^{a_1}x_2^{a_2}\cdots x_k^{a_k}]f=
[x_{i_1}^{a_1}x_{i_2}^{a_2}\cdots x_{i_k}^{a_k}]f\] 
for any
natural numbers $i_1<i_2<\ldots<i_k$.
As with ${\sf Sym}$, define a quasisymmetric function $f$ to be SNP if $f(x_1,x_2,\ldots,x_m,0,0,\ldots)$ is SNP for all $m\geq 1$. 
In view of Remark~\ref{remark:quasistability}, $f$ is SNP if
$f(x_1,x_2,\ldots,x_m,0,0,\ldots)$ is SNP for any $m\geq \deg(f)$. 

Let $\alpha=(\alpha_1,\ldots,\alpha_k)\in {\mathbb Z}_{>0}^k$. The {\bf monomial quasisymmetric function} is defined as
\[M_{\alpha}=\sum_{i_1<i_2<\ldots<i_k} x_{i_1}^{a_1}\cdots
x_{i_k}^{a_k}.\]
Let ${\sf QSym}$ be the ${\mathbb Q}$-span of all $M_{\alpha}$.

\begin{Example}[$M_{\alpha}$ need not be SNP]
$M_{(2)}=p_2=x_1^2+x_2^2+\cdots$ does not have SNP. \qed
\end{Example}

Another basis of ${\sf QSym}$ is given by
{\bf Gessel's fundamental quasisymmetric functions}
\begin{equation}
\label{eqn:gesselfun}
F_{\alpha}=\sum_{\beta\to\alpha} M_{\beta}.
\end{equation}
Here, $\beta\to\alpha$ means that $\alpha$ is obtained
by successively adding adjacent parts of $\beta$. 

For a composition $\gamma$, let $\gamma^+$ be the composition formed by removing parts 
of size zero from $\gamma$.

\begin{Theorem} 
\label{prop:quasi}
${\sf Newton}(F_{\alpha}(x_1,\ldots,x_n)) = {\sf 
Newton}(M_{\alpha}(x_1,\ldots,x_n))\subset {\mathbb R}^n$. The vertices of this 
polytope are $\{ \gamma \in \mathbb{Z}_{\geq 0}^n : \gamma^+ = \alpha \}$.
\end{Theorem}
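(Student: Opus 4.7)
The plan is to pin down the exponent vectors of both $M_\alpha$ and $F_\alpha$, prove mutual containment of their Newton polytopes, and then isolate the vertices with an explicit linear functional.

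Write $\alpha=(\alpha_1,\ldots,\alpha_k)$. For each $k$-subset $S=\{i_1<\cdots<i_k\}\subseteq[n]$, let $\gamma_S\in\mathbb{Z}_{\geq 0}^n$ be the vector with $\gamma_S(i_j)=\alpha_j$ and zeros elsewhere. By definition, the exponent vectors of $M_\alpha(x_1,\ldots,x_n)$ are exactly the $\gamma_S$, equivalently the $\gamma\in\mathbb{Z}_{\geq 0}^n$ with $\gamma^+=\alpha$; by (\ref{eqn:gesselfun}), the exponent vectors of $F_\alpha(x_1,\ldots,x_n)$ are the $\gamma\in\mathbb{Z}_{\geq 0}^n$ with $\gamma^+\to\alpha$. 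Taking $\beta=\alpha$ in (\ref{eqn:gesselfun}) shows every $M_\alpha$-exponent is an $F_\alpha$-exponent, giving ${\sf Newton}(M_\alpha)\subseteq{\sf Newton}(F_\alpha)$ for free.

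For the reverse containment, take an exponent vector $\gamma$ of $F_\alpha$ with support $\{j_1<\cdots<j_l\}$ and values $\gamma(j_r)=\beta_r$, where $\beta\to\alpha$. The refinement $\beta\to\alpha$ partitions $\{j_1,\ldots,j_l\}$ into consecutive blocks $G_1,\ldots,G_k$ with $\sum_{r\in G_m}\beta_r=\alpha_m$. For each tuple $(s_1,\ldots,s_k)$ with $s_m\in G_m$, set $S=\{s_1<\cdots<s_k\}$ (automatically increasing since the blocks are consecutive) and
\[w_S=\prod_{m=1}^k\frac{\beta_{s_m}}{\alpha_m}.\]
These weights are nonnegative and sum to $1$, being a product of marginal sums each equal to $1$. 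A coordinate check at a position $j_r\in G_m$ collapses the sums over blocks $m'\neq m$ to $1$ and leaves $\alpha_m\cdot(\beta_r/\alpha_m)=\beta_r=\gamma(j_r)$, so $\gamma=\sum_S w_S\,\gamma_S$. Thus $\gamma\in{\sf Newton}(M_\alpha)$, proving ${\sf Newton}(F_\alpha)={\sf Newton}(M_\alpha)$.

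Finally, the vertices of ${\sf Newton}(M_\alpha)$ lie among its exponent vectors, so it suffices to show each $\gamma_S$ is a vertex. Fix $S$ and choose $c\in\mathbb{R}^n$ by $c_i=0$ for $i\in S$ and $c_i=1$ for $i\notin S$. For any $k$-subset $T=\{t_1<\cdots<t_k\}$,
\[c\cdot\gamma_T=\sum_{j:\,t_j\notin S}\alpha_j,\]
which equals $0$ for $T=S$ and is strictly positive for $T\neq S$ (some $t_j\notin S$ forces a contribution $\alpha_j\geq 1$). So this linear functional attains its minimum on ${\sf Newton}(M_\alpha)=\mathrm{conv}\{\gamma_T\}$ uniquely at $\gamma_S$, and $\gamma_S$ is a vertex.

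The only non-routine step is the product-measure construction in the middle paragraph; the key observation is that the datum of a refinement $\beta\to\alpha$ partitions the support of $\gamma$ into consecutive blocks whose masses match the parts of $\alpha$, after which the desired convex combination is a product of per-block distributions and its correctness is a one-line check.
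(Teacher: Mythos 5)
Your proof is correct, but it departs from the paper's argument in two ways worth noting. For the containment $\mathsf{Newton}(F_\alpha)\subseteq\mathsf{Newton}(M_\alpha)$, the paper inducts on the refinement: for a single split $\widehat\beta\to\beta$ (splitting $\beta_i$ into $\beta_i'+\beta_i''$), it writes an exponent of $M_{\widehat\beta}$ as a two-term convex combination $\frac{\beta_i'}{\beta_i}\beta^\circ+\frac{\beta_i''}{\beta_i}\beta^\bullet$ of exponents of $M_\beta$, then chains these steps. You instead write the exponent of $F_\alpha$ as a convex combination of exponents of $M_\alpha$ all at once, via the product-measure weights $w_S=\prod_m \beta_{s_m}/\alpha_m$ over choices of one position per block; this is essentially the composite of the paper's one-step mixtures, but phrased globally and verified by a single marginalization computation. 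For the vertex characterization, the paper embeds $\mathsf{Newton}(M_\alpha)$ in the permutahedron $\mathcal{P}_{\lambda(\alpha)}$, invokes its Proposition~\ref{prop:generalthing}(II) to identify the permutahedron's vertices, and then applies Lemma~\ref{lemma:PinQvertex} (a vertex of the outer polytope lying in the inner one is a vertex of the inner one); you instead exhibit, for each $\gamma_S$, an explicit linear functional ($c_i=0$ on $S$, $c_i=1$ off $S$) that is uniquely minimized at $\gamma_S$. Both routes are sound; the paper's is more modular in that it reuses two auxiliary results already established, while yours is more self-contained and elementary, at the small cost of slightly heavier bookkeeping in the block/support notation.
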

\begin{proof}
Each $M_{\beta}$ is a positive sum of monomials. Also, $\alpha\to\alpha$ so 
$M_{\alpha}$ appears in the expansion (\ref{eqn:gesselfun}). Therefore,
\[{\sf Newton}(F_{\alpha}(x_1,\ldots,x_n)) \supseteq {\sf 
Newton}(M_{\alpha}(x_1,\ldots,x_n)).\]

Suppose ${\beta}=(\beta_1,\beta_2,\ldots,
\beta_k)\in {\mathbb Z}_{>0}^k$ and $\widehat \beta\to\beta$ where 
\[{\widehat\beta}=(\beta_1,\beta_2,\ldots,\beta_i',\beta_{i}'',\ldots,\beta_k)\in {\mathbb Z}^{k+1}_{>0}\]
and $\beta_i=\beta_i'+\beta_{i}''$.

We wish to show
\begin{equation}
\label{eqn:mar4indstep}
{\sf Newton}(M_{{\widehat\beta}}(x_1,\ldots,x_n)) \subseteq {\sf 
Newton}(M_{\beta}(x_1,\ldots,x_n)).
\end{equation}
By induction, this implies the remaining containment
\[{\sf Newton}(F_{\alpha}(x_1,\ldots,x_n)) \subseteq {\sf 
Newton}(M_{\alpha}(x_1,\ldots,x_n)).\]

Suppose 
\[{\widetilde{\beta}}=({\widetilde{\beta_1}},\ldots,{\widetilde{\beta_n}})
\in {\mathbb Z}_{\geq 0}^n
\text{\ \ where $({\widetilde\beta})^+={\widehat\beta}$.}\]
Thus, 
\[{\widetilde{\beta}}=
(0,\ldots,0,\beta_1,0,\ldots,0,\beta_2,\ldots,\beta_i',0,\ldots,0,\beta_i'',
0,\ldots,0,\ldots,\beta_k,0,\ldots,0)\]
where we are depicting the additional $0$'s inserted between components of
${{\widehat\beta}}$ to obtain ${\widetilde{\beta}}$. In particular,
$x^{{\widetilde{\beta}}}$ appears in $M_{{\widehat\beta}}$.

Now let 
\[\beta^{\circ}=(0,\ldots,0,\beta_1,0,\ldots,0,\beta_2,\ldots,\beta_i,0,\ldots,0,0,0,
\ldots,0,\ldots,\beta_k,0,\ldots,0)\]
and
\[\beta^{\bullet}=(0,\ldots,0,\beta_1,0,\ldots,0,\beta_2,\ldots,0,0,\ldots,0,\beta_i,0,
\ldots,0,\ldots,\beta_k,0,\ldots,0).\]
That is $\beta^{\circ}$ and $\beta^{\bullet}$ differ from ${\widetilde{\beta}}$
only by replacing $\beta_i'$ and $\beta_i''$ by $\beta_i$, respectively.

Since $\beta_i',\beta_{i}''\geq 0$, we have that
\[{\widetilde{\beta}}=\frac{\beta_i'}{\beta_i}\beta^{\circ}+
\frac{\beta_i^{''}}{\beta_i}\beta^{\bullet}\]
is a convex combination. This proves
(\ref{eqn:mar4indstep}) and hence the asserted equality of Newton polytopes.

Every monomial of $M_{\alpha}(x_1,\ldots,x_n)$ is a monomial of 
$m_{\alpha}(x_1,\ldots,x_n)$. Therefore,
\[{\sf Newton}(M_{\alpha}(x_1,\ldots,x_n))\subseteq 
{\sf Newton}(m_{\alpha}(x_1,\ldots,x_n)).\]
Recall,
\[{\sf Newton}(m_{\alpha}(x_1,\ldots,x_n))={\mathcal P}_{\lambda(\alpha)}
\subseteq {\mathbb R}^n.\] 
One knows
the vertices of ${\mathcal P}_{\lambda(\alpha)}$ are all rearrangements of
$\alpha$ (thought of as a vector in ${\mathbb Z}^n_{\geq 0}$, where we concatenate
$0$'s as necessary); cf.~Proposition~\ref{prop:generalthing}(II). Thus,
every exponent vector of $m_{\alpha}(x_1,\ldots,x_n)$ is also a vertex of
${\mathcal P}_{\lambda(\alpha)}$. Hence to obtain the final claim of the theorem 
we may appeal to 
Lemma~\ref{lemma:PinQvertex}.
\end{proof}

\begin{Example}[$F_{\alpha}$ need not be SNP]
One can also
check that
\[F_{(2,2)} = M_{(2,2)} + M_{(2,1,1)} + M_{(1,1,2)} + M_{(1,1,1,1)}.\] 
Thus, 
$(0,1,2,1) = \frac{1}{2}(0,2,2,0) + \frac{1}{2}(0,0,2,2) 
\in {\sf Newton}(F_{(2,2)})$.
However, $(0,1,2,1)$ is not an exponent vector of $F_{(2,2)}$. 
Hence $F_{(2,2)}$ is not SNP. \qed
\end{Example}

J.~Hagland-K.~Luoto-S.~Mason-S.~van Willigenburg \cite{quasischur}
introduced the {\bf quasisymmetric Schur polynomial}:
\[S_{\alpha}=\sum_{\gamma} A_{\gamma}\]
where the sum is over all compositions $\gamma$ such that $\gamma^+=\alpha$ and where $\gamma^+$ is the composition
$\gamma$ with any $0$ parts removed. ${\sf QSym}$ is also spanned
by $\{S_{\alpha}\}$. Also, recall $A_{\gamma}$ is the Demazure atom defined in
Section~\ref{subsection:keysandatoms}. 

Many aspects of quasi-Schur theory 
are parallel to Schur theory \cite{quasischur}. For instance,
consider the transition between the $S$ and $M$ bases
of ${\sf QSym}$:
\[S_{\alpha}=\sum_{\beta}{\overline {\sf K}}_{\alpha,\beta}M_{\beta}.\]
It is proved in \emph{loc.~cit.} that ${\overline {\sf K}}_{\alpha,\beta}$ counts \emph{composition tableaux}. Hence
${\overline {\sf K}}_{\alpha,\beta}$ is an analogue of the Kostka coefficient.
However, there are divergences from the perspective of Newton polytopes as seen in the next three examples:

\begin{Example}[$S_{\alpha}$ need not be SNP] 
An example is $S_{(2,1,3)}$. In at least four variables, $x_1x_2^2x_3^2x_4$ does not appear but $x_1^2x_2^2x_3^2$ and $x_2^2x_3^2x_4^2$ both do.  Nonetheless, it should be interesting to describe the Newton polytope, and to characterize when $S_{\alpha}$ is SNP.\qed
\end{Example}

\begin{Example}
In the symmetric function case, 
\[{\sf Newton}(s_{\lambda}(x_1,\ldots,x_n))={\sf Newton}(m_{\lambda}(x_1,\ldots,x_n))={\mathcal P}_{\lambda}\subset {\mathbb R}^n.\] 
However,
\[(0,0,2,2)\in {\sf Newton}(S_{(1,3)}(x_1,x_2,x_3,x_4)) \text{\ \  but \ \  
 $(0,0,2,2)\not\in {\sf Newton}(M_{(1,3)}(x_1,x_2,x_3,x_4))$.}\]
Hence ${\sf Newton}(S_{\alpha}(x_1,\ldots,x_n)) \neq {\sf Newton}(M_{\alpha}(x_1,\ldots,x_n))$ in general.  \qed
\end{Example}

\begin{Example}
We may define a dominance order $\preceq_D'$ on strict compositions by 
$\alpha \preceq_D' \beta$ if ${\sf Newton}(M_{\alpha})\subseteq {\sf Newton}(M_{\beta})$. The above example shows that 
${\overline {\sf K}}_{\alpha,\beta}>0$ if and only if
$\beta\preceq_D' \alpha$ is not generally true. This is in contrast with (\ref{eqn:containDominance}).\qed
\end{Example}

\section{Schubert polynomials and variations}
\label{sec:schub}

\subsection{The Schubert SNP conjectures}
\label{sec:schubsnpsec}
A.~Lascoux--M.-P.~Sch\"{u}tzenberger \cite{LS1} 
introduced {\bf Schubert polynomials}. If $w_0=n \ n-1 \ \cdots 2 \ 1 $ (in one-line notation)
is the longest length permutation 
in $S_n$ then 
\[{\mathfrak S}_{w_0}(x_1,\ldots,x_n):=x_1^{n-1}x_2^{n-2}\cdots x_{n-1}.\] 
Otherwise, $w\neq w_0$ and 
there exists $i$ such that $w(i)<w(i+1)$. Then one sets
\[{\mathfrak S_w}(x_1,\ldots,x_n)=\partial_i {\mathfrak S}_{ws_i}(x_1,\ldots,x_n), \text{\ where
$\partial_i f:= \frac{f-s_if}{x_i-x_{i+1}}$,}\]
and $s_i$ is the simple transposition swapping $i$ and $i+1$. 
Since $\partial_i$ satisfies
\[\partial_i\partial_j=\partial_j\partial_i \text{\ for $|i-j|>1$, and \ } 
\partial_i\partial_{i+1}\partial_i=\partial_{i+1}\partial_i\partial_{i+1},\]
the above description of ${\mathfrak S}_w$ is well-defined. In addition,
under the inclusion 
$\iota: S_n\hookrightarrow S_{n+1}$ defined by
$w(1)\cdots w(n) \mapsto w(1) \ \cdots w(n) \ n+1$,
we have ${\mathfrak S}_w={\mathfrak S}_{\iota(w)}$. Thus one unambiguously
refers to ${\mathfrak S}_w$ for each $w\in S_{\infty}=\bigcup_{n\geq 1} S_n$. 

\begin{Conjecture}
\label{conj:main2} 
${\mathfrak S}_w$ has SNP.
\end{Conjecture}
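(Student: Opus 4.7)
The plan is to deduce SNP from the Newton polytope description of Conjecture~\ref{conj:main1}, which posits ${\sf Newton}({\mathfrak S}_w) = {\mathcal S}_{{\sf D}_w}$, together with a lattice-point analysis of the Schubitope. I would work with the Billey--Jockusch--Stanley pipe dream (RC-graph) expansion
\[{\mathfrak S}_w = \sum_{P \in {\rm PD}(w)} x^{{\rm wt}(P)},\]
where ${\rm wt}(P) \in {\mathbb Z}_{\geq 0}^n$ records the number of crosses in each row. Because this expansion is monomial-positive, the exponent vectors are exactly the row-weight vectors of reduced pipe dreams for $w$, so the problem reduces to describing these row-weights as the lattice points of the Schubitope.

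For the containment ${\sf Newton}({\mathfrak S}_w) \subseteq {\mathcal S}_{{\sf D}_w}$, I would argue column-by-column. Fix $S \subseteq [n]$ and a column $c$: the crosses of $P$ in column $c$ lying in rows of $S$ should be bounded by $\theta_{{\sf D}_w}^c(S)$, and summing over $c$ gives $\sum_{i \in S} \alpha_i \leq \theta_{{\sf D}_w}(S)$. The parenthesis pairing in the definition of $\theta_{{\sf D}_w}^c(S)$ is precisely the bookkeeping needed to track how Rothe boxes in column $c$ can migrate upward under chute moves without colliding: an unmatched open parenthesis marks an $S$-row that cannot receive a cross, while a $\star$ records a Rothe box already lying in $S$ that contributes to the $S$-count regardless of moves.

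For the reverse containment ${\sf Newton}({\mathfrak S}_w) \supseteq {\mathcal S}_{{\sf D}_w}$ --- the content of SNP --- the strategy is to realize each lattice point $\alpha \in {\mathcal S}_{{\sf D}_w}$ as the row-weight of some reduced pipe dream. A natural attempt is to start from the canonical pipe dream whose weight is the Lehmer code $c(w)$ (a vertex of the Schubitope, in the spirit of Proposition~\ref{thm:first}) and apply a sequence of chute and ladder moves driving the weight from $c(w)$ toward $\alpha$. I would organize this as an induction on a suitable partial order --- either the dominance order for permutations defined in Section~\ref{sec:schub} or a statistic measuring lattice distance from $c(w)$ inside the Schubitope --- with the Schubitope inequalities certifying at each step that some legal move reducing the distance is available.

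The main obstacle is exactly this reverse containment: converting the inequalities defining ${\mathcal S}_{{\sf D}_w}$ into a constructive guarantee of a pipe dream realizing a prescribed row-weight. For special families the task simplifies: Grassmannian $w$ reduces to Proposition~\ref{prop:generalthing}(III), and vexillary $w$ reduces to a flagged Schur analogue of Rado's theorem. For general $w$, one auxiliary route is the transition recursion ${\mathfrak S}_w = x_r {\mathfrak S}_v + \sum_u {\mathfrak S}_u$; however, since sums do not preserve SNP (as stressed in the introduction), one must establish a strengthened inductive invariant relating the monomial supports of the summands to the Schubitope combinatorics. Alternatively, one could seek a direct matroidal or transportation-polytope interpretation of ${\mathcal S}_{{\sf D}_w}$, which would make the lattice-point realization transparent and bypass the need for explicit move sequences.
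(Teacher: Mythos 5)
This statement is a conjecture in the paper, not a theorem: the authors do not prove it, and offer only computational verification for all $w \in S_n$ with $n \leq 8$, plus two implications (Conjecture~\ref{conj:double} $\Rightarrow$ Conjecture~\ref{conj:main2}, and Conjecture~\ref{conj:main1} would yield the Newton polytope explicitly). Your proposal is therefore a research plan rather than a proof, and it stalls exactly where the paper does.

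Your outline of the ``easy'' containment ${\sf Newton}({\mathfrak S}_w) \subseteq {\mathcal S}_{{\sf D}_w}$ is essentially what the paper establishes as Proposition~\ref{thm:onecontainment}, except the paper argues via Kohnert diagrams rather than pipe dreams: each Kohnert move migrates a box strictly northward, so the count of boxes of a diagram in rows $S$ and column $c$ is bounded by $T_{{\sf D}_w,S,c} + U_{{\sf D}_w,S,c} = \theta_{{\sf D}_w}^c(S)$. Your parenthesis-matching heuristic captures the same bookkeeping. Translating to chute moves on pipe dreams is a routine reformulation of this same bound, so this half is genuinely on solid ground.

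The genuine gap is the reverse direction, and your proposal acknowledges but does not close it. You need a constructive lemma of the form: if $\alpha$ is a lattice point of ${\mathcal S}_{{\sf D}_w}$, then there exists a reduced pipe dream for $w$ with row-weight $\alpha$. Your suggested induction (on dominance order or on some distance statistic within the Schubitope, driving from the Lehmer-code vertex toward $\alpha$ by chute/ladder moves) is not supported by any lemma guaranteeing a legal move exists at each step; that lemma is precisely the open content of Conjecture~\ref{conj:main1}. The paper itself warns against several of your fallback ideas: Example~\ref{exa:partialsubtle} shows $\partial_i$ and $\pi_i$ do not preserve SNP, so a naive inductive descent through divided-difference operators breaks, and the introduction notes sums need not preserve SNP, which obstructs the transition recurrence ${\mathfrak S}_w = x_r {\mathfrak S}_v + \sum_u {\mathfrak S}_u$ unless one proves a much stronger invariant about how the Schubitopes of the summands tile or nest inside ${\mathcal S}_{{\sf D}_w}$. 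Until such a lemma is supplied, the argument is incomplete and, as far as this paper is concerned, the statement remains conjectural.
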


We have checked Conjecture~\ref{conj:main2} for all $w\in S_n$ where $n\leq 8$.

Let $X=\{x_1,x_2,\ldots\}$ and $Y=\{y_1,y_2,\ldots\}$.
The {\bf double Schubert polynomial} ${\mathfrak S}_{w}(X;Y)$ is defined
by setting
\[{\mathfrak S}_{w_0}(X;Y)=\prod_{i+j\leq n} (x_i-y_j)\]
and recursively determining ${\mathfrak S}_w(X;Y)$ for $w\neq w_0$ precisely as 
for ${\mathfrak S}_w(X)$. 

We have also checked for $n\leq 5$ (and many other cases) that:

\begin{Conjecture}
\label{conj:double}
${\mathfrak S}_{w}(X;Y)$ is SNP.
\end{Conjecture}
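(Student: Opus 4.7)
The plan is to attack Conjecture~\ref{conj:double} by reducing to a monomial-positive form, invoking a classical expansion of double Schubert polynomials, and then leveraging Conjecture~\ref{conj:main2} together with a combinatorial study in the spirit of the Gale--Ryser/Barvinok argument used in Theorem~\ref{prop:resultantSNP}.

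First I would perform the substitution $y_j \mapsto -y_j$ and set $\widetilde{\mathfrak S}_w(X;Y) := {\mathfrak S}_w(X;-Y)$. Since this only flips signs of monomials, one has ${\sf Newton}(\widetilde{\mathfrak S}_w(X;Y))={\sf Newton}({\mathfrak S}_w(X;Y))$, so it suffices to prove SNPness of $\widetilde{\mathfrak S}_w$. The gain is the Billey--Jockusch--Stanley / Fomin--Kirillov pipe-dream formula
\[
\widetilde{\mathfrak S}_w(X;Y) \;=\; \sum_{D \in {\rm PD}(w)} \prod_{(i,j) \in D}(x_i + y_j),
\]
which displays $\widetilde{\mathfrak S}_w$ as manifestly monomial-positive: expanding each binomial, its monomials $X^{\alpha}Y^{\beta}$ are indexed by pairs $(D,S)$ with $D \in {\rm PD}(w)$, $S \subseteq D$, $\alpha={\rm rowwt}(S)$, $\beta={\rm colwt}(D\setminus S)$.

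Next I would apply a Lascoux--Macdonald-type expansion writing $\widetilde{\mathfrak S}_w(X;Y)$ as a nonnegative integer combination of products ${\mathfrak S}_u(X)\,{\mathfrak S}_{v}(Y)$ over pairs of permutations $(u,v)$ with $\ell(u)+\ell(v)=\ell(w)$ subject to a Bruhat-type condition tying them to $w$. Granting Conjecture~\ref{conj:main2}, each factor is SNP; because $X$ and $Y$ are disjoint variable sets, each product is SNP with Newton polytope the Cartesian product ${\sf Newton}({\mathfrak S}_u(X)) \times {\sf Newton}({\mathfrak S}_{v}(Y))$. Therefore
\[
{\sf Newton}(\widetilde{\mathfrak S}_w(X;Y)) \;=\; {\rm conv}\!\left(\bigcup_{(u,v)} {\sf Newton}({\mathfrak S}_u(X)) \times {\sf Newton}({\mathfrak S}_{v}(Y))\right).
\]

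The main obstacle is the final step: showing every lattice point of this convex hull actually lies in some single Cartesian product on the right. Convex hulls of unions of (even SNP) polytopes can pick up new lattice points, so SNPness is \emph{not} automatic from the decomposition. To overcome this, I would aim to formulate a ``double Schubitope'' ${\mathcal S}^{(2)}_{{\sf D}_w} \subseteq {\mathbb R}^n \times {\mathbb R}^n$ via explicit linear inequalities generalizing the $\theta_{\sf D}^c(S)$ construction to a bipartite row/column statistic on the Rothe diagram, and verify directly, via the pipe-dream enumeration above, that every lattice point of ${\mathcal S}^{(2)}_{{\sf D}_w}$ is realized by some $(D,S)$. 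The Grassmannian/rectangular case provides both a proven anchor and a template: by Example~\ref{exa:doubleschubres} it reduces to the resultant, for which Theorem~\ref{prop:resultantSNP} is established via Barvinok's approximate log-concavity of Gale--Ryser counts. The hope is that a Schubitope-flavoured extension of this log-concavity to general diagrams ${\sf D}_w$ underlies the general case.
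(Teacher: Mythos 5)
This statement is a \emph{conjecture} in the paper, not a theorem. The authors offer no proof: they report only that it has been checked computationally for $w\in S_n$ with $n\leq 5$ ``and many other cases,'' and they observe that it would strengthen Conjecture~\ref{conj:main2} via the specialization $Y=0$. There is therefore no ``paper's own proof'' against which to compare your argument; the relevant question is whether your proposal closes the conjecture, and it does not.

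Your reduction steps are sensible and essentially correct in spirit. The sign flip $y_j\mapsto -y_j$ preserves the Newton polytope, the Fomin--Kirillov/Billey--Jockusch--Stanley pipe-dream expansion of ${\mathfrak S}_w(X;-Y)$ is monomial-positive, and the Lascoux/Macdonald coproduct writes ${\mathfrak S}_w(X;-Y)$ as a nonnegative sum of products ${\mathfrak S}_u(X){\mathfrak S}_v(Y)$ over pairs with $\ell(u)+\ell(v)=\ell(w)$ and a Bruhat-type factorization constraint. It is also true that a product of SNP polynomials in disjoint variable sets is SNP, with Newton polytope the Cartesian product. However, your argument then rests on two unresolved points, both of which are genuine gaps. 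First, it is conditional on Conjecture~\ref{conj:main2}, which is itself open in the paper; so even granting everything else you would only obtain an implication, not a proof. Second, and more seriously --- as you correctly flag yourself --- the convex hull of the union of the Cartesian products can acquire lattice points that lie in no single piece. The paper itself exhibits exactly this failure mode for sums of SNP polynomials (Example~\ref{exa:SchurposdotimplySNP}), so this is not a technicality. Your proposed ``double Schubitope'' and a Barvinok-style log-concavity bound are the right \emph{kind} of machinery to aim for given that the Grassmannian case of Conjecture~\ref{conj:double} is precisely the resultant of Theorem~\ref{prop:resultantSNP}, but neither the inequalities nor the log-concavity statement is formulated, let alone proved, so the key step is missing rather than merely deferred. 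In short: correct reductions, honest identification of the obstruction, but no proof --- which is consistent with the statement's status as an open conjecture in the paper.
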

 
Since ${\mathfrak S}_{w}(X;0)={\mathfrak S}_{w}(X)$, Conjecture~\ref{conj:double}
implies Conjecture~\ref{conj:main2}.

\begin{Example}[$\partial_i$ and $\pi_i$ does not preserve SNP]\label{exa:partialsubtle}
This polynomial is SNP:
\[f=x_1^4+x_1^3 x_2 +x_1^2 x_2^2 +2x_1 x_2^3.\]
However
\[\partial_1(f)=x_1^3+x_2^3\]
is not SNP. 

Since $\pi_i(g)=\partial_i(x_i\cdot g)$, if we
set 
\[g=x_1^3+x_1^2x_2+x_1x_2^2+2x_2^3\] 
we have
$\pi_1(g)=\partial_1(f)$. Hence, $\pi_i$
does not preserve SNP.
\qed  
\end{Example}
\begin{Example}[Double Schubert polynomials are generalized resultants]\label{exa:doubleschubres}
Pick $w$ to be the ``dominant'' permutation 
$n+1 \ n+2 \ \cdots n+m \ 1 \ 2 \ \cdots \ n \in S_{n+m}$. Then
\[{\mathfrak S}_w(X;Y)=\prod_{i=1}^n\prod_{j=1}^m (x_i-y_j).\]
(One reference is \cite[Proposition~2.6.7]{Manivel}.)
This has the same Newton polytope as $R(f,g)$. Thus Conjecture~\ref{conj:double} is
proposes a generalization of Theorem~\ref{prop:resultantSNP}.\qed
\end{Example}

A.~Lascoux--M.-P.~Sch\"utzenberger also introduced the family of
 {\bf Grothendieck polynomials} \cite{lascoux.schutzenberger}.
These polynomials are defined using
\[\overline{\pi}_i(f)=\partial_i((1-x_{i+1})f).\]
For $w_0\in S_n$ declare
\[{\mathfrak G}_{w_0}(X)=x_1^{n-1}x_2^{n-2}\cdots x_{n-1}.\] 
If $w\in S_n$ and $w\neq w_0$, let
\[{\mathfrak G}_w(X)={\overline{\pi}}_i({\mathfrak G}_{ws_i})\] 
if $i$ is an ascent of $w$.
This is an inhomogenous analogue of the Schubert polynomial since
\[{\mathfrak G}_w(X)={\mathfrak S}_w(X)+\mbox{(higher degree terms)}.\]
Like the Schubert polynomials, ${\mathfrak G}_w={\mathfrak G}_{\iota(w)}$, where $\iota:S_n\hookrightarrow
S_{n+1}$ is the natural inclusion. Hence it make sense to
define ${\mathfrak G}_w$ for $w\in S_{\infty}$.
 
\begin{Conjecture}
\label{conj:grothendieckSNP}
${\mathfrak G}_w$ has SNP.
\end{Conjecture}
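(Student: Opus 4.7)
The plan is to first conjecture the Newton polytope of ${\mathfrak G}_w$ and then establish SNP in two steps. Motivated by the Fomin--Kirillov pipe dream formula
\[{\mathfrak G}_w = \sum_{D \in \mathcal{PD}(w)} (-1)^{|D|-\ell(w)} x^D,\]
a natural candidate is
\[{\sf Newton}({\mathfrak G}_w) = {\rm conv}\{\alpha : x^\alpha = x^D \mbox{ for some } D \in \mathcal{PD}(w)\}.\]
The inclusion ``$\subseteq$'' is immediate, while ``$\supseteq$'' reduces to showing that extreme pipe dream exponents survive sign cancellation. Two easy subfamilies of extreme exponents are those coming from reduced pipe dreams (vertices predicted by the Schubitope for ${\mathfrak S}_w$ via Conjecture~\ref{conj:main1}) and those coming from maximal-degree pipe dreams; between them one expects a complete list of vertices for the conjectured polytope.

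To prove SNP, one must show that every lattice point $\alpha$ of the polytope satisfies two conditions: (i) $\alpha = x^D$ for some $D \in \mathcal{PD}(w)$, and (ii) the signed sum $\sum_{D \in \mathcal{PD}(w),\, x^D = x^\alpha}(-1)^{|D|-\ell(w)}$ is nonzero. Condition (i) is a convexity assertion and would be attacked by exhibiting local `pipe dream moves' (analogues of the Kohnert moves discussed in Section~\ref{sec:schub}) that generate every lattice point of the polytope from an initial realized one. Condition (ii) is the true core of the matter, and one would seek a sign-reversing involution on $\{D \in \mathcal{PD}(w) : x^D = x^\alpha\}$ whose fixed point set has odd cardinality; such involutions are available in special cases (Hecke/$K$-insertion, ladder moves, Lenart's formulas) and we would aim to construct a uniform one from `redundant' crossing/elbow pairs in pipe dreams.

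The main obstacle is controlling the sign cancellations of (ii) uniformly in $w$ and $\alpha$, since the alternating nature of the Fomin--Kirillov formula obstructs a direct SNP argument in the style of Proposition~\ref{prop:generalthing}. A potentially cleaner route that sidesteps signs entirely is to establish a positive combinatorial formula for ${\mathfrak G}_w$ as a sum of monomials over a combinatorial set (such formulas are known for Grassmannian and vexillary $w$ via set-valued tableaux, but the general case remains open). If achieved, SNP of ${\mathfrak G}_w$ would reduce to analyzing an inhomogeneous generalization of the Schubitope that records all pipe dream exponents at every degree $d$ between $\ell(w)$ and $\#{\sf D}_w$; the SNP claim should then follow by an extension of the dominance-style machinery developed in Section~\ref{sec:schub} for ${\mathfrak S}_w$, with Conjecture~\ref{conj:main2} providing the base-degree input.
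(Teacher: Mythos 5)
The statement you are addressing is left as a conjecture in the paper; the authors offer no proof, only exhaustive verification for $n\leq 7$ and the observation that $\mathsf{Newton}(\mathfrak{S}_w) = \mathsf{Newton}(\mathfrak{G}_w)\cap\{\alpha:\sum_i\alpha_i=\#\mathsf{D}_w\}$, which places Conjecture~\ref{conj:grothendieckSNP} as a strengthening of Conjecture~\ref{conj:main2}. Your write-up is a research program, not a proof, and you are forthright that the core steps remain open, so there is no complete argument to certify.

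That said, there is a substantive error in your identification of the ``true core of the matter.'' You present condition~(ii), the nonvanishing of the signed sum $\sum_{D\in\mathcal{PD}(w),\,x^D=x^\alpha}(-1)^{|D|-\ell(w)}$, as the central obstacle requiring a sign-reversing involution. But in the pipe dream formula the exponent $x^D$ has degree equal to the number of crossings $|D|$, so every $D$ contributing to a fixed monomial $x^\alpha$ has $|D|=|\alpha|$, and hence $(-1)^{|D|-\ell(w)}$ is \emph{constant} over the sum. There is no sign cancellation to control: the signed sum equals $(-1)^{|\alpha|-\ell(w)}$ times the cardinality of $\{D\in\mathcal{PD}(w):x^D=x^\alpha\}$, so condition~(ii) is automatic whenever condition~(i) holds. (This is also the content of the known sign-alternation of Grothendieck coefficients, due to the subword-complex/Gr\"obner-degeneration picture.) The real difficulty is entirely in~(i): showing that the set of pipe dream weight vectors is saturated in its convex hull, and that the convex hull is the inhomogeneous Schubitope-type polytope you gesture at. Your proposed ``pipe dream moves'' analogous to Kohnert moves are the right target, but this is where the argument is genuinely missing, and the Kohnert analogy in Section~\ref{sec:schub} only delivers a one-sided containment even in the homogeneous Schubert case. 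The alternate route via a positive formula is parallel, not independent: since there is already no sign cancellation at fixed exponent, a positive formula would again reduce to saturating the support, and at the time of the paper such formulas were available only for Grassmannian and vexillary $w$.
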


Conjecture~\ref{conj:grothendieckSNP} has been exhaustively checked
for $n\leq 7$. Conjecture~\ref{conj:grothendieckSNP} generalizes Conjecture~\ref{conj:main2} since  
\[{\sf Newton}({\mathfrak S}_w)={\sf Newton}({\mathfrak G}_w)
\cap \left\{(\alpha_1,\ldots,\alpha_n)\in {\mathbb R^n}: \sum_{i=1}^n \alpha_i=\#{\sf D}_w\right\}.\]

Grothendieck polynomials arise in combinatorial $K$-theory. Another family
of polynomials from this topic was introduced by
A.~Lascoux in \cite{Lascoux:trans}. He defines $\Omega_{\alpha}$
for $\alpha=(\alpha_1,\alpha_2,\ldots) \in {\mathbb Z}_{\geq 0}^{\infty}$ by replacing $\pi_i$ in the definition
of $\kappa_{\alpha}$ with 
\[\tau_i(f)=\partial_i(x_i(1-x_{i+1})f).\]
The initial condition is
$\Omega_{\alpha}=x^{\alpha}(=\kappa_{\alpha}), \text{\ \ if $\alpha$ is weakly decreasing.}$
$\Omega_{\alpha}$ is an inhomogeneous analogue of $\kappa_{\alpha}$. 

\begin{Conjecture}
\label{conj:mar7bhh}
${\Omega}_\alpha$ has SNP.
\end{Conjecture}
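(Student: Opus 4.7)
The plan is to mirror the structural relationship between Conjecture~\ref{conj:grothendieckSNP} (Grothendieck) and Conjecture~\ref{conj:main2} (Schubert), applied now to keys $\kappa_\alpha$ and their inhomogeneous analogues $\Omega_\alpha$. A direct computation using the Leibniz rule for $\partial_i$ gives $\tau_i(f)=\pi_i(f)-x_ix_{i+1}\partial_i(f)$, so that $\Omega_\alpha=\kappa_\alpha+(\text{terms of strictly higher degree})$, exactly paralleling ${\mathfrak G}_w={\mathfrak S}_w+(\text{higher degree})$. My first step would be to pin down a conjectural description of ${\sf Newton}(\Omega_\alpha)$ that generalizes the Schubitope ${\mathcal S}_{{\sf D}_\alpha}$ from Conjecture~\ref{conj:keytopeineq}, by relaxing the degree equality $\sum\alpha_i=\#{\sf D}$ to $\#{\sf D}\leq \sum\alpha_i\leq N_{{\sf D}_\alpha}$ for a sharp upper bound $N_{{\sf D}_\alpha}$, while retaining the Schubitope inequalities $\sum_{i\in S}\alpha_i\leq \theta_{{\sf D}_\alpha}(S)$. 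A sensible sanity check, parallel to the formula for ${\sf Newton}({\mathfrak S}_w)$ cut out of ${\sf Newton}({\mathfrak G}_w)$, is to expect
\[{\sf Newton}(\kappa_\alpha)={\sf Newton}(\Omega_\alpha)\cap\left\{(\alpha_1,\ldots,\alpha_n)\in{\mathbb R}^n:\sum_{i=1}^n\alpha_i=|\alpha|\right\},\]
which can be tested experimentally on the same range ($|\alpha|\leq 7$) where the neighboring conjectures have been verified.

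Next I would establish monomial positivity of $\Omega_\alpha$. Lascoux's polynomials admit a combinatorial expansion $\Omega_\alpha=\sum_T x^{{\rm wt}(T)}$ indexed by an appropriate family of set-valued skyline objects (a K-theoretic analogue of the tableaux underlying Conjecture~\ref{conj:keyBruhatCondition}); granting such a formula, one has $[x^\beta]\Omega_\alpha\geq 0$ for all $\beta$, so SNP is reduced to showing that every lattice point of the candidate polytope is the weight of some $T$. For the bottom degree slice $|\alpha|$, this is precisely Conjecture~\ref{conj:kappaSNP}; for higher degree slices one argues by induction on degree, constructing a tableau realizing a lattice point $\beta$ of degree $|\alpha|+k$ by adding a single extra marking to a tableau realizing a lattice neighbor of degree $|\alpha|+k-1$, and checking that the Schubitope inequalities remain tight after adding markings.

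Finally, the main obstacle appears twofold. First, Conjecture~\ref{conj:kappaSNP} for the keys is itself open, and any proof of SNPness of $\Omega_\alpha$ genuinely subsumes it at the lowest degree, so the argument must either reduce to, or simultaneously establish, the key case. Second --- and this is the truly new difficulty --- the operator $\tau_i$ can produce monomials whose supports differ appreciably from those in $\kappa_\alpha$, as Example~\ref{exa:partialsubtle} already foreshadows for $\pi_i$; pinning down the upper bound $N_{{\sf D}_\alpha}$ on the degree, and verifying that all intermediate degree slices are saturated (not merely the bottom and top), is where I expect most of the work to live, and where the analogy with the Grothendieck case offers only partial guidance.
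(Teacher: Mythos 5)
The statement you are asked to prove is labeled a \emph{Conjecture} in the paper, and the authors provide no proof --- only computational verification for $|\alpha|\leq 7$ with at most three zero parts. So there is no ``paper's own proof'' against which to compare; the honest assessment is whether your plan closes the gap, and it does not, though you are commendably upfront about this. Your preliminary calculation $\tau_i(f)=\pi_i(f)-x_ix_{i+1}\partial_i(f)$ is correct (using that $x_ix_{i+1}$ is $s_i$-invariant), and it does yield $\Omega_\alpha=\kappa_\alpha+(\text{higher degree})$, so the analogy with the Grothendieck/Schubert pair is sound. Your proposed slice identity ${\sf Newton}(\kappa_\alpha)={\sf Newton}(\Omega_\alpha)\cap\{\sum_i\alpha_i=|\alpha|\}$ exactly parallels the stated relation between ${\sf Newton}({\mathfrak S}_w)$ and ${\sf Newton}({\mathfrak G}_w)$, and a set-valued skyline model for $\Omega_\alpha$ is plausible via \cite{Monical} and the atom decomposition $\Omega_\alpha=\sum_{\gamma\preceq\alpha}\mathcal{L}_\gamma$. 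But each of these moves introduces a new conjectural ingredient rather than discharging one.

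The genuine gap sits in your middle paragraph. You reduce SNPness of the bottom-degree slice to Conjecture~\ref{conj:kappaSNP} (which the paper also leaves open), and then for the higher slices you assert that any lattice point of degree $|\alpha|+k$ can be realized by ``adding a single extra marking'' to a filling realizing a lattice point of degree $|\alpha|+k-1$, ``checking that the Schubitope inequalities remain tight.'' That sentence is the entire proof, and it is not carried out or even made precise: you would need a local move on set-valued skyline fillings that (a) increments degree by exactly one, (b) realizes a prescribed exponent increment $e_i$, and (c) never leaves the combinatorial model, uniformly over all lattice points of the candidate polytope. Nothing in the paper, and nothing in your proposal, supplies such a move. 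Your own citation of Example~\ref{exa:partialsubtle} actually cuts against the operator-theoretic route: since $\pi_i$ (and hence $\tau_i$) does not preserve SNP, there is no reason grounded in the defining recursion to expect $\Omega_\alpha$ to be SNP; if the conjecture is true it must be for global combinatorial reasons, and those remain to be found. So the proposal is a reasonable research map, consistent with the paper's framework, but it is a plan with the hard steps still open --- which matches the status of the statement in the paper.
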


The {\bf Lascoux atom} $\mathcal{L}_\alpha$ is defined \cite{Monical} by replacing $\pi_i$ in the definition of $\kappa_\alpha$ with \[ \widehat{\tau}_i(f) = (\tau_i-1)f. \]

\begin{Conjecture}
\label{conj:mar7yyy}
$\mathcal{L}_\alpha$ has SNP.
\end{Conjecture}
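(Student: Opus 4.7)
The plan is to proceed by analogy with the relationship between the Demazure atom $A_\alpha$, the key polynomial $\kappa_\alpha$, and their $K$-theoretic lift $\Omega_\alpha$. The Lascoux atom $\mathcal{L}_\alpha$ stands to $\Omega_\alpha$ exactly as $A_\alpha$ stands to $\kappa_\alpha$: built from the same divided-difference recursion with $\pi_i$ replaced by $\tau_i$ and the ``atomic'' operator $\widehat{\pi}_i = \pi_i - 1$ replaced by $\widehat{\tau}_i = \tau_i - 1$. This suggests attacking Conjecture~\ref{conj:mar7yyy} by transferring the proof strategy of Theorem~\ref{cor:vertexkeys} to the inhomogeneous setting, using Conjecture~\ref{conj:mar7bhh} (SNP of $\Omega_\alpha$) as input in place of Conjecture~\ref{conj:kappaSNP}.

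First, I would establish the Lascoux analogue of (\ref{eqn:kappa2atom}), namely a positive decomposition
\[\Omega_\alpha = \sum_{\gamma} c_{\alpha,\gamma}\, \mathcal{L}_\gamma\]
with $c_{\alpha,\alpha}=1$ and each $c_{\alpha,\gamma}\in \mathbb{Z}_{\geq 0}$, indexed by compositions refining the set $\{\gamma : \gamma \preceq \alpha\}$. Since $\widehat{\tau}_i$ and $\tau_i$ differ only by the identity operator, the same inductive argument on Bruhat order used in \cite[Lemma~3.5]{Pun} should go through. Second, I would observe that the bottom-degree component of $\mathcal{L}_\alpha$ is exactly $A_\alpha$: indeed, $\widehat{\tau}_i = \widehat{\pi}_i - \partial_i\circ(x_ix_{i+1}\cdot\,)$, so the lowest-degree piece of $\mathcal{L}_\alpha$ satisfies the defining recursion of $A_\alpha$. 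Combined with Conjecture~\ref{conj:AisSNP}, this pins down the degree-$|\alpha|$ slice of $\mathcal{L}_\alpha$.

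Third, the decomposition would give ${\sf Newton}(\mathcal{L}_\alpha) \subseteq {\sf Newton}(\Omega_\alpha)$, and, through the analogue of Proposition~\ref{prop:nesting}, a nested containment chain of Newton polytopes indexed by $\preceq$. Granting Conjecture~\ref{conj:mar7bhh}, every lattice point in ${\sf Newton}(\Omega_\alpha)$ is an actual exponent of $\Omega_\alpha$, hence lies in some $\mathcal{L}_\gamma$ summand. To finish, one would need to localize this information: show that any lattice point $\beta \in {\sf Newton}(\mathcal{L}_\alpha)$ occurs in $\mathcal{L}_\alpha$ itself and not merely in some smaller $\mathcal{L}_\gamma$ with $\gamma\preceq\alpha$. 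A combinatorial model for the monomials of $\mathcal{L}_\alpha$ in the spirit of Mason's tableau rule for $A_\alpha$ \cite{Mason}, extended by ``multiset'' markings recording the higher-degree contributions of $-\partial_i\circ(x_ix_{i+1}\cdot\,)$, would make this step concrete.

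The main obstacle is precisely this final localization step. Because the summands $\mathcal{L}_\gamma$ in $\Omega_\alpha$ have overlapping Newton polytopes in the inhomogeneous regime, cancellations cannot a priori be ruled out, and a lattice point of ${\sf Newton}(\mathcal{L}_\alpha)$ might only appear as an exponent of some $\mathcal{L}_\gamma$ with $\gamma \precneqq \alpha$. Resolving this requires a sharp linear-inequality description of ${\sf Newton}(\mathcal{L}_\alpha)$ — a ``Lascoux atomtope'' parallel to the Schubitope of the introduction — which is currently unavailable even for $A_\alpha$ (Conjecture~\ref{conj:AisSNP}). Thus I expect Conjecture~\ref{conj:mar7yyy} to be accessible only in tandem with Conjecture~\ref{conj:AisSNP} and Conjecture~\ref{conj:mar7bhh}, and the real work lies in discovering the correct polytopal model.
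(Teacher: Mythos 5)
You should note at the outset that Conjecture~\ref{conj:mar7yyy} is exactly that: a conjecture which the paper does not prove. The authors only report computational verification for $|\alpha|\leq 7$ with at most three zero parts, so there is no ``paper's own proof'' to compare against. The question is whether your proposal closes the gap, and you yourself conclude correctly that it does not: your plan is conditional on Conjectures~\ref{conj:AisSNP} and~\ref{conj:mar7bhh}, and even granting both you flag the final ``localization'' step as unresolved. That self-assessment is accurate.

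Two points are worth making explicit beyond what you wrote. First, the step asserting ${\sf Newton}(\mathcal{L}_\alpha) \subseteq {\sf Newton}(\Omega_\alpha)$ from a hypothetical positive decomposition $\Omega_\alpha = \sum_\gamma c_{\alpha,\gamma}\mathcal{L}_\gamma$ quietly presumes no cancellation among the summands. Unlike the Demazure atoms $A_\gamma$, the Lascoux atoms are \emph{not} monomial-positive: already $\widehat{\tau}_1(x_1^a) = (x_1^{a-1}x_2 + \cdots + x_2^a) - (x_1^a x_2 + \cdots + x_1 x_2^a)$ has coefficients of both signs. What one expects, following the $K$-theoretic pattern, is that the sign of a degree-$d$ coefficient of $\mathcal{L}_\gamma$ is $(-1)^{d-|\gamma|}$; since every $\gamma$ in your decomposition has $|\gamma| = |\alpha|$, the signs would be uniform across summands and cancellation avoided --- but this needs to be stated and proved, and the positivity of the $c_{\alpha,\gamma}$ is itself not in the literature (the cited \cite[Lemma~3.5]{Pun} concerns the homogeneous $\kappa/A$ pair, and does not automatically transfer because $\tau_i$ and $\widehat\tau_i$ do not satisfy the same braid/idempotent relations as $\pi_i$ and $\widehat\pi_i$ verbatim). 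Second, your diagnosis that the missing ingredient is a polytopal model for ${\sf Newton}(\mathcal{L}_\alpha)$ is consistent with the paper's state of knowledge: no such description exists even for ${\sf Newton}(A_\alpha)$ (hence Conjecture~\ref{conj:AisSNP}), and for keys the paper offers only two \emph{conjectural} descriptions (Conjectures~\ref{conj:keytopeineq} and~\ref{conj:keyBruhatCondition}). So your proposal is a sensible roadmap, but it is a roadmap into the same open territory the paper itself has not crossed, and it does not constitute a proof.
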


${\mathcal L}_{\alpha}$ is an inhomogeneous analogue of $A_{\alpha}$.

Conjectures~\ref{conj:mar7bhh} and~\ref{conj:mar7yyy}
have been verified for $|\alpha|\leq 7$ where
$\alpha$ has at most three parts of size zero.

\subsection{Stanley polynomials and the stable limit of Conjecture~\ref{conj:main2}}

For $w\in S_{n}$, let $1^t\times w\in S_{t+n}$
be the permutation defined by $1^t\times w(i)=i$ for $1\leq i\leq t$ and
$1^t\times w(i)=n+i$ for $t+1\leq i\leq t+n$. The {\bf Stanley symmetric polynomial} (also known as the 
{\bf stable Schubert polynomial}) is defined by
\[F_w=\lim_{t\to \infty}{\mathfrak S}_{1^t\times w}\in {\sf Sym}.\]
This power series is well-defined. 

$F_w$ was originally introduced by R.~P.~Stanley in \cite{Stanley1984}. 
Every $w\in S_n$
can be expressed as a product of simple transpositions 
\[w=s_{i_1}s_{i_2}\cdots s_{i_{\ell}}.\] 
When $\ell=\ell(w)$ is the number
of inversions of $w$, this factorization is {\bf reduced}. Then  $s_{i_1}\cdots s_{i_{\ell}}$, or equivalently
$(i_1,\ldots,i_{\ell})$, is a {\bf reduced word} for $w$. Let $\#{\rm Red}(w)$ be the number of reduced words of $w$.
In \emph{loc.~cit.} it is shown that 
\[\#{\rm Red}(w)=[x_1\cdots x_{\ell(w)}]F_w.\]

The next result is a ``stable limit'' version of Conjecture~\ref{conj:main1}.

\begin{Theorem}
\label{thm:Stanley}
$F_{w}\in {\sf Sym}$ is SNP.
\end{Theorem}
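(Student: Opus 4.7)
The plan is to reduce to Proposition~\ref{prop:generalthing}(III), exactly as is done in several earlier SNP proofs in the paper (e.g., for $Z_G$ and for $F_M$ via Proposition~\ref{prop:characterargument}, and for Reutenauer's $q_\lambda$). Concretely, I will establish (a) that $F_w$ is Schur positive, and (b) that its Schur expansion
\[F_w=\sum_{\lambda}\alpha_{w,\lambda}\,s_\lambda\]
has a unique $\leq_D$-maximum $\lambda$. Once (a) and (b) are in hand, Proposition~\ref{prop:generalthing}(III) gives SNP immediately.

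For (a), I would invoke the theorem of Edelman--Greene (settling the Schur positivity conjecture of Stanley): via EG insertion, $\alpha_{w,\lambda}$ equals the number of EG tableaux of shape $\lambda$ whose reading words are reduced decompositions of $w$, and in particular $\alpha_{w,\lambda}\in\mathbb{Z}_{\geq 0}$. For (b), I would identify the dominance maximum as $\lambda(w):=\mathrm{sort}(c(w))$, where $c(w)=(c_1,c_2,\ldots)$ is the Lehmer code $c_i=\#\{j>i:w(j)<w(i)\}$. The key classical input is Macdonald's result that in the Schubert polynomial $\mathfrak{S}_w$ the monomial $x^{c(w)}$ appears with coefficient $1$ and is the dominance-leading exponent. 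Combining this with the stable limit $F_w=\lim_{t\to\infty}\mathfrak{S}_{1^t\times w}$ and $c(1^t\times w)=(0^t,c(w))$, together with the fact that $F_w\in{\sf Sym}$, shows that $m_{\lambda(w)}$ appears in $F_w$; one then uses either the Billey--Jockusch--Stanley formula (in the weakly-increasing compatible-sequence form that governs the $t\to\infty$ limit) or the EG description of (a) to see that every other monomial-symmetric term $m_\mu$ appearing in $F_w$ satisfies $\mu\leq_D\lambda(w)$. Since the Kostka matrix is $\leq_D$-unitriangular, the unique dominance-maximum in the monomial expansion coincides with the unique dominance-maximum in the Schur expansion, namely $s_{\lambda(w)}$.

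The main obstacle is step (b), and specifically the translation between composition-level dominance of exponent vectors of $\mathfrak{S}_{1^t\times w}$ and partition-level dominance of their sortings: in general $\alpha\leq_D\beta$ as compositions does \emph{not} imply $\lambda(\alpha)\leq_D\lambda(\beta)$ as partitions (for instance $(0,5)\leq_D(3,2)$ but $(5)\not\leq_D(3,2)$). The remedy is to work only with those compositions that arise as contents of weakly-increasing BJS compatible sequences (or equivalently, as weights of EG tableaux), where the run-length structure forces the sorted content to dominate correctly; it is here that the bookkeeping is delicate but ultimately routine. Granting this, Proposition~\ref{prop:generalthing}(III) finishes the proof of $F_w$'s SNP.
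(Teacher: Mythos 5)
Your overall strategy is the same as the paper's: both reduce to Proposition~\ref{prop:generalthing}(III), which requires Schur positivity plus a unique $\leq_D$-maximal term in the Schur expansion. The paper simply cites R.~P.~Stanley's Theorems~3.2 and~4.1 from \cite{Stanley1984} (recorded here as Theorem~\ref{thm:stan84}), which bundle exactly those two facts: ${\sf a}_{w,\lambda}\geq 0$ and ${\sf a}_{w,\lambda}\neq 0 \Rightarrow \lambda(w)\leq_D\lambda\leq_D\mu(w)$.

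Where your argument goes wrong is step~(b): you identify the $\leq_D$-maximum of the Schur support as $\lambda(w):=\operatorname{sort}(c(w))$, the sorted Lehmer code. In fact $\operatorname{sort}(c(w))$ is Stanley's \emph{lower} bound $\lambda(w)$, i.e.~the dominance \emph{minimum}; the dominance maximum is $\mu(w)=\bigl(\operatorname{sort}(c(w^{-1}))\bigr)'$. A concrete counterexample to your claim is $w=2143$: here $c(w)=(1,0,1,0)$ so $\operatorname{sort}(c(w))=(1,1)$, yet $F_{2143}=s_{(2)}+s_{(1,1)}$ (via EG insertion on the two reduced words $13$ and $31$), so $(2)$ appears and $(2)\not\leq_D(1,1)$. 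Thus the asserted inequality ``every $m_\mu$ appearing in $F_w$ satisfies $\mu\leq_D\lambda(w)$'' is false, and the Macdonald leading-term fact about $x^{c(w)}$ cannot deliver it --- that result concerns a lexicographic-type term order and, if anything, identifies the dominance \emph{floor} of the sorted exponents of $\mathfrak{S}_w$, not the ceiling. Repairing the argument would require switching to $\mu(w)=\lambda(w^{-1})'$ and then supplying a proof that all sorted exponents sit $\leq_D \mu(w)$; at that point you are essentially re-proving Stanley's Theorem~4.1, which the paper instead just invokes. Your handling of the transition from composition-level to partition-level dominance is flagged as ``delicate but routine,'' but this is precisely where the argument would fail as written, because the target partition is wrong.
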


Our proof rests on:

\begin{Theorem}[Theorems 3.2, 4.1, \cite{Stanley1984}] For 
\[F_w = \sum_\lambda {\sf a}_{w,\lambda} s_\lambda,\] 
${\sf a}_{w,\lambda} \geq 0$ and there exists $\lambda(w)$ and $\mu(w)$ such that if ${\sf a}_{w,\lambda} \neq 0$, then $\lambda(w) \leq_D \lambda \leq_D \mu(w)$.\label{thm:stan84}
\end{Theorem}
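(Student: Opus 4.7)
\medskip

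\noindent\textbf{Proof plan.} The statement comprises two separate assertions: the Schur nonnegativity of ${\sf a}_{w,\lambda}$, and the existence of dominance bounds $\lambda(w),\mu(w)$ on the support of the Schur expansion. I would treat each in turn, with the first being the deeper fact.

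For Schur positivity, the plan is to invoke the Edelman--Greene correspondence. Start from Stanley's own identity
\[ F_w=\sum_{a\in {\rm Red}(w)}F_{{\rm Des}(a)}, \]
where $F_{{\rm Des}(a)}$ is Gessel's fundamental quasisymmetric function (from Section~\ref{sec:quasi}) and ${\rm Des}(a)$ is the descent set of the reduced word $a=(a_1,\ldots,a_{\ell(w)})$. Apply Edelman--Greene insertion $a\mapsto (P(a),Q(a))$, where $P(a)$ is an increasing ``EG tableau'' for $w$ and $Q(a)$ is a standard Young tableau of the same shape $\lambda(a)$. The two properties to verify are (i) this is a bijection onto pairs $(P,Q)$ with $Q\in {\rm SYT}({\rm shape}(P))$ and $P$ an EG tableau for $w$, and (ii) ${\rm Des}(a)={\rm Des}(Q(a))$. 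Collating the quasisymmetric expansion by $P$ and using the standard identity $s_\lambda=\sum_{Q\in {\rm SYT}(\lambda)}F_{{\rm Des}(Q)}$ then gives
\[ F_w=\sum_{P}s_{{\rm shape}(P)}, \]
a manifestly Schur nonnegative expansion.

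For the dominance bounds, every $\lambda$ with ${\sf a}_{w,\lambda}\neq 0$ is a partition of $\ell(w)$, so the support is a finite nonempty subset of $({\sf Par}(\ell(w)),\leq_D)$. Since dominance is a lattice order on partitions of a fixed size, the meet and join of the support exist and may be chosen as $\lambda(w)$ and $\mu(w)$. To identify them explicitly, I would extract them from the EG tableaux associated to the lexicographically smallest and largest reduced words: one expects $\lambda(w)$ to be the sort of the Lehmer code of $w$ (the ``code partition'') and $\mu(w)$ to arise dually, matching the formulas given in Stanley's Theorem~4.1.

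The main obstacle is the descent-preservation property ${\rm Des}(a)={\rm Des}(Q(a))$ under EG insertion. This requires a careful inductive analysis of EG bumping, verifying that the cell of $Q(a)$ holding $i+1$ lies in a strictly lower (resp.\ weakly higher) row than the cell holding $i$ exactly when $a_i>a_{i+1}$ (resp.\ $a_i<a_{i+1}$), and confirming that the modified bumping rule triggered by repeated entries does not disturb this accounting. Once descent compatibility is in hand, the rest of the argument — Schur positivity and the extraction of $\lambda(w),\mu(w)$ — follows by routine bookkeeping.
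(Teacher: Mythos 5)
First, a point of reference: the paper does not prove this statement at all --- it is quoted from Stanley's 1984 paper (Theorems 3.2 and 4.1 there), with the nonnegativity ${\sf a}_{w,\lambda}\geq 0$ historically completed by the Edelman--Greene correspondence. So there is no internal proof to compare against, and what you have written is an outline of the standard literature proof. For the positivity half your route is the right one and the outline is sound: the identity $F_w=\sum_{a\in{\rm Red}(w)}F_{{\rm Des}(a)}$, Edelman--Greene insertion, descent preservation ${\rm Des}(a)={\rm Des}(Q(a))$, and Gessel's $s_\lambda=\sum_{Q\in{\rm SYT}(\lambda)}F_{{\rm Des}(Q)}$ do combine to give $F_w=\sum_{P}s_{{\rm shape}(P)}$. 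You correctly isolate descent preservation as the crux; that step is the entire content of Edelman--Greene's theorem and is a genuinely delicate induction on the modified bumping, so as written this half is a plan rather than a proof, but the plan is correct.

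The second half has a real gap. Your observation that the support is a finite nonempty subset of the lattice $({\sf Par}(\ell(w)),\leq_D)$, so that its meet and join exist, establishes only a vacuous form of the assertion: one could just as well take $\lambda(w)=(1^{\ell(w)})$ and $\mu(w)=(\ell(w))$, the global minimum and maximum of the poset, and "bounds exist" is true of any nonzero homogeneous symmetric function. The way Theorem~\ref{thm:stan84} is actually used --- combined with Proposition~\ref{prop:generalthing}(III) to prove Theorem~\ref{thm:Stanley} --- requires that $\mu(w)$ lie \emph{in the support}, i.e.\ ${\sf a}_{w,\mu(w)}\neq 0$, so that the Schur expansion has a genuine $\leq_D$-maximum. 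The join of a subset of a lattice need not belong to that subset, so your argument does not deliver this. The substantive content of Stanley's Theorem 4.1 is precisely the explicit identification of $\lambda(w)$ as the decreasing rearrangement of the Lehmer code of $w$ and of $\mu(w)$ as $\lambda(w^{-1})'$, together with proofs that ${\sf a}_{w,\lambda(w)}={\sf a}_{w,\mu(w)}=1$ and that every shape in the support is sandwiched between them; you defer all of this to "one expects." To close the gap you would need either an analysis of which shapes can arise as Edelman--Greene insertion tableaux of reduced words of $w$, or Stanley's original argument (decomposing reduced words by maximal decreasing runs to get the upper bound on $\lambda'$, and using the relation between $F_w$ and $F_{w^{-1}}$ under $\omega$ to get the dual bound).
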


\noindent
\emph{Proof of Theorem~\ref{thm:Stanley}:} Combine Theorem~\ref{thm:stan84} and Proposition~\ref{prop:generalthing}(III).\qed

\begin{Corollary}
\label{cor:skewSNP}
Any skew-Schur polynomial $s_{\lambda/\mu}(X)$ has SNP.
\end{Corollary}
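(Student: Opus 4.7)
The plan is to realize each skew Schur polynomial as a Stanley symmetric polynomial and then invoke Theorem~\ref{thm:Stanley}. A classical result---implicit in \cite{Stanley1984} and made explicit by Billey-Jockusch-Stanley---asserts that for every skew shape $\lambda/\mu$ there exists a $321$-avoiding (equivalently, fully commutative) permutation $w \in S_\infty$ such that $F_w = s_{\lambda/\mu}$. Granting this identification, $s_{\lambda/\mu} \in {\sf Sym}$ is SNP by Theorem~\ref{thm:Stanley}.

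If a proof staying entirely within the paper's framework is preferred, one may apply Proposition~\ref{prop:generalthing}(III) directly. By the Littlewood-Richardson rule,
\[ s_{\lambda/\mu} = \sum_\nu {\sf LR}^{\lambda}_{\mu,\nu}\, s_\nu \]
is Schur-positive; it therefore suffices to exhibit a unique $\leq_D$-maximal $\nu^{\ast}$ among $\{\nu : {\sf LR}^{\lambda}_{\mu,\nu} \neq 0\}$. I would take $\nu^\ast$ to be the partition whose conjugate is the decreasing rearrangement of the column-length vector $(\lambda'_c - \mu'_c)_{c\geq 1}$ of $\lambda/\mu$. To see ${\sf LR}^{\lambda}_{\mu,\nu^\ast} \neq 0$, exhibit the explicit filling $T(r,c) = r - \mu'_c$, i.e.\ fill each column of $\lambda/\mu$ with $1,2,3,\ldots$ from top to bottom; its content is $\nu^\ast$, and verifying this is an LR tableau reduces to checking its reverse reading word is a lattice word. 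For the dominance bound, use the elementary observation that in any SSYT of shape $\lambda/\mu$ the number of entries $\leq k$ appearing in column $c$ is at most $\min(k,\lambda'_c - \mu'_c)$ (since the entries of column $c$ are strictly increasing from top to bottom), so the total count is at most $\sum_c \min(k,\lambda'_c - \mu'_c) = \nu^\ast_1 + \cdots + \nu^\ast_k$; hence $\nu \leq_D \nu^\ast$ whenever ${\sf LR}^{\lambda}_{\mu,\nu} \neq 0$.

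I would adopt the first route, which is essentially a one-line reduction. Its only obstacle is citing the identification $F_w = s_{\lambda/\mu}$, which is standard. In the alternative self-contained route, the only technical step is verifying the Yamanouchi/lattice-word condition for the column-wise filling; all the remaining content is immediate from the paper's Proposition~\ref{prop:generalthing}(III) together with the column-capacity bound above.
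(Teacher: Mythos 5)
Your first route, which you adopt, is exactly the paper's proof: cite \cite{BJS} to write $s_{\lambda/\mu} = F_{w_{\lambda/\mu}}$ for a $321$-avoiding permutation $w_{\lambda/\mu}$ and apply Theorem~\ref{thm:Stanley}. Your alternative second route is also correct and is worth recording: it is self-contained modulo the Littlewood--Richardson rule, avoids citing \cite{BJS}, and as a bonus identifies ${\sf Newton}(s_{\lambda/\mu}(x_1,\ldots,x_n)) = {\mathcal P}_{\nu^*}$ explicitly via Proposition~\ref{prop:generalthing}(I). The details you flag do check out: the superstandard column filling $T(r,c)=r-\mu'_c$ is semistandard, has content $\nu^*$ (the conjugate of the sorted column-length vector), and its reverse reading word is a lattice word because each entry $k+1$ in column $c$ pairs injectively with the $k$ directly above it in the same column, which appears in a strictly earlier row and hence earlier in the word. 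The column-capacity estimate $\sum_c \min(k,\lambda'_c-\mu'_c)=\nu^*_1+\cdots+\nu^*_k$ then gives $\nu\leq_D \nu^*$ for every Schur constituent $\nu$, so Proposition~\ref{prop:generalthing}(III) applies.
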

\begin{proof}
To every skew shape $\lambda/\mu$ there is a $321$-avoiding permutation $w_{\lambda/\mu}$ with the property that
$F_{w_{\lambda/\mu}}(X)=s_{\lambda/\mu}$ \cite{BJS}. 
Now apply Theorem~\ref{thm:Stanley}.
\end{proof}

Let 
\[S_{\infty,\ell}=\{w\in S_{\infty}:\ell(w)=\ell\}.\]
Declare 
\[u\preceq_{D} v \text{\ for $u,v\in S_{\infty,\ell}$
\ if ${\sf Newton}({\mathfrak S}_u)\subseteq {\sf Newton}({\mathfrak S}_v)$.}\]

Given a partition $\lambda=(\lambda_1\geq \lambda_2\geq \ldots\geq \lambda_k>0)$, define $w_{\lambda,k}\in S_{\lambda_1+k}$ 
to be the unique permutation that satisfies \[w_{\lambda,k}(i)=\lambda_{k-i+1}+i \mbox{\ for $1\leq i\leq k$}\] 
and is {\bf Grassmannian}, i.e., it has at most one descent, at position $k$. Then one has 
\[{\mathfrak S}_{w_{\lambda,k}}=s_{\lambda}(x_1,\ldots,x_k).\]
We now show that $(S_{\infty},\preceq_D)$
 extends $({\sf Par}(n),\leq_D)$:

\begin{Proposition}
Suppose $\lambda,\mu\in {\sf Par}(n)$ and let $k=\max\{\ell(\lambda),\ell(\mu)\}$. 
Then $\lambda\leq_D\mu$ if and only if
$w_{\lambda,k}\preceq_D w_{\mu,k}$.
\end{Proposition}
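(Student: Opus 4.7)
The plan is to reduce the equivalence to a direct application of Rado's theorem, using the standard fact that the Schubert polynomial of a Grassmannian permutation is a Schur polynomial.

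First I would verify that both permutations lie in the same strata $S_{\infty,\ell}$, so that the relation $\preceq_D$ is actually defined on the pair $(w_{\lambda,k}, w_{\mu,k})$. Since $w_{\lambda,k}$ is Grassmannian with descent at $k$ and values $w_{\lambda,k}(i) = \lambda_{k-i+1}+i$ for $1 \leq i \leq k$, a routine inversion count yields $\ell(w_{\lambda,k}) = |\lambda| = n$. The same argument gives $\ell(w_{\mu,k}) = |\mu| = n$, so both permutations lie in $S_{\infty,n}$.

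Next I would invoke the stated identity ${\mathfrak S}_{w_{\lambda,k}} = s_{\lambda}(x_1,\ldots,x_k)$ (and similarly for $\mu$). Since $k = \max\{\ell(\lambda),\ell(\mu)\}$, we have $k \geq \ell(\lambda)$ and $k \geq \ell(\mu)$, so neither specialized Schur polynomial vanishes. Because $s_{\lambda}(x_1,\ldots,x_k) = \sum_{\nu \leq_D \lambda} {\sf K}_{\lambda,\nu}\, m_{\nu}(x_1,\ldots,x_k)$ has a unique $\leq_D$-maximal Schur term (itself), Proposition~\ref{prop:generalthing}(I) applies and gives
\[{\sf Newton}({\mathfrak S}_{w_{\lambda,k}}) = {\mathcal P}_{\lambda} \subset {\mathbb R}^k \quad \text{and} \quad {\sf Newton}({\mathfrak S}_{w_{\mu,k}}) = {\mathcal P}_{\mu} \subset {\mathbb R}^k.\]
Since both Schubert polynomials involve only the variables $x_1,\ldots,x_k$, it is natural to regard both Newton polytopes as subsets of the same ambient $\mathbb{R}^k$, so the containment question is unambiguous.

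Finally, I would apply Rado's theorem in the form (\ref{eqn:containDominance}). Chaining the equivalences gives
\[w_{\lambda,k} \preceq_D w_{\mu,k} \iff {\mathcal P}_{\lambda} \subseteq {\mathcal P}_{\mu} \iff \lambda \leq_D \mu,\]
which is the claimed statement. There is no real obstacle: once the Grassmannian-to-Schur identification is in hand, everything is assembled from results earlier in the paper (Proposition~\ref{prop:generalthing}(I) and Rado's theorem). The only point requiring a moment's care is the hypothesis $k \geq \max\{\ell(\lambda),\ell(\mu)\}$, which is precisely what ensures both Schur polynomials are nonzero and that the two permutahedra sit naturally in a common ambient space.
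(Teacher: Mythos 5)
Your proof is correct and follows essentially the same route as the paper: identify $\mathfrak{S}_{w_{\lambda,k}}$ with $s_{\lambda}(x_1,\ldots,x_k)$, use that the Newton polytope of the Schur polynomial is $\mathcal{P}_{\lambda}$, and finish with Rado's theorem. Your additional remarks (verifying both permutations have length $n$ and that the choice of $k$ keeps both Schur polynomials nonzero) are sensible sanity checks that the paper leaves implicit.
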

\begin{proof}

Since ${\mathfrak S}_{w_{\lambda,k}}(x_1,\ldots,x_k)=s_{\lambda}(x_1,\ldots,x_k)$
and ${\mathfrak S}_{w_{\mu,k}}(x_1,\ldots,x_k)=s_{\mu}(x_1,\ldots,x_k)$, 
\[{\mathcal P}_{\lambda}={\sf Newton}(s_{\lambda}(x_1,\ldots,x_k))
={\sf Newton}({\mathfrak S}_{w,\lambda}(x_1,\ldots,x_k)) \subseteq \mathbb{R}^k.\]
The same statement holds where we replace $\lambda$ by $\mu$.
Now apply Rado's theorem (\ref{eqn:containDominance}).
\end{proof}

Figure~\ref{fig:poset} shows part of $(S_{\infty,2},\preceq_D)$. From this one can see 
that the poset is not graded, just like dominance order $\leq_D$ on partitions.  
Unlike $\leq_D$, it is not a lattice: in Figure~\ref{fig:poset}, the elements $231456$ 
and $312456$ do not have a unique least upper bound as $142356$ and $214356$ are 
incomparable minimal upper bounds.

\begin{Theorem}
Every two elements $u,v\in S_{\infty,\ell}$ have an upper bound under $\preceq_D$.
\end{Theorem}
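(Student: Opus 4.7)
The plan is to exhibit an explicit common upper bound, namely a Grassmannian permutation whose Schubert polynomial is the complete homogeneous symmetric polynomial $h_\ell$, and observe that its Newton polytope is the full simplex of nonnegative lattice points summing to $\ell$.

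First I would fix $u,v\in S_{\infty,\ell}$ and choose $k$ large enough that ${\mathfrak S}_u$ and ${\mathfrak S}_v$ involve only $x_1,\ldots,x_k$; this is possible because for any $w\in S_n$ the polynomial ${\mathfrak S}_w$ is supported in $x_1,\ldots,x_{n-1}$. Next I would take $w = w_{(\ell),k}$, the Grassmannian permutation associated to the single-row partition $\lambda = (\ell)$ with descent at position $k$. From the formula $w_{\lambda,k}(i) = \lambda_{k-i+1}+i$ one checks directly that $w$ has exactly $\ell$ inversions, so $w\in S_{\infty,\ell}$, and using the identification ${\mathfrak S}_{w_{\lambda,k}} = s_\lambda(x_1,\ldots,x_k)$ recalled just before the proposition, one gets
\[
{\mathfrak S}_{w} \;=\; s_{(\ell)}(x_1,\ldots,x_k) \;=\; h_\ell(x_1,\ldots,x_k).
\]

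The next step is to identify the Newton polytope of this Schubert polynomial. Every monomial $x^\alpha$ with $\alpha\in\mathbb{Z}_{\geq 0}^k$ and $|\alpha|=\ell$ appears in $h_\ell(x_1,\ldots,x_k)$, so
\[
{\sf Newton}({\mathfrak S}_{w}) \;=\; \bigl\{\alpha\in \mathbb{R}_{\geq 0}^k : \alpha_1+\cdots+\alpha_k = \ell\bigr\}.
\]
Finally, since Schubert polynomials are homogeneous of degree equal to the length of the permutation, every exponent vector of ${\mathfrak S}_u$ or ${\mathfrak S}_v$ lies in $\mathbb{Z}_{\geq 0}^k$ and sums to $\ell$. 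Hence ${\sf Newton}({\mathfrak S}_u)$ and ${\sf Newton}({\mathfrak S}_v)$ are both contained in ${\sf Newton}({\mathfrak S}_{w})$, which gives $u\preceq_D w$ and $v\preceq_D w$ as required.

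There isn't really a hard step here; the only subtlety is making sure the Grassmannian permutation $w_{(\ell),k}$ is allowed to vary with $u,v$ (it may need arbitrarily many ambient letters as $k$ grows), but this is harmless since $\preceq_D$ is defined on all of $S_{\infty,\ell}$ and Schubert polynomials are stable under the inclusions $S_n\hookrightarrow S_{n+1}$.
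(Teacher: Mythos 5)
Your proof is correct, and it takes a genuinely different and more economical route than the paper. The paper constructs an upper bound indirectly: it starts from the permutation $w=s_1s_3\cdots s_{2\ell-1}$, invokes the Fomin--Greene tableau formula for the Schur expansion of the Stanley symmetric polynomial $F_w$ to show every $s_\mu$ with $\mu\vdash\ell$ appears (with the consequence that every exponent vector of ${\mathfrak S}_u$ and ${\mathfrak S}_v$ occurs as a monomial of $F_w$), and then passes to ${\mathfrak S}_{1^N\times w}$ for $N$ sufficiently large. Your argument short-circuits all of this: you exhibit the explicit Grassmannian permutation $w_{(\ell),k}$ with ${\mathfrak S}_{w_{(\ell),k}}=h_\ell(x_1,\ldots,x_k)$, whose Newton polytope is the \emph{entire} simplex $\{\alpha\in\mathbb{R}_{\geq 0}^k:\sum\alpha_i=\ell\}$, and then use only the trivial facts that ${\mathfrak S}_u,{\mathfrak S}_v$ are homogeneous of degree $\ell$ with nonnegative integer exponents in finitely many variables. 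No Stanley symmetric polynomials, no stable limits, no tableau combinatorics. In both approaches the auxiliary permutation must grow with $u,v$ (your $k$, their $N$), and in both cases the resulting Schubert polynomial is, after restriction, all of $h_\ell$; so the underlying object is morally the same, but your packaging is cleaner and self-contained given the $w_{\lambda,k}$ setup the paper has already recorded.

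One minor presentational remark: it is worth saying in one line why $\ell(w_{(\ell),k})=\ell$ (the single value $k+\ell$ in position $k$ sits above the $\ell$ smaller values $k,k+1,\ldots,k+\ell-1$ to its right, and there are no other inversions), since membership in $S_{\infty,\ell}$ is needed for $w_{(\ell),k}$ to be a legitimate comparand under $\preceq_D$. You assert this check "one checks directly," which is fine, but the verification is short enough to include.
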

\begin{proof}
Suppose $\{\alpha_i\}$ and $\{\beta_j\}$ are the exponent vectors of ${\mathfrak S}_u$ and
${\mathfrak S}_v$, respectively. It suffices to show
there exists $w\in S_{\infty,\ell}$ such that
\[{\mathfrak S}_w=\sum_i x^{\alpha_i}+\sum_{j} x^{\beta_j}+\text{(positive sum of monomials)}.\]

We first show that there is a $F_w$ such that each
$s_{\lambda(\alpha_i)}$ and $s_{\lambda(\beta_j)}$ appear
(possibly with multiplicity). A theorem of S.~Fomin-C.~Greene \cite{Fomin.Greene} states that
\[F_w=\sum_{\nu} {\sf a}_{w,\nu}s_{\nu}\]
where ${\sf a}_{w,\nu}$ is the number of semistandard tableaux of shape $\nu$ such that the top-down, right-to-left
reading word is a reduced word for $w$. Let 
\[w=s_1 s_{3} s_{5}\cdots s_{2\ell-1}.\]
Clearly this is a reduced word. All reduced words of $w$
are obtained by permuting the simple transpositions.

Filling \emph{any} shape of size $\ell$ by successively placing 
$1,3,5,\ldots,2\ell-1$ along rows in left to right order gives a semistandard tableaux. Thus every $s_{\mu}$ where $\mu\vdash \ell$
appears in $F_w$. In particular each $s_{\lambda(\alpha_i)}$ and each $s_{\lambda(\beta_j)}$ appears. Since 
$x^{\lambda(\alpha_i)}$ appears in $s_{\lambda(\alpha_i)}$, by symmetry of $s_{\lambda(\alpha_i)}$, $x^{\alpha_i}$ appears
as well. That is, $x^{\alpha_i}$ appears in $F_w$. Similarly $x^{\beta_j}$ appears in $F_w$.

By definition, for any monomial $x^\gamma$ appearing in $F_w$, there is a finite $N_\gamma$ such that $x^\gamma$ appears in ${\mathfrak S}_{1^{N_\gamma}\times w}$. It suffices to pick $N$ larger than all $N_{\alpha_i}$ and $N_{\beta_j}$.
\end{proof}

\begin{figure}
\begin{picture}(300,350)
\put(0,0){\includegraphics[height=4.5in]{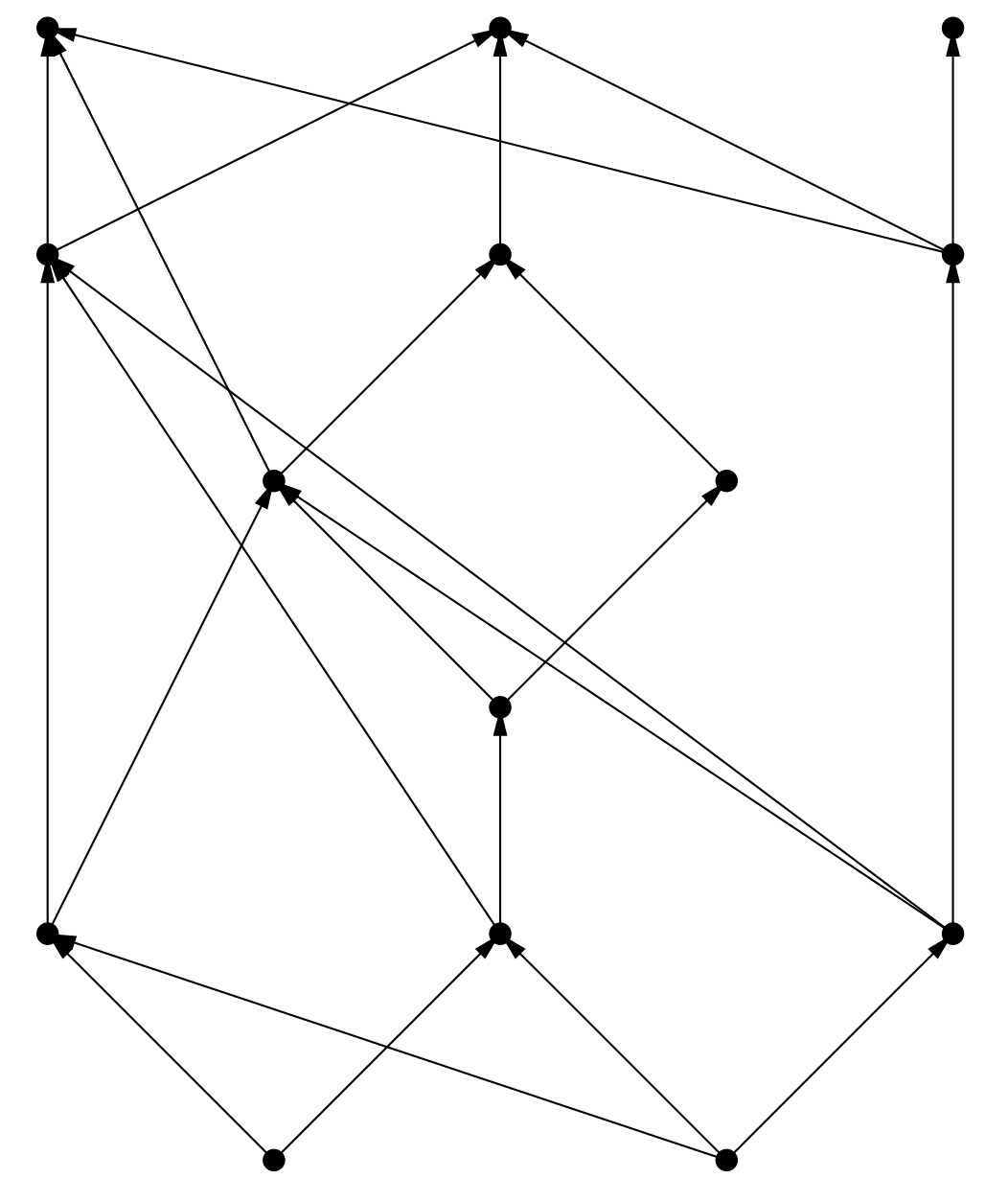}}
\put(33,7){$312456$}
\put(202,7){$231456$}
\put(-28,67){$142356$}
\put(93,67){$214356$}
\put(262,67){$134256$}
\put(93,130){$213546$}
\put(30,190){$132546$}
\put(202,190){$213465$}
\put(-28,252){$125346$}
\put(92,252){$132465$}
\put(262,252){$124536$}
\put(-28,314){$123645$}
\put(92,314){$124365$}
\put(218,314){$123564$}
\end{picture}

\caption{The $S_6$ part of the Hasse diagram of $(S_{\infty,2},\preceq_D)$}  
\label{fig:poset}
\end{figure}

\subsection{Conjectural inequalities for the ${\sf Newton}({\mathfrak S}_w)$}
Let 
\[{\sf D}_w=\{(i, j) : 1 \leq i, j \leq n, w(i) > j \text{\ and \ } w^{-1}(j) > i\}\] 
be the {\bf Rothe diagram} of a permutation $w\in S_n$. 

\begin{Conjecture}
\label{conj:main1}
${\mathcal S}_{{\sf D}_{w}}={\sf Newton}({\mathfrak S}_w)$.
\end{Conjecture}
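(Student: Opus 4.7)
The plan is to establish ${\mathcal S}_{{\sf D}_w}={\sf Newton}(\mathfrak{S}_w)$ as a polytope equality by proving each inclusion. Homogeneity of $\mathfrak{S}_w$ already forces any exponent vector to lie on the hyperplane $\sum_i \alpha_i = \ell(w) = \#{\sf D}_w$, so attention focuses on the nonnegativity and the inequalities $\sum_{i \in S} \alpha_i \leq \theta_{{\sf D}_w}(S)$.

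For ${\sf Newton}(\mathfrak{S}_w) \subseteq {\mathcal S}_{{\sf D}_w}$, the most natural tool is Kohnert's combinatorial rule (alluded to in Remark~\ref{remark:diffwithkohnert}): every monomial of $\mathfrak{S}_w$ arises as the row-weight of some diagram obtained from ${\sf D}_w$ by the iterative move that shifts the rightmost box of some row upward within its column into the first empty cell above. Fix such a Kohnert diagram ${\sf K}$, a subset $S \subseteq [n]$, and a column $c$. Since Kohnert moves act column-wise and only move boxes upward, the column-$c$ boxes of ${\sf K}$ form an ``upward push'' of those of ${\sf D}_w$. I would prove a purely combinatorial lemma: for any upward push of a single column, the number of boxes in rows of $S$ is bounded by $\theta_{{\sf D}_w}^c(S)$. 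Reading top to bottom, the stars are boxes pinned in $S$-rows contributing $+1$ each; the matched parentheses record a maximum non-crossing assignment of original non-$S$ boxes to available $S$-slots strictly above them, which is exactly the answer to the optimization ``how many boxes in this column can be pushed into $S$-rows?'' Summing over $c$ yields the $S$-inequality for ${\mathcal S}_{{\sf D}_w}$.

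For the reverse inclusion ${\mathcal S}_{{\sf D}_w} \subseteq {\sf Newton}(\mathfrak{S}_w)$, convexity reduces the problem to showing every vertex $v$ of the Schubitope is itself an exponent vector of $\mathfrak{S}_w$. Since Proposition~\ref{thm:first} asserts ${\mathcal S}_{{\sf D}_w}$ is a generalized permutahedron, its vertices are enumerated by the greedy algorithm on $\theta_{{\sf D}_w}$: for each $\sigma \in S_n$, one obtains a vertex $v^\sigma$ by setting $v^\sigma_{\sigma(1)}$ as large as possible, then $v^\sigma_{\sigma(2)}$ as large as possible given $v^\sigma_{\sigma(1)}$, and so on. I would then construct, for each $v^\sigma$, an explicit Kohnert-reachable diagram realizing this row-weight: process columns independently, and in column $c$ assign each box to the highest $S$-row available as $S$ sweeps through the prefixes $\{\sigma(1)\},\{\sigma(1),\sigma(2)\},\ldots$. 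The parenthesis-matching bound is achieved exactly by such greedy column fillings, so $v^\sigma$ becomes the row-weight of the resulting diagram. Demonstrating that this diagram is truly reachable from ${\sf D}_w$ by a \emph{single} valid Kohnert sequence, rather than only column-wise feasible, is the delicate step.

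The main obstacle is indeed this last point in the reverse direction: Kohnert moves are globally entangled because the ``rightmost box'' rule introduces column-to-column dependencies, so a placement that is locally push-up-feasible in each column need not be globally reachable in a single legal run. A complementary line of attack would be to decompose ${\mathcal S}_{{\sf D}_w}$ as a Minkowski sum of matroid polytopes indexed by the columns of ${\sf D}_w$, making the vertex set entirely transparent, and then match those vertices with monomials produced by the Billey--Jockusch--Stanley pipe-dream formula via the natural bijection between pipe dreams in each column and independent sets of the corresponding matroid. Either route must ultimately explain why the submodular function $S \mapsto \theta_{{\sf D}_w}(S)$ coincides with the rank-type function implicit in the Schubert polynomial, and this is the substantive combinatorial content of the conjecture.
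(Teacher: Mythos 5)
The statement you were asked to prove is labeled Conjecture~\ref{conj:main1} in the paper, and the paper does \emph{not} prove it --- it reports computational verification for all $w \in S_8$ and leaves the general case open. What the paper does establish is exactly the inclusion you handle first: Proposition~\ref{thm:onecontainment} shows ${\mathcal S}_{{\sf D}_w}\supseteq {\sf Newton}({\mathfrak K}_w)$, which combined with the Winkel/Assaf proof of Kohnert's rule gives ${\sf Newton}({\mathfrak S}_w) \subseteq {\mathcal S}_{{\sf D}_w}$. Your argument for this inclusion (the column-by-column ``upward push'' bound via parenthesis matching) is the same as the paper's; be aware the paper leaves the identity $\theta^c_{{\sf D}_w}(S) = T_{{\sf D}_w, S, c} + U_{{\sf D}_w, S, c}$ as ``easy to check,'' so your paraphrase of the paired parentheses as a maximum non-crossing assignment, while correct in spirit, would still need to be written out.

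For the reverse containment your sketch has two gaps, one you acknowledge and one you do not. The acknowledged one is decisive: showing that a column-wise feasible target diagram is actually reachable by a single legal Kohnert run is precisely the obstruction that makes this a conjecture rather than a theorem, and Remark~\ref{remark:diffwithkohnert} explicitly flags that Kohnert moves are \emph{not} column-independent while the $\theta^c_{\sf D}$ computation is. The unacknowledged gap is your appeal to ``Proposition~\ref{thm:first} asserts ${\mathcal S}_{{\sf D}_w}$ is a generalized permutahedron'' in order to run greedy vertex enumeration. That proposition only proves ${\mathcal S}_{{\sf D}_\lambda}={\mathcal P}_\lambda$ for the diagram of a partition; it says nothing about general Rothe diagrams, and greedy vertex enumeration requires $S\mapsto\theta_{{\sf D}_w}(S)$ to be submodular, a property the paper neither proves nor claims. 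Without that, you have no description of the vertex set of ${\mathcal S}_{{\sf D}_w}$ to match against monomials. So the reverse inclusion is genuinely open in your writeup, consistent with the paper's treatment of the statement as a conjecture rather than a theorem.
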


This has been checked for all $w \in S_8$, as well as many larger instances.
Notice that Conjecture~\ref{conj:main1} is equivalent to the assertion that
$w\preceq_D v$ if and only if $\theta_{{\sf D}_w}(S)\leq \theta_{{\sf D}_v}(S)$
for all $S\subseteq [n]$. 

\begin{Example}
\label{example:21543}
Suppose $w=21543$, the Rothe diagram ${\sf D}_w$ is given by
\begin{center}
\begin{tikzpicture}[scale=0.5]
\draw (0,0) rectangle (5,5);

\draw[fill=gray!30] (0,4) rectangle (1,5);
\draw[fill=gray!30] (2,2) rectangle (3,3);
\draw[fill=gray!30] (3,2) rectangle (4,3);
\draw[fill=gray!30] (2,1) rectangle (3,2);

\filldraw (1.5,4.5) circle (.5ex);
\draw[line width = .2ex] (1.5,0) -- (1.5,4.5) -- (5,4.5);
\filldraw (0.5,3.5) circle (.5ex);
\draw[line width = .2ex] (0.5,0) -- (0.5,3.5) -- (5,3.5);
\filldraw (4.5,2.5) circle (.5ex);
\draw[line width = .2ex] (4.5,0) -- (4.5,2.5) -- (5,2.5);
\filldraw (3.5,1.5) circle (.5ex);
\draw[line width = .2ex] (3.5,0) -- (3.5,1.5) -- (5,1.5);
\filldraw (2.5,0.5) circle (.5ex);
\draw[line width = .2ex] (2.5,0) -- (2.5,0.5) -- (5,0.5);
\end{tikzpicture}
\end{center}
One can check that the defining inequalities are 
\[\alpha_1+\alpha_2+\alpha_3+\alpha_4=4\]
\[\alpha_1\leq 3,\ \alpha_2\leq2,\ \alpha_3\leq 2,\ \alpha_4\leq 1\]
\[\alpha_1+\alpha_2\leq 4,\  \alpha_1+\alpha_3\leq 4,\  \alpha_1+\alpha_4\leq 4,\  \alpha_2+\alpha_3\leq 3,\  \alpha_{2}+\alpha_4\leq 3,
\alpha_3+\alpha_4\leq 3\]
\[\alpha_1+\alpha_2+\alpha_3\leq 4, \  \alpha_1+\alpha_2+\alpha_4\leq 4, \ \alpha_2+\alpha_3+\alpha_4\leq 3\]
\[\alpha_1+\alpha_2+\alpha_3+\alpha_4\leq 4.\]
together with $\alpha_i \geq 0$ for each $i$. The polytope is depicted in Section~1.\qed
\end{Example}

One can uniquely reconstruct $u\in S_{\infty}$ with the defining inequalities.
\begin{Proposition}
If $u,v\in S_{n}$ are of the same length and 
$\theta_{{\sf D}_{u}}(S)=\theta_{{\sf D}_v}(S)$ 
for all $S=\{i,i+1,\ldots,n\}$
where $1\leq i\leq n$, then $u=v$.
\end{Proposition}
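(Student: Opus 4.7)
The plan is to unpack $\theta_{{\sf D}_w}(S)$ for the specific intervals $S = \{i, i+1, \ldots, n\}$ and observe that the resulting data is equivalent to the row sums of the Rothe diagram, which form the Lehmer code of $w$ and hence determine $w$ uniquely.

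First, fix a column $c$ and compute ${\tt word}_{c,S}({\sf D})$ for $S = \{i, i+1, \ldots, n\}$. Rows $r < i$ lie outside $S$, so each such row contributes a $)$ to the word if $(r,c) \in {\sf D}$ and nothing otherwise. Rows $r \geq i$ lie in $S$, so each contributes a $\star$ if $(r,c) \in {\sf D}$ and a $($ if $(r,c) \notin {\sf D}$. Reading top to bottom, the word therefore consists of a (possibly empty) prefix of $)$'s followed by a suffix consisting solely of $($'s and $\star$'s. In particular every $)$ precedes every $($, so the number of paired $(\ )$'s is zero. Consequently
\[
\theta_{{\sf D}}^c(\{i,\ldots,n\}) = \#\{r \geq i : (r,c) \in {\sf D}\},
\]
and summing over $c \in [n]$ gives
\[
\theta_{{\sf D}}(\{i,\ldots,n\}) = \#\{(r,c) \in {\sf D} : r \geq i\}.
\]

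Next, apply this formula to ${\sf D}_u$ and ${\sf D}_v$. The hypothesis says these quantities agree for every $i$, so by taking successive differences we get
\[
\#\{c : (i,c) \in {\sf D}_u\} = \#\{c : (i,c) \in {\sf D}_v\} \qquad \text{for every } i \in [n].
\]
That is, the two Rothe diagrams have identical row sums. But the row sums of ${\sf D}_w$ are precisely the entries of the Lehmer code $L(w) = (L_1,\ldots,L_n)$ with $L_i = \#\{j > i : w(j) < w(i)\}$, and the Lehmer code is well known to be a bijection between $S_n$ and $\{(L_1,\ldots,L_n) : 0 \leq L_i \leq n-i\}$. Hence $L(u) = L(v)$ forces $u = v$.

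No step is a genuine obstacle: the length hypothesis $\ell(u)=\ell(v)$ is even redundant (it is the $i=1$ instance of the assumption, since $\theta_{{\sf D}_w}([n]) = \#{\sf D}_w = \ell(w)$). The only point requiring care is the parenthesis-matching observation above, which reduces the Schubitope inequalities, for this restricted family of subsets $S$, to the elementary statement that the row sums of the Rothe diagram determine the permutation.
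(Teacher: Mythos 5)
Your proof is correct and takes essentially the same route as the paper's: both reduce the hypothesis to the equality of Lehmer codes by observing that for a suffix interval $S=\{i,\ldots,n\}$, the word ${\tt word}_{c,S}({\sf D}_w)$ has all $)$'s before all $($'s, so no pairs form and $\theta_{{\sf D}_w}(S)$ simply counts boxes of ${\sf D}_w$ in rows $\geq i$. The paper phrases the deduction as $\sum_{j=1}^i c_j(w)=\ell-\theta_{{\sf D}_w}(\{i+1,\ldots,n\})$ while you take first differences, and your remark that the length hypothesis is the $i=1$ case of the assumption (since $\theta_{{\sf D}_w}([n])=\#{\sf D}_w=\ell(w)$) is a small, correct refinement the paper leaves implicit.
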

\begin{proof}
Let 
\[c_i(\pi)=\#\{j:(i,j)\in {\sf D}_{\pi}\}.\] 
Thus $(c_1(\pi),c_2(\pi),\ldots)$ is the {\bf Lehmer code}
of $\pi$. The Lehmer code uniquely 
determines $\pi\in S_{\infty}$; see, e.g., \cite[Proposition~2.1.2]{Manivel}. Hence it suffices to show the
codes of $u$ and $v$ are the same. This follows from:
\[\sum_{j=1}^i c_j(u)=\ell-\theta_{{\sf D}_u}(\{i+1,i+2,\ldots,n\})=\ell-\theta_{{\sf D}_v}(\{i+1,i+2,\ldots,n\})=\sum_{j=1}^i c_j(v),\]
for $i=1,2,\ldots n-1$.
\end{proof}

The inequalities of ${\mathcal S}_{\sf D}$ are in general redundant. If 
\begin{equation}
\label{eqn:redux1}
\theta_{\sf D}(S)=\theta_{\sf D}(T) \text{\ and $S\supseteq T$}
\end{equation}
then the inequality 
\[\sum_{i\in T}\alpha_i\leq \theta_{\sf D}(T)\] 
is unnecessary. Similarly, if 
\begin{equation}
\label{eqn:redux2}
S= \sqcup_i T_i \text{\ \ and \ $\theta_{\sf D}(S)=\sum_i \theta_{\sf D}(T_i)$}
\end{equation} 
then the $S$-inequality is implied by the $T_i$ inequalities. 

\begin{Problem}
Give the minimal set of inequalities associated to ${\sf D}_w$ (or more generally, any ${\sf D}$).
\end{Problem}

\begin{Example}
Continuing Example~\ref{example:21543}, minimal inequalities are
\[\alpha_1+\alpha_2+\alpha_3+\alpha_4=4\]
\[\alpha_1\leq 3,\ \alpha_2\leq2,\ \alpha_3\leq 2,\ \alpha_4\leq 1\]
\[\alpha_1+\alpha_2\leq 4,\  \alpha_1+\alpha_3\leq 4,\  \alpha_2+\alpha_3\leq 3,\]
\[\alpha_2+\alpha_3+\alpha_4\leq 3,\]
combined with positivity. This minimization is obtained using reductions (\ref{eqn:redux1}) and (\ref{eqn:redux2}).
\end{Example}

\begin{Example}
If $w=23154$ then using the reductions (\ref{eqn:redux1}) and (\ref{eqn:redux2}) leaves:
\[\alpha_1+\alpha_2+\alpha_3+\alpha_4=3, \alpha_3+\alpha_4\leq 1,
\alpha_1+\alpha_3+\alpha_4\leq 2, \alpha_2+\alpha_3+\alpha_4\leq 2.\]
However, $\alpha_3+\alpha_4\leq 1$ is actually not necessary.\qed
\end{Example}

Given a polytope $P$, recall its \emph{Ehrhart polynomial}, denoted $L_P(t)$, 
is the polynomial such that for $t \in \mathbb{Z}_{\geq 1}$, $L_P(t)$ 
equals the number of lattice points in the polytope $tP$.  Ehrhart \cite{Ehrhart} showed that for a polytope of dimension $d$ in $\mathbb{R}^n$, $L_P(t)$ is in fact a polynomial of degree $d$. For more see, e.g., \cite{BR07}. 

\begin{Conjecture}
\label{conj:ehrhart}
If $L_{\mathcal{N}(\mathcal{S}_{\sf D})}(t) = c_dt^d + \cdots + c_0$, then $c_i > 0$ for $i = 0,\ldots,d.$ 
\end{Conjecture}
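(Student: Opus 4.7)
The plan is to interpret ${\mathcal S}_{\sf D}$ as a generalized permutahedron and then attack its Ehrhart polynomial through Postnikov's machinery, placing Conjecture~\ref{conj:ehrhart} squarely inside the framework of~\cite{Castillo.Liu}. The first step is to verify that the set function $\theta_{\sf D}\colon 2^{[n]}\to {\mathbb Z}_{\geq 0}$ is \emph{submodular}. This ought to follow from a column-by-column argument: for each fixed $c$, the count $\theta_{\sf D}^c(S)$ of paired $(\,)$'s plus $\star$'s is the rank function of a matroid on $[n]$ built from the bracket sequence, and adding a new row index to $S$ either completes a match (incrementing the count) or contributes an unmatched ``('' with a gain that depends monotonically and submodularly on $S$. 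Since sums of submodular functions remain submodular, $\theta_{\sf D}$ itself is then submodular, and ${\mathcal S}_{\sf D}$ is the base polytope of a polymatroid in Edmonds's sense, hence a generalized permutahedron.

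Next, I would try to exploit Postnikov's signed Minkowski expansion into coordinate simplices $\Delta_I={\rm conv}\{e_i:i\in I\}$:
\[
{\mathcal S}_{\sf D}=\sum_{I\subseteq [n]} y_I^{\sf D}\,\Delta_I,
\]
and hope to show that for Rothe-diagram (or skyline) $\sf D$ the coefficients $y_I^{\sf D}$ are \emph{nonnegative}. The decomposition $\theta_{\sf D}=\sum_c \theta_{\sf D}^c$ suggests building ${\mathcal S}_{\sf D}$ as a Minkowski sum of one-column Schubitopes and first resolving the single-column case, where the bracket matroid is concrete enough that $y_I^{\sf D}$ can be computed explicitly by M\"obius inversion. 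If nonnegativity holds, Postnikov's formula
\[
L_{{\mathcal S}_{\sf D}}(t)=\sum_{k=0}^{d}\frac{t^k}{k!}\sum_{(I_1,\ldots,I_k)} y_{I_1}^{\sf D}\cdots y_{I_k}^{\sf D}\,A(I_1,\ldots,I_k),
\]
with $A(\cdots)$ the (nonnegative) mixed Eulerian numbers, expresses each Ehrhart coefficient as a sum of nonnegative terms, yielding the conjecture immediately.

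The principal obstacle is that Postnikov's $y_I$-coefficients are given by a signed inclusion--exclusion and are \emph{not} in general nonnegative, even for very natural generalized permutahedra; proving nonnegativity for Schubitopes would amount to an explicit hypersimplicial/matroid decomposition that is not known even in simpler cases, and this is precisely why the sibling conjecture of De Loera--Haws--K\"oppe for matroid polytopes remains open. As a backup, I would bypass Minkowski decomposition and look for a direct combinatorial model of $L_{{\mathcal S}_{\sf D}}(t)$: granting Conjecture~\ref{conj:main1}, the lattice points of $t\cdot{\mathcal S}_{{\sf D}_w}$ are in bijection with monomials of a ``$t$-scaled'' Schubert-type polynomial, and one might parametrize them via Kohnert moves or pipe dreams to read off each $c_i$ as a count of combinatorial objects, thereby forcing positivity. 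Given the depth of Ehrhart positivity in general, I expect realistic progress only for restricted classes first---dominant, Grassmannian, or vexillary $w$---where ${\mathfrak S}_w$ factors and ${\mathcal S}_{{\sf D}_w}$ should reduce to a product of hypersimplices or order polytopes whose Ehrhart positivity is classical.
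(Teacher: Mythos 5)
The paper does not prove this statement. It is stated as Conjecture~\ref{conj:ehrhart} and supported only by exhaustive computation for $n=4$ and random checks for $n=5$, with a pointer to~\cite{Castillo.Liu}; there is no argument in the paper against which to compare yours.

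Your proposal is a research outline, not a proof, and you correctly identify where it breaks. Two concrete gaps remain. First, the submodularity of $\theta_{\sf D}$ (hence that ${\mathcal S}_{\sf D}$ is a generalized permutahedron in Edmonds's sense) is asserted via an appeal to a ``bracket matroid'' whose rank function is never exhibited. Each $\theta_{\sf D}^c$ is indeed nondecreasing with unit increase: adjoining a row $r$ to $S$ either turns a $)$ into a $\star$ (star count up one, at the cost of at most one lost pairing) or introduces a $($ that completes at most one new match. But the submodular inequality $\theta_{\sf D}^c(S)+\theta_{\sf D}^c(T)\ge\theta_{\sf D}^c(S\cup T)+\theta_{\sf D}^c(S\cap T)$ still has to be verified directly against the bracket-matching combinatorics, and you do not do this. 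Second, and more seriously, Postnikov's coefficients $y_I^{\sf D}$ come from a signed M\"obius inversion and are negative for many natural generalized permutahedra, including matroid polytopes; when they are, the mixed-Eulerian formula is an alternating sum with no manifest positivity, which is precisely why Ehrhart positivity is open even for matroid polytopes and why~\cite{Castillo.Liu} work with Berline--Vergne valuations instead of Minkowski decompositions. Your fallback plan (a direct lattice-point model for dilates $t\cdot{\mathcal S}_{{\sf D}_w}$ via Kohnert diagrams or pipe dreams) is conceptually the more promising direction, and it is also the one more in the spirit of Section~5 of the paper, but you do not construct such a model. So the proposal situates the conjecture correctly within the known machinery and correctly diagnoses why that machinery does not close it, which matches the status of the problem in the paper: open.
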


Conjecture~\ref{conj:ehrhart} also seems true for ${\mathcal S}_{\sf D}$ where ${\sf D}$ is arbitrary. We have exhaustively checked
this for $n=4$ and many random
cases for $n=5$.

Below we give some data about the positive dimensional Schubitopes ${\mathcal S}_{{\sf D}_w}$ for $w\in S_4$:
\begin{center}
\begin{tabular}[t]{c||c|c|c|c}
$w$ & $\mathfrak{S}_w$ & $\dim \mathcal{S}_{{\sf D}_w}$ & vertices of $\mathcal{S}_{{\sf D}_w}$ & $L_{\mathcal{S}_{{\sf D}_w}}(t)$ \\
\hline \hline
$1243$ & $x_1 + x_2 + x_3$ & $2$ & $(1,0,0), (0,1,0), (0,0,1)$ & $ \frac{1}{2} t^2 + \frac{3}{2}t + 1$ \\ \hline
$1324$ & $x_1 + x_2$ & $1$ & $(1,0), (0,1)$ & $t+1$ \\ \hline
$1342$ & $x_1x_2 + x_1x_3 + x_2x_3$ & $2$ & $(1,1,0),(1,0,1),(0,1,1)$ & $ \frac{1}{2} t^2 + \frac{3}{2}t + 1$ \\ \hline
$1423$ &$x_1^2 + x_1x_2 + x_2^2$ &$1$ & $(2,0),(0,2)$ & $2t+1$ \\ \hline
\multirow{ 2}{*}{$1432$}& $x_1^2x_2 + x_1x_2^2 + x_1^2x_3$ &\multirow{ 2}{*}{$2$} & $(2,0,1),(1,2,0),$ & \multirow{ 2}{*}{$ \frac{3}{2} t^2 + \frac{5}{2}t + 1$}\\ 
& $+x_1x_2x_3 + x_2^2x_3$ & & $(2,1,0),(0,2,1)$ &  \\ \hline
$2143$ & $x_1^2 + x_1x_2 + x_1x_3$ & $2$ & $(2,0,0), (1,1,0), (1,0,1)$ & $ \frac{1}{2} t^2 + \frac{3}{2}t + 1$ \\ \hline
$2413$ & $x_1^2x_2 + x_1x_2^2$ & $1$ & $(2,1), (1,2)$ & $t+1$ \\ \hline
$2431$ & $x_1^2x_2x_3 + x_1x_2^2x_3$ & $1$ & $(2,1,1), (1,2,1)$ & $t+1$ \\ \hline
$3142$ & $x_1^2x_2 + x_1^2x_3$ & $1$ & $(2,1,0), (2,0,1)$ & $t+1$ \\ \hline
$4132$ & $x_1^3x_2 + x_1^3x_3$ & $1$ & $(3,1,0), (3,0,1)$ & $t+1$ \\ 
\end{tabular}
\end{center}

\subsection{Relationship of the Schubitope to Kohnert's rule}

A.~Kohnert \cite{Kohnert} conjectured a combinatorial rule for ${\mathfrak S}_w(X)$.
 Starting from ${\sf D}_{w}$
one moves the rightmost box in any row up to the southmost unoccupied square of $n\times n$. Repeating this gives a finite
set of diagrams 
${\sf Koh}(w)=\{D\}$. Define 
\[{\tt Kohwt}(D)=\prod x_i^{\text{$\#$boxes of $D$ in row $i$}}.\] 
Let 
\[{\mathfrak K}_w=\sum_{D\in {\sf Koh}(w)} {\tt Kohwt}(D).\] 
The conjecture is that ${\mathfrak S}_w={\mathfrak K}_w$.
For a proof see work of R.~Winkel \cite{Winkel} and of S.~Assaf \cite{Assaf}. 
With this in hand, one obtains part of 
Conjecture~\ref{conj:main2}, i.e.,

\begin{Proposition}
\label{thm:onecontainment}
${\mathcal S}_{{\sf D}_w}\supseteq {\sf Newton}({\mathfrak K}_w)$.
\end{Proposition}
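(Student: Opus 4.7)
The plan is to reduce the containment to a per-column inequality and then analyze that inequality via the flow-graph of a Kohnert movement. Since $\mathcal{S}_{{\sf D}_w}$ is a convex polytope, it suffices to show that every Kohnert diagram $D \in {\sf Koh}(w)$ has weight vector $\alpha = (\alpha_1,\dots,\alpha_n)$ lying in $\mathcal{S}_{{\sf D}_w}$, where $\alpha_i$ counts the boxes of $D$ in row $i$. Kohnert moves preserve the column of each box and the total cardinality, so $\alpha_i \ge 0$ and $\sum_i \alpha_i = \#{\sf D}_w$ are automatic; the real work is the inequality $\sum_{i\in S}\alpha_i \le \theta_{{\sf D}_w}(S)$ for $S \subseteq [n]$. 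Both sides split into sums over columns, so I would fix a column $c$ and prove $|R'_c \cap S| \le \theta^c_{{\sf D}_w}(S)$, where $R_c$ and $R'_c$ denote the rows of ${\sf D}_w$ and of $D$ occupying column $c$.

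Writing $R_c = \{r_1 < \cdots < r_k\}$ and $R'_c = \{r'_1 < \cdots < r'_k\}$, the fact that boxes only move upward within their column gives $r'_i \le r_i$ for all $i$. The injection $\sigma\colon R_c \to [n]$ defined by $\sigma(r_i) = r'_i$ satisfies $\sigma(r) \le r$, and its flow graph on $[n]$ (an edge $r \to \sigma(r)$ for each $r \in R_c$) decomposes into self-loops at fixed points together with a collection of vertex-disjoint descending paths, each starting in $R_c \setminus R'_c$ and ending in $R'_c \setminus R_c$. A direct bookkeeping on this decomposition yields
\[
|R'_c \cap S| - |R_c \cap S| \;=\; |\mathcal{P}_+| - |\mathcal{P}_-|,
\]
where $\mathcal{P}_+$ (resp.\ $\mathcal{P}_-$) is the set of paths whose end (resp.\ start) lies in $S$ while whose start (resp.\ end) lies outside $S$. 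For each path in $\mathcal{P}_+$, the end is a $($-symbol of ${\tt word}_{c,S}({\sf D}_w)$ (a row in $S$ with no original box) and the start is a higher $)$-symbol (a row outside $S$ with an original box); vertex-disjointness of paths ensures that the resulting endpoint pairs use distinct symbols of the word.

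These $|\mathcal{P}_+|$ pairs constitute a matching of $($'s to $)$'s in which each $($ precedes its mate. A standard fact about parenthesis sequences — that the maximum size of any such matching equals the number of canonically matched pairs, because canonically unmatched symbols can never be paired — then gives $|\mathcal{P}_+| \le \#\text{matched pairs}$. Combined with $|R_c \cap S| = \#\star\text{'s}$ and $|\mathcal{P}_-| \ge 0$, this yields
\[
|R'_c \cap S| \;\le\; (\#\star\text{'s}) + (\#\text{matched pairs}) \;=\; \theta^c_{{\sf D}_w}(S),
\]
and summing over $c$ finishes the proof. The main subtlety — precisely what motivates the path decomposition rather than a naive pairing $i \mapsto (r'_i, r_i)$ for ``type B'' boxes (those with $r_i \notin S$ but $r'_i \in S$) — is that $r'_i$ may coincide with some $r_j$, so the would-be $($ at $r'_i$ is actually a $\star$; the path decomposition transparently absorbs such chains of intermediate $\star$'s, leaving only genuine $($- and $)$-endpoints to be matched.
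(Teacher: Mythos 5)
Your proof is correct and takes essentially the same route as the paper: reduce to the per-column inequality, compare the final Kohnert diagram $D$ to ${\sf D}_w$, and bound the count of boxes of $D$ in rows of $S$ by the number of original $\star$'s plus a maximum $($-before-$)$ matching, which equals the number of canonically paired brackets. Where the paper merely asserts the key estimate $T_{D,S,c}\le T_{{\sf D}_w,S,c}+U_{{\sf D}_w,S,c}$ in one line (``boxes migrate from the south''), your path decomposition of the column-wise order-preserving injection $\sigma$ supplies the missing argument cleanly; the only loose spot is the parenthetical ``canonically unmatched symbols can never be paired,'' which overstates slightly --- an unmatched $($ can be paired with a matched $)$ in a different matching; it is only the pairing of an unmatched $($ with an unmatched $)$ that is impossible, and the underlying fact that the canonical bracket matching has maximum size is indeed standard.
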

\begin{proof}
Consider a diagram $D\in {\sf Koh}(w)$ such that 
${\tt Kohwt}(D)=\alpha$. Each Kohnert move preserves the
number of boxes. Hence 
$\sum_{i=1}^n \alpha_i=\#{\sf D}_{w}$ holds.

Now fix a column $c$ and $S\subseteq [n]$. Compare the positions
of the boxes of $D$ to the boxes of ${\sf D}_{w}$. Let
\[T_{D,S,c}=\#\text{boxes of $D$ in the rows of $S$ and column $c$}.\]
Also, let $U_{D,S,c}$ be the number of pairs $(r,r')$,
\emph{with no coordinate repeated}, such that
\[r\in S, r'\not\in S, r<r', (r,c)\not\in {\sf D}_{w} \text{\ but $(r',c)\in {\sf D}_{w}$.}\]
Since Kohnert moves only bring boxes in from lower rows
into higher rows (i.e., boxes migrate from the south),
\[T_{D,S,c}\leq T_{{\sf D}_{w},S,c}+U_{{\sf D}_w,S,c}.\]
Now it is easy to check that 
\[\theta_{D}^c(S)=T_{{\sf D}_{w},S,c}+U_{{\sf D}_w,S,c}.\]
Since $\alpha_i$ counts the number of boxes in row $i$
of $D$, we have
\[
\sum_{i\in S}\alpha_i  =\sum_c T_{D,S,c}
 \leq \sum_c T_{{\sf D}_{w},S,c}+U_{{\sf D}_w,S,c}
 =\sum_c \theta_{D}(S)  =\theta_D(S),\]
as required.
\end{proof}

\begin{Remark}\label{remark:diffwithkohnert}\emph{
Unlike the computation of each $\theta_D^c(s)$,
the Kohnert moves are not column independent. Perhaps
surprisingly, Conjecture~\ref{conj:main2} says that the \emph{a priori} coarse upper bound on $\sum_{i\in s} \alpha_i$ captures all monomials appearing in the Schubert polynomial.}\qed
\end{Remark}

\begin{Remark}
\label{remark:keytopehalftrue}
\emph{Kohnert's rule extends to key polynomials (with proof). Hence a similar argument (which we omit) 
establishes the ``$\supseteq$'' containment of Conjecture~\ref{conj:keytopeineq}.}\qed
\end{Remark}

Fix a partition $\lambda=(\lambda_1,\lambda_2,\ldots,\lambda_n)$.
Let ${\sf D}_{\lambda}$
be the Young diagram for $\lambda$ (in French notation) placed
flush left in $n\times n$ (hence row $n$ has $\lambda_1$ boxes).

\begin{Proposition}[The Schubitope is a generalized permutahedron]
\label{thm:first}
${\mathcal S}_{{\sf D}_{\lambda}}={\mathcal P}_{\lambda}\subset {\mathbb R}^n$.
\end{Proposition}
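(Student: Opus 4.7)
The strategy is direct: compute $\theta_{{\sf D}_\lambda}(S)$ explicitly and match the resulting inequalities with the classical H-representation of the $\lambda$-permutahedron. The latter is well known to be
\[{\mathcal P}_\lambda=\left\{\alpha\in{\mathbb R}^n:\sum_{i=1}^n\alpha_i=|\lambda|,\ \sum_{i\in S}\alpha_i\le\sum_{j=1}^{|S|}\lambda_j\text{ for all }S\subseteq[n]\right\},\]
where $\alpha_j\ge 0$ is automatic: taking $S=[n]\setminus\{j\}$ together with the equality constraint forces $\alpha_j\ge\lambda_n\ge 0$. Since $\#{\sf D}_\lambda=|\lambda|$, the claim reduces to proving
\[\theta_{{\sf D}_\lambda}(S)=\sum_{j=1}^{|S|}\lambda_j\quad\text{for every }S\subseteq[n].\]

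The plan is to compute $\theta^c_{{\sf D}_\lambda}(S)$ column by column. Because ${\sf D}_\lambda$ is drawn French-style flush-left in $n\times n$, the boxes of column $c$ occupy exactly rows $n-\lambda'_c+1,\ldots,n$ (the bottom $\lambda'_c$ rows of the grid). Hence in ${\tt word}_{c,S}({\sf D}_\lambda)$ every $($ (arising from rows of $S$ with index $\le n-\lambda'_c$) precedes every $)$ and every $\star$, so parenthesis pairing is greedy and contributes $\min(\#(\,,\,\#))$. Writing $s=|S|$ and $a=|S\cap[1,n-\lambda'_c]|$ gives $\#(=a$, $\#\star=s-a$, and $\#)=\lambda'_c-(s-a)$; splitting on whether $s\le\lambda'_c$ or not, the $a$-dependence cancels and one obtains
\[\theta^c_{{\sf D}_\lambda}(S)=\min(|S|,\lambda'_c),\]
which depends only on $|S|$ and not on which particular rows lie in $S$.

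Summing over $c$ and using $\sum_{c}\min(k,\lambda'_c)=\sum_{i=1}^{k}\lambda_i$ (both sides count the cells of $\lambda$ in its top $k$ rows, with the convention $\lambda_i=0$ for $i>\ell(\lambda)$) finishes the argument: $\theta_{{\sf D}_\lambda}(S)=\sum_{i=1}^{|S|}\lambda_i$, and the Schubitope's defining inequalities become exactly those of ${\mathcal P}_\lambda$. The main obstacle is nothing conceptually hard — it is the careful per-column bookkeeping of parens, closes, and stars. The real content is the collapse $\theta^c=\min(|S|,\lambda'_c)$, which strips away the apparent dependence on the individual rows of $S$ and allows Rado's dominance inequalities to be read off verbatim.
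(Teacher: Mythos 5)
Your proof is correct, but it takes the route the paper explicitly declines. The paper proves the proposition by a chain of identifications: $\mathcal{P}_{\lambda}={\sf Newton}(s_{\lambda}(x_1,\ldots,x_n))$ via Proposition~\ref{prop:generalthing}(I), then $s_{\lambda}(x_1,\ldots,x_n)={\mathfrak S}_{w_{\lambda,n}}$ for the Grassmannian permutation $w_{\lambda,n}$, then ${\mathcal S}_{{\sf D}_{w_{\lambda,n}}}={\sf Newton}({\mathfrak S}_{w_{\lambda,n}})$ using Kohnert's (proved) rule for Grassmannian permutations together with the symmetry Lemma~\ref{prop:symmetry}, and finally ${\mathcal S}_{{\sf D}_{\lambda}}={\mathcal S}_{{\sf D}_{w_{\lambda,n}}}$ because the two diagrams differ by a column permutation. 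Your argument instead computes $\theta_{{\sf D}_\lambda}^c(S)$ directly, showing the collapse $\theta^c_{{\sf D}_\lambda}(S)=\min(|S|,\lambda'_c)$ (using that in ${\sf D}_\lambda$ all of column $c$'s boxes occupy the bottom $\lambda'_c$ rows, so all $($'s precede all $)$'s and the greedy pairing gives a clean closed form), then sums via $\sum_c\min(k,\lambda'_c)=\sum_{i=1}^k\lambda_i$ to recover exactly Rado's H-description of $\mathcal{P}_\lambda$. The paper acknowledges this option in its closing remark (``The above result can be also deduced by comparing the inequalities \ldots''), but prefers the Kohnert route because it proves Conjecture~\ref{conj:main1} for Grassmannian permutations along the way and uses machinery (Kohnert moves, the column-permutation invariance) expected to generalize. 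Your approach is the more elementary and self-contained one: it requires no input from Schubert polynomials, Kohnert's theorem, or Schur positivity, only the standard H-representation of the permutahedron and careful per-column bookkeeping. Both are complete proofs; yours is shorter, the paper's buys more. One small presentational point: when invoking ``the classical H-representation of $\mathcal{P}_\lambda$'' you should cite it (Rado's theorem, as in (\ref{eqn:containDominance}), gives it directly), since the paper only records the containment-of-permutahedra formulation rather than the inequality description you use.
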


\begin{Lemma}
\label{prop:symmetry}
If $w(i) < w(i+1)$, $\mathcal{S}_{{\sf D}_{w}}$ is symmetric about $i$ and $i+1$.
That is, 
\[(\alpha_1,\alpha_2,\ldots,\alpha_i,\alpha_{i+1},\ldots,\alpha_n)\in 
{\mathcal S}_{{\sf D}_w} \iff 
(\alpha_1,\alpha_2,\ldots,\alpha_{i+1},\alpha_{i},\ldots,\alpha_n)\in 
{\mathcal S}_{{\sf D}_w}.\] 
\end{Lemma}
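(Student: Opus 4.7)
My plan is to reduce the claim to the equality $\theta_{{\sf D}_w}(S) = \theta_{{\sf D}_w}(s_i(S))$ for every $S \subseteq [n]$, where $s_i(S)$ denotes the subset obtained from $S$ by swapping $i$ and $i+1$. Granted this, the inequality $\sum_{j \in S}\alpha_j \leq \theta_{{\sf D}_w}(S)$ is equivalent to $\sum_{j \in s_i(S)} \alpha_j \leq \theta_{{\sf D}_w}(s_i(S))$, and the defining equality $\sum_j \alpha_j = \#{\sf D}_w$ is manifestly $s_i$-invariant, so $\alpha \in \mathcal{S}_{{\sf D}_w}$ if and only if $s_i(\alpha) \in \mathcal{S}_{{\sf D}_w}$.

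The relevant consequence of $w(i)<w(i+1)$ is the following structural fact: \emph{for every column $c$, $(i,c) \in {\sf D}_w$ implies $(i+1,c) \in {\sf D}_w$}. Indeed, if $w(i)>c$ and $w^{-1}(c)>i$, then $w(i+1)>w(i)>c$; also $w^{-1}(c)\neq i+1$, since otherwise $w(i+1)=c$ contradicts $w(i+1)>c$, so $w^{-1}(c)\geq i+2$, verifying both defining conditions at $(i+1,c)$.

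I will establish $\theta_{{\sf D}_w}^c(S) = \theta_{{\sf D}_w}^c(s_i(S))$ column by column. If $|S\cap\{i,i+1\}|\in\{0,2\}$ then $s_i(S)=S$ and there is nothing to prove, so assume without loss of generality $i\in S$ and $i+1\notin S$. The words ${\tt word}_{c,S}({\sf D}_w)$ and ${\tt word}_{c,s_i(S)}({\sf D}_w)$ agree outside the characters contributed at rows $i$ and $i+1$. The structural fact rules out the configuration in which only $(i,c) \in {\sf D}_w$, leaving three sub-cases. (a) Neither box is present: rows $i, i+1$ contribute a single $($ in each word (from row $i$ under $S$, from row $i+1$ under $s_i(S)$) with the other row contributing nothing, so the two words are literally identical strings. (b) Only $(i+1,c)$ is present: the middle segment is $()$ in one word (self-pairing) and $\star$ in the other, each contributing $+1$ to $\theta^c_{{\sf D}_w}$. (c) Both boxes are present: the middle is $\star)$ in one word and $)\star$ in the other; the $\star$ contributes $+1$ in each, and the lone $)$ pairs (or fails to pair) with the same rightmost unmatched $($ of the prefix in both words.

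The one point requiring care is verifying that in sub-cases (b) and (c) the parenthesis matching outside the two affected rows is unaltered. This holds because $\star$ is inert under greedy parenthesis matching, and the pair $()$ pushes and then immediately pops a single $($; in either word the parser state entering the suffix (rows $>i+1$) is exactly what it would be were the two middle characters deleted, so every parenthesis in the prefix and suffix is matched identically across the two words. Summing $\theta^c_{{\sf D}_w}(S) = \theta^c_{{\sf D}_w}(s_i(S))$ over all columns $c$ yields $\theta_{{\sf D}_w}(S) = \theta_{{\sf D}_w}(s_i(S))$, which by the first paragraph completes the proof.
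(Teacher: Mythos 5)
Your proof takes essentially the same route as the paper: reduce to the per-column equality $\theta_{{\sf D}_w}^c(S)=\theta_{{\sf D}_w}^c(S')$, use $w(i)<w(i+1)$ to rule out $(i,c)\in{\sf D}_w$ with $(i+1,c)\notin{\sf D}_w$, and check the same three remaining cases. You are somewhat more explicit than the paper about verifying the structural fact and about why the matching of parentheses outside rows $i,i+1$ is unchanged, but the argument is the same.
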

\begin{proof}
Suppose $S \subseteq [n]$ such that $i \in S, i+1 \notin S$.  Let $S'$ be the set formed from $S$ by replacing $i$ with $i+1$.  
Then it suffices to show for any column $c$, 
\[\theta_{{\sf D}_w}^c(S) = \theta_{{\sf D}_w}^c(S').\]

Since $w(i) < w(i+1)$, if $(i,c) \in {\sf D}_w$, then $(i+1,c) \in {\sf D}_w$ as well.  
There are three cases:

\noindent
{\sf Case 1:} ($(i,c), (i+1,c) \in {{\sf D}_w}$): in ${\tt word}_{c,S}({\sf D}_w)$, 
rows $i,i+1$ contributes $\star)$ whereas in ${\tt word}_{c,S'}({\sf D}_w)$ the
contribution is $)\star$.  The 
$)$ does not change whether or not it is paired and thus $\theta_{{\sf D}_w}^c(S) = 
\theta_{{\sf D}_w}^c(S')$.

\noindent
{\sf Case 2:} ($(i,c) \notin {\sf D}_w, (i+1,c) \in {\sf D}_w$): in ${\tt 
word}_{c,S}({\sf D}_w)$, rows $i,i+1$ contributes $()$. In ${\tt word}_{c,S'}({\sf 
D}_w)$, the contribution is only `$\star$'.  Both contribute $1$ to $\theta_{{\sf 
D}_w}^c(S)$ and 
$\theta_{{\sf D}_w}^c(S')$ respectively. Hence $\theta_{{\sf D}_w}^c(S) = 
\theta_{{\sf D}_w}^c(S')$.

\noindent
{\sf Case 3:} ($(i,c), (i+1,c) \notin {\sf D}_w$): 
in both ${\tt word}_{c,S}({\sf D}_w)$ for rows $i,i+1$ and 
${\tt word}_{c,S'}({\sf D}_w)$, rows $i$ and $i+1$ contribute
$($.  The $($ does not change whether or not it is paired and so $\theta_{{\sf 
D}_w}^c(S) 
= \theta_{{\sf D}_w}^c(S')$.
\end{proof}

\noindent
\emph{Proof of Proposition~\ref{thm:first}:}
By Proposition~\ref{prop:generalthing}(I),
\begin{equation}
\label{eqn:mar4xyz}
{\sf Newton}(s_{\lambda}(x_1,\ldots,x_n))={\mathcal 
P}_{\lambda}\subseteq {\mathbb R}^n
\end{equation}

Let $w_{\lambda,n}$ be the Grassmannian permutation associated to $\lambda$. This 
permutation only has descent at position $n$. Then \begin{equation} \label{eqn:mar4www} 
{\mathfrak S}_{w_{\lambda,n}}=s_{\lambda}(x_1,\ldots,x_n). \end{equation} We next show 
that \begin{equation} \label{eqn:mar4hhh} {\mathcal S}_{{\sf D}_{w_{\lambda,n}}}={\sf 
Newton}({\mathfrak S}_{w_{\lambda,n}}). \end{equation} The ``$\supseteq$'' containment 
of (\ref{eqn:mar4hhh}) is given by Proposition~\ref{thm:onecontainment}. 
In the case at hand, this proposition can be deduced from 
A.~Kohnert's work \cite{Kohnert} 
who proved his conjecture for Grassmannian permutations.
Below we will use that in \emph{loc cit.}, A.~Kohnert  proved the Grassmannian case by 
giving a weight-preserving bijection 
$\phi:{\sf SSYT}(\lambda,[n])\to {\tt Koh}(w_{\lambda,n})$, 
where
${\sf SSYT}(\lambda,[n])$ 
is the set of semistandard tableaux of shape
$\lambda$ with fillings using $1,2,\ldots,n$.

We now obtain the other containment of (\ref{eqn:mar4hhh}).
Let 
$(\alpha_1,\alpha_2,\ldots,\alpha_n)\in {\mathcal S}_{{\sf D}_{w_{\lambda,n}}}$. 
In fact, ${\sf D}_{w_{\lambda,n}}$ 
differs from ${\sf D}_{\lambda}$ by removing empty 
columns and left justifying. Hence
it is clear from the definition of 
${\theta}_{{\sf D}_{\lambda}}(S)$ that
\begin{equation}
\label{eqn:mar4theineqabc}
\sum_{i=1}^t \alpha_i \leq \sum_{i=1}^t \lambda_i \text{\ for $t=1,\ldots,n$.}
\end{equation}

Lemma~\ref{prop:symmetry} implies that ${\sf D}_{w_{\lambda,n}}$ has an 
$S_n$-action by permutation of the coordinates. Hence if $\beta=\lambda(\alpha)$ is the
decreasing rearrangement of $\alpha$, then $\beta$ also satisfies 
(\ref{eqn:mar4theineqabc}), where $\beta$ replaces $\alpha$. That is,
$\beta\leq_D\lambda$. 

Therefore by (\ref{eqn:kostkadominance}), $K_{\lambda,\beta}\neq 0$ and there exists a 
semistandard
tableau of shape $\lambda$ and content $\beta$. By symmetry of 
$s_{\lambda}(x_1,\ldots,x_n)$ (and the fact it is the weight-generating series for
${\sf SSYT}(\lambda,[n])$), there is a semistandard
tableau $U$ of shape $\lambda$ and content $\alpha$. 

Now apply Kohnert's bijection $\phi$ to contain $D\in {\tt Koh}(w_{\lambda,n})$ with 
${\tt Kohwt}(D)=\alpha$, as desired. This completes the proof of (\ref{eqn:mar4hhh}).

Since ${\sf D}_w$ and ${\sf D}_{\lambda}$ only differ by a column permutation 
${\mathcal S}_{{\sf D}_{\lambda}}= {\mathcal S}_{{\sf D}_{w_{\lambda,n}}}$.
Now combine this with (\ref{eqn:mar4hhh}), (\ref{eqn:mar4www}) and (\ref{eqn:mar4xyz}).
\qed

The above result can be also deduced by comparing the inequalities of ${\mathcal S}_{{\sf D}_{\lambda}}$ with those for
${\mathcal P}_{\lambda}$. However, the above argument has elements that might apply more generally.

\section*{Acknowledgements}
We thank Alexander Barvinok, Laura Escobar, Sergey Fomin, Allen Knutson, Melinda Lanius, Fu Liu, Mark Shimozono, John Stembridge,
Sue Tolman and Anna
Weigandt for very helpful conversations. 
We thank Bruce Reznick specifically for his example of $f=x_1^2+x_2 x_3+\cdots$ we used in the introduction. AY was supported by an NSF grant.
CM and NT were supported by UIUC Campus Research Board Grants. We made significant use 
of SAGE during our investigations.

\end{document}